\numberwithin{equation}{section}
\renewcommand{\arraystretch}{1.2}
\let\originalleft\left
\let\originalright\right
\renewcommand{\left}{\mathopen{}\mathclose\bgroup\originalleft}
\renewcommand{\right}{\aftergroup\egroup\originalright}
\newtheorem{lemma}{Lemma}[section]
\newaliascnt{proposition}{lemma}
\newtheorem{proposition}[proposition]{Proposition}
\newaliascnt{thm}{lemma}
\newtheorem{theorem}[thm]{Theorem}
\newaliascnt{corollary}{lemma}
\newtheorem{corollary}[corollary]{Corollary}
\newaliascnt{definition}{lemma}
\newtheorem{definition}[definition]{Definition}
\newaliascnt{claim}{lemma}
\newaliascnt{example}{lemma}
\newtheorem{example}[example]{Example}
\newaliascnt{remark}{lemma}
\newtheorem{remark}[remark]{Remark}
\newaliascnt{conjecture}{lemma}
\newtheorem*{conjecture}[conjecture]{Conjecture}
\newtheorem{maintheorem}{Main Theorem}
\def\theorem@checkbold{}
\theoremstyle{nonumberplain}
\newtheorem{proof}{Proof}
\newenvironment{propositionlist}{\begin{compactenum}[\itshape i.)]}{\end{compactenum}}
\newenvironment{definitionlist}{\begin{compactenum}[\itshape i.)]}{\end{compactenum}}
\newcommand{\cc}[1]          {\overline{{#1}}}
\newcommand{\D}              {\mathop{}\!\mathrm{d}}
\newcommand{\del}{\mathop{}\!\partial}
\newcommand{\lie}[1]          {\mathfrak{#1}}
\DeclareMathOperator{\Ad}     {\mathrm{Ad}}
\newcommand{\Fun}[1][k]      {\mathscr{C}^{#1}}
\newcommand{\Cinfty}         {\Fun[\infty]}
\newcommand{\Sec}[1][k]      {\Gamma^{#1}}
\newcommand{\Secinfty}       {\Sec[\infty]}
\newcommand{\at}[1]          {\big|_{#1}}
\newcommand{\pr}             {\mathrm{pr}}
\newcommand{\Lie}   {\mathscr{L}}
\DeclarePairedDelimiter{\abs}{\lvert}{\rvert}
\newcommand{\tensor}[1][{}]           {\mathbin{\otimes_{\scriptscriptstyle{#1}}}}
\newcommand{\Anti}                    {\Lambda}
\newcommand{\Sym}                     {\mathrm{S}}
\newcommand{\poly}        {\mathrm{poly}}
\newcommand{\Der}   {\mathrm{Der}}
\newcommand{\algebra}[1] {\mathscr{#1}}
\DeclareMathOperator{\id}    {\mathsf{id}}
\newcommand{\argument}       {\,\cdot\,}
\newcommand{\red}{\mathrm{red}} 
\newcommand{\comm}{\mathrm{Com}}
\newcommand{\cocom}{\mathrm{coCom}}
\newcommand{\ucocom}{\mathrm{ucoCom}}
\newcommand{\flie}{\mathrm{Lie}}
\newcommand{\colie}{\mathrm{coLie}}
\newcommand{\gerst}{\mathrm{Gerst}}
\newcommand{\multivect} {\mathscr{V}}
\newcommand{\sh}             {\mathrm{sh}}
\newcommand{\End}            {\operatorname{\mathrm{End}}}
\newcommand{\acts}            {\mathbin{\triangleright}}
\title{Quantization of the Momentum Map via $\frak{g}$-adapted Formalities}
\date{}
\author{
	\textbf{Chiara Esposito}\thanks{\texttt{chesposito@unisa.it}},\\[0.3cm]
	Dipartimento di Matematica\\
	Università degli Studi di Salerno\\
	Italy \\[0.5cm]
	\textbf{Ryszard Nest}\thanks{\texttt{rnest@math.ku.dk}},\\[0.3cm]
	Department of Mathematical Sciences\\
	University of Copenhagen\\
	Denmark\\[0.5cm]
	\textbf{Jonas Schnitzer}\thanks{\texttt{jonaschristoph.schnitzer@unipv.it }},\\[0.3cm]
	Dipartimento di Matematica ``Felice Casorati''\\
	Università degli Studi di Pavia\\
	Italy\\[0.5cm]
	\textbf{Boris Tsygan}\thanks{\texttt{b-tsygan@northwestern.edu}},\\[0.3cm]
	Department of Mathematics\\
	Northwestern University\\
	USA
}
\begin{document}
	
\maketitle

\begin{abstract}
	In this note, we provide a proof of the existence and complete classification of $G$-invariant star products with quantum momentum maps
	on Poisson manifolds by means of an equivariant version of the formality theorem. 
\end{abstract}

\newpage

\tableofcontents

\newpage

%
%
\section{Introduction}
\label{sec:Introduction}

This paper provides a proof of the equivariant version of the formality
theorem, that implies the existence and classification of quantum momentum maps.  The topic of the quantization of the momentum map has been widely studied in various settings,
e.g. \cite{bieliavsky:2020,fedosov:1998,gutt:2003,hamachi:2002,lu:1993,muller:2004,reichert:2015}.
Our motivation comes from formal deformation quantization. Deformation
quantization was introduced by Bayen, Flato, Fronsdal,
Lichnerowicz and Sternheimer in \cite{bayen.et.al:1977a, bayen.et.al:1978a} and it relies
on the idea that the quantization of a phase space described by a
Poisson manifold $M$ is described by a formal deformation, so-called
\emph{star product}, on the commutative algebra of smooth
complex-valued functions $\Cinfty(M)$ in a formal parameter $\hbar$.
The existence and classification of star products on Poisson manifolds
has been provided by Kontsevich's formality theorem
\cite{kontsevich:2003a} and extended to the invariant setting of Lie group
actions in \cite{dolgushev:2005a,dolgushev:2005}. More explicitly, the
formality theorem provides an $L_\infty$-quasi-isomorphism between the
differential graded Lie algebras (DGLA) of polyvector fields and the
polydifferential operators resp. the invariant versions.  As such, it
maps in particular Maurer--Cartan elements in the DGLA of polyvector
fields, i.e. (formal) Poisson structures, to Maurer-Cartan elements in
the DGLA of polydifferential operators, which correspond to star
products.

One open question and our main motivation is to investigate the
compatibility of deformation quantization and phase space reduction in
the Poisson setting.
At the classical level conserved quantities described via the
momentum map lead to phase space reduction which constructs
from the high-dimensional original phase space one of a
smaller dimension. Motivated by the significance of the
classical situation, it is highly desirable to find an
analogue in quantum physics.  

In the classical setting one considers here the Marsden-Weinstein reduction 
\cite{marsden.weinstein:1974a}. Suppose that a Lie group 
$G$ acts by Poisson diffeomorphisms on the Poisson manifold $M$ 
and that it allows an $\Ad^*$-equivariant momentum map 
$J\colon M\longrightarrow \lie g^*$ with $0\in \lie g^*$
as regular value, where $\lie g$ is the Lie algebra of
$G$. Then $C = J^{-1}(\{0\})$ is a closed embedded submanifold of
$M$ and the reduced manifold $M_\red = C/G$ is again a Poisson 
manifold under suitable assumptions. Reduction theory is very important
and it is still a very active field of research. Among the others, we mention the categorical reformulation performed in \cite{dippell:2019a}.
In the setting of deformation quantization a quantum reduction scheme
has been introduced in \cite{bordemann.herbig.waldmann:2000a},
see also \cite{gutt.waldmann:2010a} for a slightly different formulation,
which allows to study the compatibility between the reduction scheme 
and the properties of the star product, as in \cite{esposito:2019a}.
Given a Lie group 
$G$ acting on a manifold $M$, one can consider \emph{equivariant star products} $(\star,H)$, i.e. 
pairs consisting of an invariant star product $\star$ and a 
quantum momentum map $H= \sum_{r=0}^\infty \hbar^r J_r \colon \lie g \longrightarrow \Cinfty(M)[[\hbar]]$, where $\lie g$ is the Lie algebra of
$G$. In this case, $J_0$ is a classical momentum map for the Poisson structure induced by $\star$. 
In this setting, there is a well-known BRST-like reduction procedure \cite{bordemann.herbig.waldmann:2000a} of equivariant star products on $M$ to star products on $M_\red$.
Nevertheless, the existence and classification of equivariant star products have remained an open question for many years.

Recently, both classical and quantum reduction schemes have been obtained for polydifferential
operators and polyvector fields phrased in terms of
$L_\infty$-morphisms 
\cite{esposito:2022,esposito:2023}. 
Classical and quantum momentum
maps can be encoded by the complexes of equivariant
multivector fields and multidifferential operators,
resp.   To prove existence and classification of equivariant
star products the idea  consists of
finding an $L_\infty$-morphism between the equivariant
complexes, that sends a classical into quantum momentum
map. More precisely, as we can interpret the classical and
quantum momentum map as Maurer--Cartan elements of these
$L_\infty$-algebras, the existence of an $L_\infty$-morphism
automatically sends classical into quantum momentum maps.
However, it turns out that the equivariant complexes are not
DGLA's but they are \emph{curved Lie algebras}, see
\cite{esposito:2022,esposito:2023,kraft:2024}, where the
curvature is given by the infinitesimal generators of the
action. Since we do not have a differential, we do not have a
notion of quasi-isomorphism of curved Lie algebras.  A good
replacement for quasi-isomorphisms in this picture are
homotopy equivalences of curved Lie algebras, which boils down to 
quasi-isomorphisms if the curvature vanishes. Moreover, homotopy equivalent
morphisms induce the same maps on the corresponding
deformation spaces.
The
compatibility with the curvature turns out to be crucial. In facts, we prove that an $L_\infty$-morphism between
$L_\infty$-algebras can be lifted to a curved
$L_\infty$-morphism if and only if it is compatible with the
curvature. Due to its importance we turn this condition into a
definition and introduce the so-called
\emph{$\mathfrak{g}$-adapted $L_\infty$-algebras}.

The strategy to construct a curved $L_\infty$ morphism consists of the following steps: first
we construct a flat
$L_\infty$-morphism $F\colon T_\poly(M)\to D_\poly$ by means of a refinement of a diagram from \cite{dolgushevetal:2007} of the form
\begin{center}
\begin{tikzcd}
T_\poly (M) & \bullet\arrow[l] & \bullet\arrow[l]\arrow[r] & \bullet & D_\poly(M)\arrow[l],
\end{tikzcd}
\end{center}
which is a zig-zag of
$L_\infty$-quasi-isomorphisms. 
It is important to remark that we also get a concrete quasi-isomorphism. As a second step we prove that the resulting curved morphism is also $\lie{g}$-adapted. 
More precisely, we obtain the following 
\begin{maintheorem}
	Under suitable 
	assumptions on $G$ and $M$, there exists a $\lie{g}$-adapted
    $L_\infty$-morphism
	\begin{align*}
		F\colon T_\poly(M)\to D_\poly(M),
	\end{align*}
	such that the
	first Taylor component $F_1$ coincides with the Hochschild-Kostant-Rosenberg map $\mathrm{hkr}$.
\end{maintheorem}
As a second step we show that this condition is enough to lift the$L_\infty$- morphism $F$ to a $L_\infty$-morphism
between equivariant multivector fields and equivariant multidifferential operators, which immediately
implies our second main achievement.
\begin{maintheorem}
	Under the same assumptions of the above theorem, there is a one-to-one correspondence
    between equivalence classes of equivariant star
    products and equivalence
    classes of $G$-invariant formal Poisson structures
    with formal momentum maps.
\end{maintheorem}
The paper is organized as follows: Section~\ref{sec:Preliminaries} collects all the basic notions of $L_\infty$-algebras and, in
particular, curved Lie algebras, as well as some technical tools from homological algebra. In Section~\ref{sec:zigzag} 
we show how to construct a $G_\infty$-quasi isomorphism connecting the Hochschild complex and 
the DGLA of multivector fields on a generic algebra. Section~\ref{sec:gadapted} contains the definitions of the equivariant complexes
and their interpretation to their Maurer–Cartan elements. We discuss a characterization of curved
$L_\infty$-morphism and we introduce the notion of $\lie g$-adapted $L_\infty$-algebras. This allows us to prove under
which condition, given a $\lie g$-adapted $L_\infty$-quis, its quasi-inverse is again $\lie g$-adapted. Finally, in Section~\ref{sec:formality}
we construct the $\lie g$-adapted quasi-inverse we look for.

\section*{Acknowledgments}
The authors are grateful to Alexander
Berglund for the helpful comments. Moreover, we are thankful to Andreas 
Kraft who contributed ideas to the bigger context of this work at earlier stages. 
C.E. was supported by the
National Group for Algebraic and Geometric Structures, and
their Applications (GNSAGA -- INdAM). 

This paper was largely developed in the inspiring environment of the MFO Institute, 
made possible by the Oberwolfach Research Fellows program. 
We are deeply grateful to the local staff, as well as to the many researchers
we encountered during our stay, 
with special thanks to Cosima, Daniele, Jitendra, Pedro, Rosa, and Severin.
And then, there were the alpacas -- majestic and surprisingly wise creatures 
whose calming aura and unwavering tranquility sparked our creativity 
with their comforting presence in ways we never expected.

%
%
\section{Preliminaries}
\label{sec:Preliminaries}
In this section we recall some basic notions, which are crucial to all further discussions, 
to fix notations and conventions.

\subsection{Homological Perturbation}
\label{sec:HomologicalPerturbation}
We first recall the classical homological perturbation lemma, see e.g. \cite{crainic:2004a:pre, huebschmann:1991}.  
Note that we have chosen different sign conventions.
\begin{definition}[Homotopy retract]
	Let $(C^\bullet,\D_C)$ and $(D^\bullet,\D_D)$ two cochain complexes. We call homotopy retract
	a diagram of the form:
	\begin{equation}
		\label{eq:homretract}
		\begin{tikzcd}
			(C^\bullet,\D_C)
			\arrow[r," i  ", shift left = 3pt]
			&(D^\bullet,\D_D)
			\arrow[l," p ", shift left = 3pt]
			\arrow[loop,
			out = -30,
			in = 30,
			distance = 30pt,
			start anchor = {[yshift = -7pt]east},
			end anchor = {[yshift = 7pt]east},
			" h  "{swap}
			]
		\end{tikzcd},
	\end{equation}
	where $i$ and $p$ are cochain maps and $h$ is a degree $-1$ map, such that  $\id-ip=\D_Dh+h\D_D$.
\end{definition}
When there is no ambiguity about the differentials, we refer to the retract simply by $(i, p, h)$.
\begin{definition}[Perturbation]
	\label{def:Perturbation}%
	Let $(i, p, h)$ be a homotopy retract.
	\begin{definitionlist}
		\item \label{item:Perturbation} A $ R $-linear map
		$ b   \colon D^\bullet \to D^{\bullet+1}$ such that
		$(\D_D +  b  )^2 = 0$ is said a \emph{perturbation} of
		$( i  , p , h  )$.
		\item \label{item:SmallPerturbation} A perturbation $ b  $ is
		called \emph{small} if $\id +  b    h  $ is invertible.
		\item \label{item:LocallyNilpotentPerturbation} A perturbation
		$ b  $ is called \emph{locally nilpotent} if for every
		$a \in D^\bullet$ there exists $n \in \mathbb{N}_0$ such that
		$( b    h  )^n(a) = 0$.
	\end{definitionlist}
\end{definition}
\begin{equation}
	\label{eq:LocallyNilpotentInverse}
	(\id +  b   h  )^{-1}
	=
	\sum_{n=0}^{\infty} (-1)^n ( b   h  )^n.
\end{equation}
The homological perturbation lemma proves that given a homotopy retract 
together with a small perturbation, one can immediately obtain
a new retract (see \cite{crainic:2004a:pre}). More precisely, we have the following
proposition.
\begin{proposition}[Homological perturbation]
	\label{prop:HomologicalPerturbation}%
	Let $(i, p, h)$ be a homotopy retract given by the diagram \eqref{eq:homretract} and let $b$
	be a small perturbation. Then the perturbed diagram
	\begin{equation}
		\label{eq:perturbed}
		\begin{tikzcd}
			(C^\bullet, \mathrm{D}  _C)
			\arrow[r," I  ", shift left = 3pt]
			&(D^\bullet,\D_D +  b  )
			\arrow[l," P ", shift left = 3pt]
			\arrow[loop,
			out = -30,
			in = 30,
			distance = 30pt,
			start anchor = {[yshift = -7pt]east},
			end anchor = {[yshift = 7pt]east},
			" H  "{swap}
			]
		\end{tikzcd}
	\end{equation}
	is again a homotopy retract with
	\begin{alignat}{2}
		\mathrm{D}  _C &\coloneqq
		\D_C
		+  p (\id +  b   h  )^{-1} b    i  ,
		\qquad\qquad
		& I   &\coloneqq
		(\id +  h    b  )^{-1} i  ,
		\\
		H   &\coloneqq
		(\id +  h   b  )^{-1} h  ,
		& P  &\coloneqq
		p (\id +  b   h  )^{-1}.
	\end{alignat}
\end{proposition}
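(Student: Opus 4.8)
The plan is to verify directly that the four perturbed maps $I$, $P$, $H$, $\mathrm{D}_C$ satisfy the defining equations of a homotopy retract for the complex $(D^\bullet, \D_D + b)$, namely that $I$ and $P$ are cochain maps, $H$ has degree $-1$, and the chain homotopy identity $\id - IP = (\D_D + b)H + H(\D_D + b)$ holds. First I would record the basic algebraic consequences of $(i,p,h)$ being a retract: besides $\id - ip = \D_D h + h \D_D$, one typically arranges (or may assume, possibly after a standard modification) the \emph{side conditions} $h^2 = 0$, $hi = 0$, $ph = 0$; I would note which of these are actually needed and use them to simplify. The smallness of $b$ guarantees that $(\id + bh)^{-1}$ exists, and the elementary identity $(\id + bh)^{-1} b = b(\id + hb)^{-1}$ (proved by multiplying both sides appropriately) lets one move the resolvent past $b$, which is the computational workhorse throughout.

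Next I would check the easy structural points: $H = (\id + hb)^{-1} h$ is a composite of a degree $0$ map with the degree $-1$ map $h$, hence degree $-1$; similarly $I$ raises degree by $0$ and $P$ by $0$. Then I would verify $I$ is a cochain map, i.e. $(\D_D + b) I = I \mathrm{D}_C$: expand both sides using $\D_C i = i \D_D$ (that $i$ was a cochain map for the unperturbed differential), the homotopy identity, and the resolvent-shift identity, collecting terms so that the geometric series telescopes. The computation for $P$ being a cochain map, $P(\D_D + b) = \mathrm{D}_C P$, is formally dual and I would present it by the same manipulation. Finally I would establish the chain homotopy relation for $H$: starting from $(\D_D + b)H + H(\D_D + b)$, substitute the definitions, use $\D_D h + h \D_D = \id - ip$, push the resolvent across $b$ where needed, and show the result equals $\id - IP$ with $IP = (\id + hb)^{-1} i \, p \, (\id + bh)^{-1}$; one must check this last expression matches the product of the given formulas for $I$ and $P$, again via the shift identity.

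The main obstacle is purely bookkeeping: keeping the infinite geometric series $(\id + bh)^{-1} = \sum (-1)^n (bh)^n$ under control while commuting it past $b$, $h$, $i$, $p$, and making sure every cancellation that should telescope actually does. The one genuine subtlety worth flagging is whether the side conditions $h^2 = 0$, $hi = 0$, $ph = 0$ are available; if the hypothesis only supplies the bare homotopy identity, I would first invoke the standard trick of replacing $h$ by $h' = (\D_D h + h \D_D) h (\D_D h + h \D_D) = (\id - ip) h (\id - ip)$ to enforce them without changing the retract data up to the needed identities, and only then run the perturbation computation; since the excerpt's Proposition~\ref{prop:HomologicalPerturbation} is the classical statement, I expect the intended proof simply cites \cite{crainic:2004a:pre, huebschmann:1991} for these normalizations and then performs the series manipulation above, so I would follow that route and present the key identities explicitly while leaving the fully expanded term-by-term matching to the reader.
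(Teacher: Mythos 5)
Your plan is the standard direct verification, and it matches the paper, which states this classical lemma without proof and simply cites \cite{crainic:2004a:pre, huebschmann:1991}; the resolvent-shift identity $(\id+bh)^{-1}b=b(\id+hb)^{-1}$ (equivalently $h(\id+bh)^{-1}=(\id+hb)^{-1}h$) together with $\id-ip=\D_Dh+h\D_D$ and $(\D_D+b)^2=0$ is indeed all that is needed to make the computation close. One caveat: the side conditions $h^2=0$, $hi=0$, $ph=0$ are \emph{not} needed for this proposition (they only enter in Corollary~\ref{cor:SpecialDeformationRetract}), and you should not invoke the normalization $h\mapsto(\id-ip)h(\id-ip)$ here, since replacing $h$ changes the maps $I$, $P$, $H$, $\mathrm{D}_C$ and you would then be verifying different formulas from the ones asserted. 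The homotopy identity for the stated $h$ follows directly: writing $A=(\id+hb)^{-1}$ and $B=(\id+bh)^{-1}$, one shows $(\D_D+b)A=A\D_D+Aipb A$, whence $(\D_D+b)Ah+Ah(\D_D+b)=\id-Aip+AipbhB=\id-AipB=\id-IP$, using $bhB=\id-B$. Also remember to record that $\mathrm{D}_C^2=0$, which is implicitly part of the claim that $(C^\bullet,\mathrm{D}_C)$ is a complex.
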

\begin{corollary}[Morphism of perturbed retracts]
	\label{cor:MorphismOfPerturbations}%
	Let $\Phi \colon D^\bullet \to D'^\bullet$ be a morphism of
	homotopy retracts $( i  , p , h  )$ and
	$( i', p ', h  ')$.  Moreover, let $ b  $ be a small
	perturbation of $( i  , p , h  )$ and let $ b  '$ be small
	perturbation of $( i  ', p ', h  ')$.  If $\Phi$ satisfies
	$ b  '\Phi = \Phi  b  $, then $\Phi$ is a morphism between the
	perturbed homotopy retracts, too.
\end{corollary}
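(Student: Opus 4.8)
The plan is to unwind what it means for $\Phi$ to be a morphism of homotopy retracts and then to transport the intertwining relation $b'\Phi = \Phi b$ through the explicit formulas of Proposition~\ref{prop:HomologicalPerturbation}. I take a morphism of homotopy retracts to be a cochain map $\Phi\colon D^\bullet\to D'^\bullet$ satisfying $\Phi i = i'$, $p'\Phi = p$ and $\Phi h = h'\Phi$; the goal is to check that the same three equations, with primes in the perturbed positions, hold for $(I,P,H)$ and $(I',P',H')$, and that $\Phi$ intertwines $\D_D+b$ with $\D_{D'}+b'$.

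The key preliminary step is to upgrade the hypothesis $b'\Phi = \Phi b$ to statements about the operators $\id+bh$, $\id+hb$ and their primed counterparts. From $b'\Phi=\Phi b$ and $\Phi h = h'\Phi$ one gets $b'h'\Phi = b'\Phi h = \Phi bh$ and, symmetrically, $h'b'\Phi = \Phi hb$, i.e.
\[
(\id+b'h')\,\Phi = \Phi\,(\id+bh), \qquad (\id+h'b')\,\Phi = \Phi\,(\id+hb).
\]
Since $b$ and $b'$ are small, $\id+bh$ and $\id+b'h'$ are invertible, and hence so are $\id+hb$ and $\id+h'b'$ (via the standard identity $(\id+hb)^{-1}=\id-h(\id+bh)^{-1}b$); multiplying the two displayed equalities on the appropriate sides by these inverses yields
\[
\Phi\,(\id+bh)^{-1} = (\id+b'h')^{-1}\,\Phi, \qquad \Phi\,(\id+hb)^{-1} = (\id+h'b')^{-1}\,\Phi.
\]

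With these two intertwining identities everything else is a one-line substitution into the formulas of Proposition~\ref{prop:HomologicalPerturbation}: $\Phi$ is a cochain map for the perturbed differentials because $(\D_{D'}+b')\Phi=\Phi\D_D+\Phi b=\Phi(\D_D+b)$; then $\Phi I=\Phi(\id+hb)^{-1}i=(\id+h'b')^{-1}\Phi i=(\id+h'b')^{-1}i'=I'$; likewise $P'\Phi=p'(\id+b'h')^{-1}\Phi=p'\Phi(\id+bh)^{-1}=p(\id+bh)^{-1}=P$; and $H'\Phi=(\id+h'b')^{-1}h'\Phi=(\id+h'b')^{-1}\Phi h=\Phi(\id+hb)^{-1}h=\Phi H$. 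The same computation also shows that the two perturbed differentials on the small complex coincide, $p'(\id+b'h')^{-1}b'i'=p(\id+bh)^{-1}bi$, so that $\Phi$ genuinely covers the identity on $C^\bullet$. There is no substantial obstacle here: the only point deserving care is the invertibility bookkeeping under the smallness hypothesis, which is precisely what licenses passing from the intertwining of $\id+bh$ with $\id+b'h'$ to the intertwining of their inverses.
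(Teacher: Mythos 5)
Your proof is correct. The paper states this corollary without giving a proof, treating it as an immediate consequence of the explicit formulas in Proposition~\ref{prop:HomologicalPerturbation}; your argument --- upgrading $b'\Phi=\Phi b$ to the intertwining relations $(\id+b'h')\Phi=\Phi(\id+bh)$ and $(\id+h'b')\Phi=\Phi(\id+hb)$, passing to inverses via smallness, and substituting into the formulas for $I,P,H$ and $\mathrm{D}_C$ (including the check that the two induced differentials on $C^\bullet$ coincide) --- is exactly the intended computation.
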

This corollary still holds if we consider morphisms of homotopy
retracts along a ring homomorphism.  
\begin{definition}[Special deformation retract]
	\label{def:SpecialDeformationRetract}%
	Let $(i, p, h)$ be a homotopy retract given by the diagram \eqref{eq:homretract}
	\begin{definitionlist}
		\item \label{item:DeformationRetract} If additionally
		\begin{equation}
			\label{eq:DeformationRetract}
			p  i   = \id
		\end{equation}
		holds, the above homotopy retract is called \emph{deformation
			retract}.
		\item \label{item:SpecialDeformationRetract} A deformation retract
		such that 
		\begin{equation}
			h  ^2 = 0,
			\qquad
			p   h   = 0,
			\quad
			\textrm{ and }
			\quad
			h    i   = 0
		\end{equation}
		hold is called a \emph{special deformation retract}.
	\end{definitionlist}
\end{definition}
Note that Equation~\eqref{eq:DeformationRetract} is equivalent to
\begin{equation}
	\begin{tikzcd}
		(D^\bullet,\D_D)
		\arrow[r," p ", shift left = 3pt]
		&(C^\bullet,\D_C)
		\arrow[l," i  ", shift left = 3pt]
		\arrow[loop,
		out = -30,
		in = 30,
		distance = 30pt,
		start anchor = {[yshift = -7pt]east},
		end anchor = {[yshift = 7pt]east},
		"0"{swap}
		]
	\end{tikzcd}
\end{equation}
being a homotopy retract.
Perturbations of deformation retracts will in general not be deformation retracts.
As an immediate consequence of Prop.~\ref{prop:HomologicalPerturbation}
one can prove that special deformation retracts are preserved under perturbation
\cite{crainic:2004a:pre}.
\begin{corollary}[Perturbation of special deformation retract]
	\label{cor:SpecialDeformationRetract}%
	Let $(i, p, h)$ be a special deformation retract
	together with a locally nilpotent perturbation $ b  $.
	Then the perturbed one \eqref{eq:perturbed}
	 is again a special
	deformation retract.
\end{corollary}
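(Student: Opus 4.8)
The plan is to deduce Corollary~\ref{cor:SpecialDeformationRetract} directly from Proposition~\ref{prop:HomologicalPerturbation} by checking that each of the three extra identities defining a special deformation retract survives the perturbation. First I would observe that a locally nilpotent perturbation is small, with $(\id+bh)^{-1}=\sum_{n\ge 0}(-1)^n(bh)^n$ (and likewise $(\id+hb)^{-1}=\sum_{n\ge 0}(-1)^n(hb)^n$) by Equation~\eqref{eq:LocallyNilpotentInverse}, so Proposition~\ref{prop:HomologicalPerturbation} applies and the perturbed data $(I,P,H)$ is at least a homotopy retract; the point is to upgrade ``homotopy retract'' to ``special deformation retract''.

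The key computations, carried out in order, are the following. (1)~$PI=\id$: using $P=p(\id+bh)^{-1}$ and $I=(\id+hb)^{-1}i$, one pushes $(\id+bh)^{-1}$ past $p$ and uses the intertwining relation $p(bh)=\cdots$; more efficiently, one uses the standard identity $h(\id+bh)^{-1}=(\id+hb)^{-1}h$ together with $ph=0$ and $hi=0$ to collapse the middle factor, leaving $PI=p(\id+bh)^{-1}(\id+hb)^{-1}i$—here I would argue termwise that every summand with at least one $h$ or $b$ either hits $ph=0$ on the left or $hi=0$ on the right (after commuting $h$'s using $h(\id+bh)^{-1}=(\id+hb)^{-1}h$), so only the $n=0,m=0$ term $pi=\id$ survives. (2)~$H^2=0$: since $H=(\id+hb)^{-1}h$, write $H^2=(\id+hb)^{-1}h(\id+hb)^{-1}h$; commute the inner inverse to the left via $h(\id+hb)^{-1}=(\id+hb)^{-1}h$ (which follows from $h^2=0$), getting $H^2=(\id+hb)^{-1}(\id+hb)^{-1}h h=0$ because $h^2=0$. (3)~$PH=0$: $PH=p(\id+bh)^{-1}(\id+hb)^{-1}h$; commuting as before reduces this to $p\cdot(\text{something})\cdot h$ with a leftmost factor $p(hb)^k$ or $p(bh)^k$, and every term has a $ph$ appearing (again after the $h$-commutation), so it vanishes using $ph=0$ and $h^2=0$. (4)~$HI=0$: symmetrically, $HI=(\id+hb)^{-1}h(\id+hb)^{-1}i=(\id+hb)^{-2}hi=0$ using $h^2=0$ and $hi=0$. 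Each step is a termwise manipulation of the geometric series using only $h^2=0$, $ph=0$, $hi=0$ and the elementary commutation $h(\id+bh)^{-1}=(\id+hb)^{-1}h$.

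The main obstacle, such as it is, is purely bookkeeping: making sure the formal manipulations of the infinite series $(\id+bh)^{-1}$ and $(\id+hb)^{-1}$ are legitimate, i.e.\ that local nilpotency guarantees convergence termwise when applied to any element, and that one is allowed to rearrange and commute factors inside these sums. I would handle this by noting that on any fixed $a\in D^\bullet$ only finitely many terms of each series are nonzero, so all identities can be verified after applying both sides to $a$ and expanding, reducing everything to a finite algebraic identity in the ring generated by $h,b$ modulo the relations $h^2=0$ together with the left/right annihilation $ph=0$, $hi=0$. Since this is a known result (the statement cites \cite{crainic:2004a:pre}), the write-up can be kept short, essentially reproducing these four one-line computations and referring to the cited source for the details of the series manipulations.
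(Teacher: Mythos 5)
Your argument is correct and is essentially the standard computation that the paper itself omits, deferring to \cite{crainic:2004a:pre}: local nilpotency gives smallness, Proposition~\ref{prop:HomologicalPerturbation} gives the perturbed homotopy retract, and the four termwise expansions of the geometric series, using only $h^2=0$, $ph=0$, $hi=0$, yield $PI=\id$, $H^2=0$, $PH=0$ and $HI=0$. One small correction: the commutation you invoke in steps (2) and (4), $h(\id+hb)^{-1}=(\id+hb)^{-1}h$, is false in general (it would require $hbh=0$); what you actually need there is the simpler identity $h(\id+hb)^{-1}=h$, which follows from $h^2=0$ and makes those two steps one-liners, while the correct commutation $h(\id+bh)^{-1}=(\id+hb)^{-1}h$ that you state in your final paragraph is the one relevant to steps (1) and (3).
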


\subsection{$L_\infty$-algebras}
$L_\infty$-algebras were first introduced in \cite{lada.markl:1994a} and 
\cite{lada.stasheff:1993a} and play a prominent role in deformation theory.
In this section we recall the notions of $L_\infty$-algebras,
$L_\infty$-morphisms and their twists by Maurer--Cartan elements. 
For proofs and details we refer the reader to
\cite{kraft:2024}.

We denote by $V^\bullet$ a graded vector space over a field
$\mathbb{K}$ of characteristic $0$ and define the \emph{shifted}
vector space $V[k]^\bullet$ by
\begin{equation*}
	V[k]^\ell
	=
	V^{\ell+k}.
\end{equation*}
\begin{definition}[(Curved) $L_\infty$-algebra]
A degree $+1$ coderivation $Q$ on the coaugmented counital conilpotent
cocommutative coalgebra $\ucocom(\mathscr{L}[1])$ cofreely cogenerated by the
graded vector space $\mathscr{L}[1]^\bullet$ over $\mathbb{K}$ is
called a \emph{curved $L_\infty$-structure} on the graded vector space
$\mathscr{L}$ if $Q^2=0$.
If $Q(1)=0$ we simply say $L_\infty$-algebra.
\end{definition}
Here 
$\ucocom(V)$ denotes the counital conilpotent cocommutative coalgebra cogenerated by a (graded) vector space 
$V$, which 
can be realized as the symmetrized deconcatenation
coproduct on the space $\bigoplus_{n\geq0}\bigvee^n\mathscr{L}[1]$
where $\bigvee^n\mathscr{L}[1]$ is the space of coinvariants for the
usual (graded) action of the symmetric group in $n$ letters $S_n$ on
$\otimes^n\mathscr{L}[1]$, see e.g. \cite{esposito:2021}. Let us denote by 
\begin{equation}
	Q_n^k\colon \bigvee^n(\mathscr{L}[1])\longrightarrow \bigvee^k\mathscr{L}[2]
\end{equation}
the components of the coderivation. 
 Any degree $+1$
coderivation $Q$ on $\ucocom(\mathscr{L})$ is then uniquely determined by the
components $Q^1_n$
through the formula 
\begin{equation}
	Q(\gamma_1\vee\ldots\vee\gamma_n)
	=
	\sum_{k=0}^n\sum_{\sigma\in\mbox{\tiny Sh($k$,$n-k$)}}
	\epsilon(\sigma)Q_k^1(\gamma_{\sigma(1)}\vee\ldots\vee
	\gamma_{\sigma(k)})\vee\gamma_{\sigma(k+1)}\vee
	\ldots\vee\gamma_{\sigma(n)}.
\end{equation} 
Here Sh($k$,$n-k$) denotes the set of $(k, n-k)$ shuffles in $S_n$, and
$\epsilon(\sigma)=\epsilon(\sigma,\gamma_1,\ldots,\gamma_n)$ is a sign
given by the rule $
\gamma_{\sigma(1)}\vee\ldots\vee\gamma_{\sigma(n)}=
\epsilon(\sigma)\gamma_1\vee\ldots\vee\gamma_n $.
We use the
conventions that Sh($n$,$0$)=Sh($0$,$n$)$=\{\id\}$ and that the empty
product equals the unit. The condition $Q(1)=0$ is equivalent to the vanishing of $Q_0$.  
In the following we are interested in two particular $L_\infty$-algebras that we briefly
recall.
\begin{example}[Multivector fields]
	Let $(A,\mu_0)$ be an commutative algebra. We define
	the graded vector space
	\begin{align}
		\multivect^n(A)
		:=
		\Sym_A\Der(A)[-1]
	\end{align}
which is a Gerstenhaber algebra via product of the symmetric algebra and the extension of the commutator. 
Here, $\Sym_A$ denotes the symmetric algebra over the commutative algebra $A$. 

For a smooth manifold 
$M$ we write $\multivect(M)$ instead of $\multivect(\Cinfty(M))$. Moreover, we follow the literature and denote by $T_\poly(M)=\multivect(M)[1]$ the associated graded Lie algebra of multivector fields.  
\end{example}
\begin{example}[Multidifferential operators]
	Let $(A,\mu_0)$ be an associative algebra. We define
	the graded vector space
	\begin{equation}
		C^n(A)
		:=
		\mathrm{Hom}(A^{\otimes n},A)
	\end{equation}
	for $n \geq 1$ with $C^{0}(A)=A$. 
	Let $D,E\in C^\bullet (A)$ and $d=\abs D$, $e =\abs E$.
	We define the concatenation of 
	$D,E$ by 
	\begin{equation}
		D \circ E (a_1,\dots,a_{d+e-1}) 
		= 
		\sum_{i=0}^{|D|-1} (-1)^{i (|E|-1)} 
		D(a_0,\dots, a_{i-1}, E(a_i,\dots,a_{i+e}),a_{i+e+1},\dots,a_{d+e})
	\end{equation}
	for any $a_1,\dots,a_{d+e-1}\in A$. As an immediate consequence, we define the \emph{Gerstenhaber bracket}
	as graded commutator with respect to the concatenation:
	\begin{equation}
		\label{eq:GerstenhaberBracketClassical}
		[D,E]_G
		=
		(-1)^{(\abs{E}-1)(\abs{D}-1)} \left(D \circ E - (-1)^{(\abs{E}-1)(\abs{D}-1)} E \circ D\right).
	\end{equation}
	Note that we use the sign convention from \cite{bursztyn.dolgushev.waldmann:2012a}.
	This yields a graded Lie algebra
	$(C^{\bullet}(A)[1],[\argument{,}\argument]_G)$.  The product $\mu_0$
	is an element of $C^1(A)[1]$ and one notices that the associativity of
	$\mu_0$ is equivalent to $[\mu_0,\mu_0]_G=0$.  Furthermore, we get an
	induced differential $\del \colon C^\bullet(A) \rightarrow C^{\bullet
		+1}(A)$ via $\del = [\mu_0,\argument]_G$, the so-called
	\emph{Hochschild differential}, and thus a DGLA structure.  For a
	smooth manifold $M$, we define now the differentiable Hochschild
	cochains which vanish on constants as
	\begin{align*}
		D_\poly^k(M)
		:=
		\mathrm{DiffOps}_{n.c.}(\Cinfty(M)^{\tensor {k+1}}, \Cinfty(M))
	\end{align*}
	as a subspace of $C^k(\Cinfty(M))[1]$. Note that the Gerstenhaber
	bracket and the Hochschild differential obviously restrict to
	$D_\poly^\bullet(M)$.
\end{example}
\begin{example}[Curved Lie algebra]
	\label{ex:curvedlie}
	A basic example of an $L_\infty$-algebra is that of a (curved)
	Lie algebra $(\mathscr{L},R,\D,[\argument,\argument])$ by
	setting $Q_0^1(1)={ -}R$, $Q_1^1={ -}\D$, $Q_2^1(\gamma\vee\mu)={
		-(-1)^{|\gamma|}}[\gamma,\mu]$ and $Q_i^1=0$ for all $i\geq
	3$.  Here we denoted the degree in $\mathscr{L}[1]$ by
	$|\cdot |$.
\end{example}
Suppose $(\mathscr{L},Q)$ is an $L_\infty$-algebra. We call an element
$\gamma\in\mathscr{L}[1]$ \emph{central} if
\[Q_{n+1}^1(\gamma\vee x_1\vee\ldots\vee x_n)=0\] for all $n\geq 1$.
%
%
\begin{definition}[Curved $L_\infty$-morphism]
Let us consider two $L_\infty$-algebras $(\mathscr{L},Q)$ and
$(\widetilde{\mathscr{L}},\widetilde{Q})$.  A degree $0$ counital
coalgebra morphism
\begin{equation*}
	F\colon 
	\ucocom(\mathscr{L}[1])
	\longrightarrow 
	\ucocom(\widetilde{\mathscr{L}}[1])
\end{equation*}
such that $FQ = \widetilde{Q}F$ and $F(1)=1$ is said to be a
\emph{curved $L_\infty$-morphism}.
\end{definition} 
A coalgebra morphism $F$ from $\ucocom(\mathscr{L})$ to
$\ucocom(\widetilde{\mathscr{L}})$ such that $F(1)=1$ is uniquely determined by its
components (also called \emph{Taylor coefficients})
\begin{equation*}
	F_n^1\colon \bigvee^n(\mathscr{L}[1])\longrightarrow \widetilde{\mathscr{L}}[1],
\end{equation*}
where $n\geq 1$. Namely, we set $F(1)=1$ and use the formula
\begin{equation*}
	F(\gamma_1\vee\ldots\vee\gamma_n)=
\end{equation*}
\begin{equation}
	\label{coalgebramorphism}
	\sum_{p\geq1}\sum_{\substack{k_1,\ldots, k_p\geq1\\k_1+\ldots+k_p=n}}
	\sum_{\sigma\in \mbox{\tiny Sh($k_1$,..., $k_p$)}}\frac{\epsilon(\sigma)}{p!}
	F_{k_1}^1(\gamma_{\sigma(1)}\vee\ldots\vee\gamma_{\sigma(k_1)})\vee\ldots\vee 
	F_{k_p}^1(\gamma_{\sigma(n-k_p+1)}\vee\ldots\vee\gamma_{\sigma(n)}),
\end{equation}
where $\mathrm{Sh}(k_1,\dots,k_p)$ denotes the set of $(k_1,\ldots,
k_p)$-shuffles in $S_n$.
The element $\pi\in \mathscr{L}^1$ is called a \emph{Maurer--Cartan element} if it
satisfies the equation
	\begin{align*}
	Q_0^(1)+\sum_{k\geq 1} \frac{1}{k!}Q^1_k(\pi^{\vee k})=0,
	\end{align*}
which boils down in case the $L_\infty$-algebra is a curved Lie algebra to the equation 
\begin{equation*}
	R+\D\pi+\frac{1}{2}[\pi,\pi]
	=
	0.
\end{equation*}
Two Maurer--Cartan elements $\pi_0$ and $\pi_1$ are called homotopic, if there exists a path 
(smooth, polynomial, depending on the context) $\pi_t$ of Maurer--Cartan elements together with a path 
$\lambda_t$ of degree $0$ elements, such that $\pi_t\at{t=i}=\pi_i$ for $i\in \{0,1\}$ and 
	\begin{align*}
	\frac{\D}{\D t}\pi_t=\sum_{k\geq 0}\frac{1}{k!}Q_{k+1}^1(\pi_t^{\vee k}\vee \lambda_t),
	\end{align*}
which is case of a curved Lie algebra boils down to 
	\begin{align*}
	\frac{\D}{\D t}\pi_t=-\D \lambda_t -[\pi_t,\lambda_t].
	\end{align*}
%
For a later use we recall briefly homotopy equivalences between $L_\infty$-morphisms: let $(\mathscr{L},Q),(\mathscr{L}',Q')$ be two 
$L_\infty$-algebras and consider the space $\mathrm{Hom}(\Sym(L[1]),L')$ 
of graded linear maps, which has the structure of an $L_\infty$-algebra given by the maps
$\widehat{Q}_0$ is the map sending $1$ to $Q'_0$, 
\begin{equation}
	  \label{eq:DiffonConvLinfty}
	  \widehat{Q}^1_1 F
		=
		Q'^1_1 \circ F - (-1)^{\abs{F}} F \circ Q
	\end{equation}
	and 
	\begin{equation}
		\label{eq:BracketonConvLinfty}
		\widehat{Q}^1_n(F_1\vee \cdots \vee F_n)
		=
		(Q')^1_n \circ 
		(F_1\vee F_2\vee \cdots \vee F_n).
	\end{equation}
	Here $\abs{F}$ denotes the degree in 
	$\mathrm{Hom}(\Sym(L[1]),L')[1]$.
	The $L_\infty$-algebra $(\mathrm{Hom}(\Sym(L[1]),L'),\widehat{Q})$ is 
	called \emph{convolution $L_\infty$-algebra} and its 
	Maurer-Cartan elements which vanish on $1$ can be identified with $L_\infty$-morphisms. 
	\begin{definition}[Homotopy Equivalence]
	\begin{definitionlist}
		\item Two $L_\infty$-morphisms $F$ and $G$ are now called homotopic if they are 
	homotopic as Maurer--Cartan elements in $(\mathrm{Hom}(\Sym(L[1]),L'),\widehat{Q})$, such that all the 
	involved paths vanish on $1$.
		\item $F\colon \mathscr{L}\to \mathscr{L}'$ is a homotopy equivalence,
	if there exists a $G\colon \mathscr{L}'\to \mathscr{L}$ such that $F\circ G$ and $G\circ F$ are homotopic to the respective 
	identities. In case that the $L_\infty$-algebras are actually are non-curved, this notion coincides with the notion 
	of quasi-isomorphism.
	\end{definitionlist}
	\end{definition}
	For further details we refer the reader to \cite{kraft:2024}. 
%
%

%
%
%

\section{A zig-zag of $G_\infty$-quasi-isomorphisms}
\label{sec:zigzag}

In this section we construct an zig-zag connecting the
Hochschild complex and the multivector fields. This zig-zag 
is a refinement of a similar zig-zag constructed in 
\cite{dolgushevetal:2007} and it is basically included in there. 
For convenience, we recall its construction and state its refinement 
explicitly.  
Note that this zig-zag is a zig-zag of 
$G_\infty$-algebras, which we only briefly recall. The interested
reader can find more details in \cite{vallette:2012}.
%
\begin{definition}[dg Gerstenhaber algebra]
	A dg Gerstenhaber algebra is a triple $(\algebra A^\bullet,
	\D, m , [\argument, \argument])$, where $\algebra A^\bullet$
	is a graded space, $\D \colon \algebra A^\bullet\to \algebra
	A^\bullet$ is a differential of degree $1$, $m \colon \algebra
	A^\bullet \times \algebra A^\bullet \to \algebra A^\bullet$,
	$m(a,b) = ab$ is a graded commutative product of degree $0$
	and $[\argument, \argument] \colon \algebra A^\bullet[1]\times
	\algebra A^\bullet[1] \to \algebra A^\bullet[1]$ is a graded
	Lie bracket, satisfying the following properties:
	\begin{equation}
		\D (ab)
		=
		\D(a)b + (-1)^{\abs a} a \D(b),
	\end{equation}
	\begin{equation}
		[a, bc]
		=
		[a,b]c + (-1)^{(\abs a -1)\abs b} b[a,c]
	\end{equation}
	and
	\begin{equation}
		\D([a, b])
		=
		[\D a, b] + (-1)^{\abs a } [a,\D b]
	\end{equation}
	for any homogeneous element $a,b,c \in \algebra 
	A^\bullet$.
\end{definition}
%
%
Let $\cocom(V)$ be
the cofree conilpotent cocommutative coalgebra cogenerated by a graded
vector space $V$, which is realized by $\cocom(V)=\oplus_{k\geq 1}
\bigvee^k V$ of symmetric tensors and let $\Delta_{\sh}$ be the reduced
shuffle coproduct, see \cite{kraft:2024} for details. Furthermore, $\colie(W)$ 
denotes the cofree conilpotent Lie coalgebra
cogenerated by a graded vector space $W$. It is constructed as
follows: consider the cofree conilpotent coalgebra $(T^c(W),\Delta)$
which is the tensor coalgebra with the deconcatenation coproduct. It
can be made into a bialgebra with the product
\begin{align*}
	\mu(w_1\tensor\cdots \tensor w_l, w_{l+1}\tensor 
	\cdots \tensor w_k)= \sum_{\sigma \in \mathrm{Sh}(l,k-l)}\epsilon(\sigma)w_{\sigma^{-1}(1)}\tensor
	\cdots\tensor w_{\sigma^{-1}(k)}.
\end{align*}
One can show that the graded cocommutator $\Delta_L$
preserves
$\cc{T}^c(W):=\bigoplus_{k\geq 1} W^{\tensor k}$ and contains
\begin{align*}
	\mu(\cc{T}^c(W),\cc{T}^c(W))\subseteq \cc{T}^c(W)
\end{align*}
as a Lie coideal. We define now 
\begin{align}
	\colie(V)=\cc{T}^c(W)/\mu(\cc{T}^c(W),\cc{T}^c(W))
\end{align}
to be the quotient graded Lie coalgebra.  For a tensor
$w_1\tensor\cdots \tensor w_k\in\cc{T}^c(W)$ its equivalence class in
$\colie(V)$ by $(w_1|\cdots|w_k)$. Note that $\colie(V)$ is
canonically graded by tensor powers. For details on the cofree Lie
coalgebras and Lie coalgebras in general see \cite{michaelis}.
\begin{definition}[$G_\infty$-algebra]\label{Def:Ginftyalg}
	\begin{definitionlist}
		\item A $G_\infty$-algebra is a graded vector space
		$\algebra A^\bullet$ together with a degree $+1$
		endomorphism $q$ on $\gerst^\vee(\algebra{A}^\bullet) :=
		\cocom(\colie(\algebra{A}^\bullet[1])[1])[-2]$ such
		that $q^2=0$, 
		$$(q\tensor \id +\id\tensor q)\circ
		\Delta_\sh=\Delta_\sh\circ q
		\quad
		\text{and}
		\quad
		(q\tensor \id
		+\id\tensor q)\circ \delta=\delta\circ q.$$
		\item A $G_\infty$-morphism $F\colon
		(\algebra{A}^\bullet,q_\algebra{A})\to(\algebra{B}^\bullet,q_\algebra{B})$
		is a degree $0$ map $F\colon
		\gerst^\vee(\algebra{A}^\bullet)\to
		\gerst^\vee(\algebra{B}^\bullet)$ such that
		\begin{align*}
			\Delta_\sh\circ F 
			&=
			F\tensor F\circ \Delta_\sh,
			\\
			\delta\circ F
			&=
			F\tensor F\circ \delta,
			\\
			F\circ q_\algebra{A}
			&=q_\algebra{B}\circ F.
		\end{align*}
	\end{definitionlist}
\end{definition}
It should be noted that $\gerst^\vee$ is in fact the Koszul dual operad of the Gerstenhaber
operad and $\gerst^\vee(V)$ is the cofree Gerstenhaber coalgebra cogenerated by
$V$, see \cite{getzler} for details. Nevertheless, we are not gonna use this fact and we see 
the above equation as a definition.   

Any endomorphism $q$ on $\gerst^\vee(\algebra{A}^\bullet)$ fulfilling the
requirements from Definition \ref{Def:Ginftyalg} is uniquely
determined by its Taylor coefficients
\begin{equation}\label{Eq:TayCoeGinfty}
	q^{n_1,\dots,n_k}\colon \colie^{n_1}(\algebra{A}[1])[1]\vee\cdots \vee\colie^{n_k}(\algebra{A}[1])[1]\to \algebra{A}[3]
\end{equation}
as can be found in  \cite[Section 10.1.8]{loday.vallette:2012a}.
Given Taylor coefficients as above, their extensions to coderivations of $\gerst^\vee(\algebra{A})$
are referred as 
\emph{straight shuffle extensions}. A detailed discussion thereof can be found in 
\cite{vallette:2012}.

A dg Gerstenhaber algebra is an example of $G_\infty$-algebra. 
For our purposes it is important to show the
relation between $L_\infty$ and $G_\infty$ structures, as proved in
the following Proposition.
\begin{proposition}
	For any $G_\infty$-algebra $\algebra{A}$ there is an  induced $L_\infty$-algebra
	structures on $\algebra{A}[1]$.
	Moreover, let $F\colon \algebra{A}\to \algebra{B}$ be a
	$G_\infty$-morphism, then it induces an $L_\infty$-morphism from
	$\algebra{A}[1]$ to $\algebra{B}[1]$.
\end{proposition}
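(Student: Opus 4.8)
The plan is to realise the induced $L_\infty$-structure on $\algebra{A}[1]$ as the ``Lie-weight-one part'' of the $G_\infty$-structure, by exhibiting $\ucocom((\algebra{A}[1])[1])$ as a sub-coalgebra of $\gerst^\vee(\algebra{A})$ which is preserved by the codifferential $q$. The starting observation is that the weight-one component $\colie^1(\algebra{A}[1]) = \algebra{A}[1]$ of the cofree Lie coalgebra $\colie(\algebra{A}[1])$ is a \emph{trivial} sub-Lie-coalgebra: the graded cocommutator $\Delta_L$ is homogeneous for the tensor-power grading (the \emph{weight}) and maps $\colie^n$ into $\bigoplus_{i+j=n,\,i,j\geq1}\colie^i\tensor\colie^j$, which is $0$ for $n=1$. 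Writing $Y_1 := \colie^1(\algebra{A}[1])[1]\subseteq\colie(\algebra{A}[1])[1]$, it follows that $U := \cocom(Y_1)[-2]$ is a sub-coalgebra of $\gerst^\vee(\algebra{A}) = \cocom(\colie(\algebra{A}[1])[1])[-2]$ for the coproduct $\Delta_\sh$, because the reduced shuffle coproduct only regroups the Lie-letters of a symmetric word without changing their weight. As a graded coalgebra $U$ is the reduced part of $\ucocom((\algebra{A}[1])[1])$ — the global shift $[-2]$ changes neither the coalgebra structure nor the degree of a coderivation — so a degree $+1$ coderivation of $U$ squaring to zero is exactly the datum of a (non-curved) $L_\infty$-structure on $\algebra{A}[1]$.

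I would then show that $q$ restricts to $U$. By the reconstruction of $q$ from its Taylor coefficients \eqref{Eq:TayCoeGinfty} — the straight shuffle extension, cf.\ \cite{vallette:2012,loday.vallette:2012a} — the cogenerator-valued component of $q$, evaluated on a word whose Lie-letters all have weight one, takes values in $Y_1$ (this is exactly what the target $\algebra{A}[3]$ of the coefficients $q^{1,\dots,1}$ records), whereas the coefficients $q^{n_1,\dots,n_k}$ with some $n_i\geq 2$ require a Lie-letter of weight $\geq 2$ as an argument and hence annihilate the words of $U$. Consequently $q(U)\subseteq U$, and only the $q^{1,\dots,1}$ act nontrivially on $U$. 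Put $Q := q|_U$. As the restriction of the coderivation $q$ to a sub-coalgebra it preserves, $Q$ is a degree $+1$ coderivation of $(U,\Delta_\sh|_U)$, and $Q^2 = q^2|_U = 0$. By the first paragraph $Q$ is an $L_\infty$-structure on $\algebra{A}[1]$, with Taylor coefficients the maps $q^{1,\dots,1}$.

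The morphism statement follows by the same device. Given a $G_\infty$-morphism $F\colon\gerst^\vee(\algebra{A})\to\gerst^\vee(\algebra{B})$, applying $\delta\circ F = (F\tensor F)\circ\delta$ to the (weight-one, hence $\delta$-closed) generators of $U_{\algebra{A}}$ shows that their $F$-images are again $\delta$-closed, hence of weight one, since the coprimitives of a cofree conilpotent Lie coalgebra are exactly its cogenerators (see \cite{michaelis}); together with the coalgebra-morphism reconstruction formula — the analogue of \eqref{coalgebramorphism} — this yields $F(U_{\algebra{A}})\subseteq U_{\algebra{B}}$. Therefore $F|_{U_{\algebra{A}}}\colon U_{\algebra{A}}\to U_{\algebra{B}}$ is a morphism of cocommutative coalgebras between these two sub-coalgebras, and it intertwines $Q_{\algebra{A}} = q_{\algebra{A}}|_{U_{\algebra{A}}}$ with $Q_{\algebra{B}} = q_{\algebra{B}}|_{U_{\algebra{B}}}$ because $Fq_{\algebra{A}} = q_{\algebra{B}}F$; that is, $F|_{U_{\algebra{A}}}$ is an $L_\infty$-morphism $\algebra{A}[1]\to\algebra{B}[1]$. (Conceptually this is nothing but the functor induced on cobar constructions by the suboperad inclusion $\flie\{1\}\hookrightarrow\gerst$, but the explicit argument is quicker here.)

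I expect the main obstacle to be the weight bookkeeping in the second step: making precise that the straight shuffle extension of the $G_\infty$-data, and of a $G_\infty$-morphism, keeps the cogenerator-valued component inside weight one whenever all of its Lie-letter arguments have weight one; everything else is the formal fact that coderivations and coalgebra morphisms restrict to preserved sub-coalgebras. This point can be settled either by unwinding the explicit straight-shuffle-extension formulas of \cite{vallette:2012,loday.vallette:2012a}, or directly from the compatibility with $\delta$ in Definition~\ref{Def:Ginftyalg}: for a weight-one cogenerator $x$ one has $\delta(qx) = (q\tensor\id+\id\tensor q)\delta(x) = 0$, so $qx$ is $\delta$-closed and hence again of weight one, and analogously for higher arities and for $F$.
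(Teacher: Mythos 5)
Your proposal is correct and takes essentially the same route as the paper's own proof: both identify the sub-coalgebra $\cocom(\algebra{A}[2])\hookrightarrow\cocom(\colie(\algebra{A}[1])[1])$, use the compatibility $(q\tensor\id+\id\tensor q)\circ\delta=\delta\circ q$ to show that $q$ (and likewise $F$) restricts to it, and read off the Taylor coefficients as the $q^{1,\dots,1}$. Your write-up merely spells out the weight bookkeeping and the coprimitive characterization that the paper leaves implicit.
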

\begin{proof}
	Note that
	\begin{align*}
		\cocom(\algebra{A}[2])\hookrightarrow 
		\cocom(\colie(\algebra{A}[1])[1])
	\end{align*}
	is an inclusion of coalgebras. Using that $(q\tensor \id
	+\id\tensor q)\circ \delta=\delta\circ q$ for the
	$G_\infty$-structure on $\algebra{A}$, one can show that $q$
	restricts to $\cocom(\algebra{A}[2])$ and thus gives us an
	$L_\infty$-structure on $\algebra{A}[1]$.  In terms of Taylor
	coefficients like in Equation \eqref{Eq:TayCoeGinfty}, the
	Taylor coefficients of the $L_\infty$ are given by
	\begin{align*}
		q^{1,\dots,1}\colon \algebra{A}[2]\vee\cdots\vee\algebra{A}[2]\to \algebra{A}[2][1]
	\end{align*}
	With the same
	reasoning, one can show that the $G_\infty$-morphism $F$,
	restricted to $\cocom(\algebra{A}[2])$ takes only values in
	$\cocom(\algebra{B}[2])$ and thus is an $L_\infty$-morphism of
	the corresponding $L_\infty$-algebras.
\end{proof}
For any associative algebra $(A,\mu_0)$ the Hochschild complex does
not only carry the structure of a DGLA. In fact there is the so-called
cup product which is defined for two cochains $D$ and $E$ by
\begin{equation}
	D \cup E (a_1,\dots,a_{d+e})
	=
	D(a_1,\dots, a_d)E(a_{d+1},\dots,a_{d+e}),
\end{equation}
where $a_1,\dots,a_{d+e}\in A$. This product turns
$(C^{\bullet}(A),\partial, \cup)$ into a graded differential
associative algebra.  Note that the cup product is not graded
commutative, so it is not part of a Gerstenhaber
structure, but one can prove that the Hochschild complex $C^\bullet(A)$ of a
commutative algebra $A$ has a natural $G_\infty$-structure 
%
with Taylor components $q^{n_1,\cdots,n_k}$
satisfying the following properties
\begin{align}
	\label{item: Liebial}
	q^{n_1,\cdots,n_k}=0
\end{align}
for all $k\geq 3$,
and
\begin{align}
	\label{eq:good}
	q^{1,n}(X,C)=0
\end{align}
for any $X\in \Der(A)$ and arbitrary 
$C\in \colie^{n}(C^{\bullet}(A)[1])$ 
for $n\geq 2$, see \cite[Theorem 2.1]{tamarkin:1999}.
Moreover, for $D_1,D_2\in C^\bullet(A)$ we have 
	\begin{align*}
		q^{2}(D_1,D_2)=\frac{1}{2}(D_1\cup D_2+
		(-1)^{|D_1||D_2|}D_2\cup D_1). 
	\end{align*}
%
%
%
%
%
%
\begin{lemma}
	\label{lemma:liebialgcolie}
	Let $\algebra{A}^\bullet$ be a $G_\infty$-algebra satisfying \eqref{item: Liebial}. Then 
	\begin{align}
		\mathscr{L}_\algebra{A}
		:= 
		(\colie (\algebra A [1]), [\argument, \argument], \D , \delta^c)
	\end{align}
	is a dg Lie bialgebra with bracket 
	\begin{align}
		[C,D]
		= 
		[C,D]^{n_1,n_2}   
	\end{align}
	for $C\in \colie^{n_1}(\algebra A [1])$ and
	$D\in\colie^{n_2}(\algebra A [1])$, where
	$[\argument,\argument]^{n_1,n_2}$ is the straight shuffle
	extension of $q^{n_1,n_2}$.  The differential $\D$ given by
	the straight shuffle extension of the $q^{n}$'s and $\delta^c$
	is the cofree Lie cobracket.
\end{lemma}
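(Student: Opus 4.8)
The plan is to unwind the definition of a $G_\infty$-structure satisfying \eqref{item: Liebial} and check the dg Lie bialgebra axioms one by one on $\colie(\algebra A[1])$. Recall that a dg Lie bialgebra consists of a graded vector space equipped with a differential $\D$, a Lie bracket $[\argument,\argument]$, and a Lie cobracket $\delta^c$, such that $\D$ is a derivation for the bracket and a coderivation for the cobracket, and such that the cobracket is a $1$-cocycle for the adjoint action (the cocycle/compatibility condition). The cofree conilpotent Lie coalgebra $\colie(\algebra A[1])$ already carries its tautological Lie cobracket $\delta^c$, so that part is automatic; the content is to produce the bracket and the differential from the $G_\infty$-data and verify compatibility.

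First I would produce the bracket. The Taylor coefficients $q^{n_1,n_2}\colon \colie^{n_1}(\algebra A[1])[1]\vee \colie^{n_2}(\algebra A[1])[1]\to \algebra A[3]$ are, by the hypothesis \eqref{item: Liebial}, the only ``binary-and-higher'' pieces of $q$ beyond the unary ones (all $q^{n_1,\dots,n_k}$ with $k\geq 3$ vanish). Their straight shuffle extensions $[\argument,\argument]^{n_1,n_2}$ assemble, summing over all pairs $(n_1,n_2)$, into a single degree-preserving bracket $[\argument,\argument]$ on $\colie(\algebra A[1])$; the graded antisymmetry follows from the (co)symmetry of $q$ under interchanging the two $\colie$-slots, and the graded Jacobi identity is exactly the component of $q^2=0$ that involves two binary operations — here the absence of ternary terms ($k\geq 3$) is what makes this the honest Jacobi identity rather than a homotopy-Jacobi relation. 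Similarly the unary coefficients $q^n\colon \colie^n(\algebra A[1])[1]\to \algebra A[3]$ extend by straight shuffles to a degree $+1$ operator $\D$, and $\D^2=0$ plus the Leibniz rule $\D[\argument,\argument]=[\D\argument,\argument]\pm[\argument,\D\argument]$ are the remaining components of $q^2=0$: respectively the $q^n q^m$ part and the mixed $q^n q^{m_1,m_2}$ part. This gives a dg Lie algebra structure $(\colie(\algebra A[1]),[\argument,\argument],\D)$.

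Next I would handle the coalgebra side and the bialgebra compatibility. The cobracket $\delta^c$ is the cofree Lie cobracket on $\colie(\algebra A[1])$, which is coassociative-in-the-Lie-sense by construction. That $\D$ is a coderivation for $\delta^c$ — i.e. $(\D\tensor\id+\id\tensor\D)\circ\delta^c=\delta^c\circ\D$ — comes precisely from the $G_\infty$-axiom $(q\tensor\id+\id\tensor q)\circ\delta=\delta\circ q$ restricted to its unary part. The crucial bialgebra compatibility condition, namely that $\delta^c$ is a derivation of the bracket (equivalently a $1$-cocycle for the adjoint coaction), comes from the same $G_\infty$-axiom $(q\tensor\id+\id\tensor q)\circ\delta=\delta\circ q$ now read on its \emph{binary} part: differentiating along $q^{n_1,n_2}$ against $\delta$ produces exactly the terms $\delta^c[C,D]=[\delta^c C,D]+[C,\delta^c D]$ in the appropriate graded sense. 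One must also check that $[\argument,\argument]$ and $\delta^c$ each preserve the grading by tensor powers compatibly — the bracket of $\colie^{n_1}$ with $\colie^{n_2}$ lands where the shuffle extension dictates, and $\delta^c$ decreases total tensor degree by splitting — and that these gradings are mutually compatible so the structure is genuinely a (graded) Lie bialgebra and not merely a Lie algebra with an unrelated Lie coalgebra structure.

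The main obstacle I anticipate is bookkeeping: carefully extracting, from the single equation $q^2=0$ and the two $G_\infty$-compatibility equations with $\Delta_\sh$ and $\delta$, exactly which multilinear component yields which bialgebra axiom, and getting all the Koszul signs right under the repeated shifts $[1]$, $[-2]$ implicit in $\gerst^\vee$. The conceptual input — that $\eqref{item: Liebial}$ kills precisely the homotopies that would obstruct strict (co)associativity and strict Jacobi — is clean, but translating ``straight shuffle extension'' into explicit formulas on $\colie(\algebra A[1])$ and verifying the cocycle identity termwise is where the real work sits. I would organize the proof around the three equations defining a $G_\infty$-algebra, peeling off arities one at a time, rather than attempting a single global computation.
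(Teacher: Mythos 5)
Your outline is mathematically sound, but it takes a genuinely different route from the paper: the paper's entire proof is a citation, observing that $G_\infty$-algebras satisfying \eqref{item: Liebial} are exactly what Hinich calls $\widetilde{\mathscr{B}}$-algebras and that a detailed proof of the lemma is contained in \cite{hinich}. What you propose is, in effect, a reconstruction of that argument: extracting the bracket from the straight shuffle extensions of the $q^{n_1,n_2}$, the differential from the $q^n$, reading the Jacobi and Leibniz identities off the components of $q^2=0$ (with the vanishing of the $k\geq 3$ coefficients guaranteeing strict rather than homotopy Jacobi), and obtaining the cocycle compatibility of $\delta^c$ with the bracket from the binary part of $(q\tensor \id+\id\tensor q)\circ\delta=\delta\circ q$. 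This is the correct decomposition and matches the content of the cited proof; what your version buys is self-containedness, at the cost of the sign bookkeeping you rightly flag. The one step you should not gloss over is that the straight shuffle extensions of the $q^{n_1,n_2}$ and $q^n$ genuinely restrict to operations on $\colie(\algebra A[1])\subset\gerst^\vee(\algebra A)$ with the expected target tensor degree --- this is precisely where the compatibility of $q$ with the Lie cobracket $\delta$ is used, and it is prior to (not part of) the verification of the bialgebra axioms. With that point made explicit, your argument would stand as a full proof where the paper offers only a pointer to the literature.
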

\begin{proof}
In \cite{hinich}, $G_\infty$-algebras fulfilling property  \eqref{item: Liebial} are called 
$\widetilde{\mathscr{B}}$-algebras. Inside this note there is also a detailed proof of the above lemma. 
\end{proof}

\begin{remark}
	Note that any Gerstenhaber algebra (in particular
	$\multivect^\bullet(A)$), satisfies
	property~\eqref{item: Liebial}. Thus, it can be turned into a dg
	Lie bialgebra via Lemma~\ref{lemma:liebialgcolie}.
\end{remark}
Consider
\begin{align}
	L_A
	:=
	\bigoplus_{n=1}^\infty
	\big\{
	\phi\in\colie^n(\multivect^{\bullet}(A)[1])
	\vert \ 
	\deg(\phi)\in\{-n,-n+1\} \big\} \subset 
	\mathscr{L}_{\multivect^\bullet(A)}.  
\end{align}
One can easily check 
that $L_A$ is actually a sub-Lie
bialgebra of $\mathscr{L}_{\multivect^\bullet(A)}$ and property~\eqref{eq:good} 
ensures that it is also a sub-Lie bialgebra of
$\mathscr{L}_{C^\bullet(A)}$ (a detailed proof of this claim can be found in
\cite{dolgushevetal:2007}). One can actually see that
\begin{align}
	(a_1|\cdots|a_k) \in L_A \subseteq \mathscr{L}_{\multivect^\bullet(A)}
\end{align}
if all besides at most one of the $a_i$'s are elements of $A$ and the
remaining one is a derivation.  \newline
Now we apply a general observation
about dg Lie bialgebras:
\begin{lemma}\label{Lem: dgLb to Gerst}
	\label{lem:dggerst}
	Let $(L,\D,[\argument,\argument],\delta)$ be a dg Lie
	bialgebra, then $\comm(L[-1])$ can be endowed with the
	structure of a dg Gerstenhaber algebra. In
	particular, the bracket is given by
	\begin{align}
		\{\ell_1\vee\cdots\vee \ell_k,m_1\vee\cdots\vee m_n\}
		=
		\sum_{i,j}
		\epsilon(\sigma_{i,j})
		[\ell_i,m_j]\vee \ell_1\vee\cdots\vee \ell_{k} \vee m_1
		\vee\cdots \vee m_n,
	\end{align}
	the product is the free commutative product and the
	differential is given by the extension of $\D$ and $\delta$ as
	derivations of the product. Furthermore, a morphism of dg Lie
	bialgebras $f\colon L\to L'$ induces a morphism of dg
	Gerstenhaber algebras $\comm(f)\colon
	\comm(L[-1])\to\comm(L'[-1])$.
\end{lemma}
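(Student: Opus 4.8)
The plan is to construct the dg Gerstenhaber structure on $\comm(L[-1])$ directly and then verify the axioms, exploiting that $\comm(L[-1])$ is by definition the free graded-commutative algebra on $L[-1]$, so that all structure maps are determined by their values on generators. First I would fix the degree conventions: writing $\ell \in L$ of degree $|\ell|$, the corresponding generator in $L[-1]$ has degree $|\ell|+1$, so the Gerstenhaber bracket of degree $-1$ on $\comm(L[-1])$ should restrict on generators to something built from the Lie bracket $[\argument,\argument]$ on $L$ (degree $0$), with the shift accounting for the sign $\epsilon(\sigma_{i,j})$ in the displayed formula. The product is simply the free commutative product $\vee$, which has degree $0$, and the unit is the empty product $1$.

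The key steps, in order, are: (1) Define the differential on $\comm(L[-1])$ as the unique degree $+1$ derivation of $\vee$ extending the map on generators induced by $\D$ together with the Lie cobracket $\delta$; concretely, $\D_{\comm}(\ell_1 \vee \cdots \vee \ell_k)$ has one term applying $\D$ to each factor and one term replacing a factor $\ell_i$ by its cobracket $\delta(\ell_i)$, viewed as a sum of two-fold products. One must check $\D_{\comm}^2 = 0$; this is exactly where the full dg Lie bialgebra compatibility between $\D$, $[\argument,\argument]$, $\delta$ is used — the Jacobi identity for $\D^2$ on products reduces to: $\D^2=0$ on $L$, co-Jacobi for $\delta$, and the compatibility $\D\delta = (\D\tensor\id + \id\tensor\D)\delta$ (up to signs). (2) Verify the bracket $\{\argument,\argument\}$ defined by the displayed formula is graded antisymmetric of degree $-1$ and satisfies the graded Jacobi identity; since the bracket is a biderivation of $\vee$ by construction, Jacobi needs to be checked only on triples of generators, where it reduces to the Jacobi identity of $[\argument,\argument]$ on $L$. (3) Check the Leibniz/Poisson compatibility $\{a, b\vee c\} = \{a,b\}\vee c \pm b \vee \{a,c\}$, which is immediate from the biderivation property, and the compatibility $\D_{\comm}\{a,b\} = \{\D_{\comm}a, b\} \pm \{a, \D_{\comm}b\}$, which on generators is the dg Lie bialgebra axiom relating $\D$ to $[\argument,\argument]$ plus the axiom relating $\delta$ to $[\argument,\argument]$ (the latter being the "Lie bialgebra" cocycle condition, needed because $\D_{\comm}$ has a $\delta$-component). (4) Finally, given a morphism $f\colon L \to L'$ of dg Lie bialgebras, define $\comm(f)$ as the unique algebra morphism extending $f[-1]$ on generators; functoriality and compatibility with $\D_{\comm}$ and $\{\argument,\argument\}$ are then automatic on generators because $f$ intertwines $\D$, $[\argument,\argument]$, and $\delta$, and both sides are (bi)derivations or algebra morphisms, hence determined by their restriction to generators.

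The main obstacle I expect is bookkeeping the signs $\epsilon(\sigma_{i,j})$ consistently across the shift $L \rightsquigarrow L[-1]$, and in particular making sure that the $\delta$-component of $\D_{\comm}$ interacts correctly with the bracket: the cross-term in $\D_{\comm}^2$ and in the $\D_{\comm}$-derivation property of $\{\argument,\argument\}$ is precisely the place where the cobracket's compatibility with the differential and the bracket enters, and getting the Koszul signs to cancel there is the delicate point. Everything else is the standard principle that a (bi)derivation or algebra morphism on a free commutative algebra is determined by, and can be checked on, generators, so the verification localizes to the defining axioms of a dg Lie bialgebra. I would structure the write-up so that the reader sees the three generator-level identities being used once each, rather than expanding the products in full.
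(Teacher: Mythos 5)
Your proposal is correct; note that the paper itself gives no proof of this lemma (it is stated as ``a general observation about dg Lie bialgebras''), and your generator-level verification -- extending $\D$ and $\delta$ as derivations, extending $[\argument,\argument]$ as a biderivation, and reducing $\D_{\comm}^2=0$, Jacobi, and the Leibniz compatibilities to the dg Lie bialgebra axioms on generators -- is exactly the standard argument being implicitly invoked. The only cosmetic slip is the phrase ``Jacobi identity for $\D^2$'' (you mean that $\D_{\comm}^2=0$ reduces to $\D^2=0$, co-Jacobi for $\delta$, and the $\D$--$\delta$ compatibility), which does not affect the substance.
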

The next proposition is in some sense the dual version of Lemma \ref{Lem: dgLb to Gerst}
\begin{proposition}
	\label{prop:dgliebi}
	Let $(L, \D_L, [\argument, \argument]_L, \delta_L)$ be a dg Lie bialgebra. 
	\begin{propositionlist}
		\item $\flie (\cocom (L [1])[-1])$ is a Lie bialgebra
		in a natural way, such that the canonical map
		\begin{align*}
			\flie(\cocom(L[1])[-1])\longrightarrow L
		\end{align*}
		is a morphism of dg Lie bialgebras;
		\item Let $\algebra A^\bullet$ be a $G_\infty$-algebra
		satisfying condition \eqref{item: Liebial}. Then the
		dg Gerstenhaber algebra \\ $\comm (\flie (\cocom
		(\colie (\algebra A[1]))[1])[-1])[-1])$ coincides
		with $\gerst((\gerst^\vee (\algebra A))$.
	\end{propositionlist}
\end{proposition}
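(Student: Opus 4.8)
The plan is to prove the two parts of Proposition~\ref{prop:dgliebi} separately, using the cofreeness of the constructions involved and dualizing the arguments behind Lemma~\ref{lem:dggerst} and Proposition on $L_\infty$/$G_\infty$-comparison.

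For part \emph{i.)}, the idea is that $\flie(\cocom(L[1])[-1])$ should be thought of as the cofree Lie coalgebra--free Lie algebra on the graded vector space $L$, and the bialgebra structure is forced by the universal properties. Concretely, I would first build the Lie coalgebra structure: since $\cocom(L[1])$ is a conilpotent cocommutative coalgebra, $\cocom(L[1])[-1]$ carries (after a shift) a natural ``cobracket-like'' structure induced by the original cobracket $\delta_L$ together with the coproduct on $\cocom$; extending freely through the free Lie algebra functor $\flie$ (which is bialgebra-valued: the free Lie algebra on a Lie coalgebra is canonically a Lie bialgebra, a classical fact) produces the Lie bialgebra structure. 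Then the canonical map $\flie(\cocom(L[1])[-1])\to L$ is the composite of the counit-type projection $\cocom(L[1])[-1]\to L$ (picking out the cogenerators, i.e.\ the weight-one part) with the universal map out of the free Lie algebra; one checks it is a morphism of Lie coalgebras by a direct diagram chase using that $\delta_L$ is a coderivation, and a morphism of Lie algebras because it is defined on generators. The compatibility of $\D_L$ and $\delta_L$ (the dg Lie bialgebra axiom) is what makes the differential on the big object a coderivation and derivation simultaneously, hence the statement about \emph{dg} Lie bialgebras. I would emphasize that ``natural'' here means: this is the value of a functor, so a dg Lie bialgebra map $L\to L'$ induces one on the big objects compatibly with the canonical maps.

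For part \emph{ii.)}, the strategy is purely formal manipulation of the functors $\comm$, $\flie$, $\cocom$, $\colie$, $\gerst$, $\gerst^\vee$. Starting from $\gerst^\vee(\algebra A) = \cocom(\colie(\algebra A[1])[1])[-2]$ (the definition given before Definition~\ref{Def:Ginftyalg}), I would apply $\gerst$, the free Gerstenhaber algebra functor, and use the Koszul-duality bar--cobar-type identity: the free Gerstenhaber algebra on a cofree Gerstenhaber coalgebra decomposes, via the operadic structure $\gerst = \comm\circ\flie$ (Gerstenhaber = Poisson up to shift, and Poisson operad is $\comm\circ\flie$ as a distributive law), into $\comm(\flie(-))$ applied to the underlying space with the appropriate shifts. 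Feeding in that $\gerst^\vee(\algebra A)$ is built from $\colie(\algebra A[1])$ via $\cocom(-[1])[-2]$, and tracking shifts carefully, one lands exactly on $\comm(\flie(\cocom(\colie(\algebra A[1]))[1])[-1])[-1]$. The condition \eqref{item: Liebial} (vanishing of $q^{n_1,\dots,n_k}$ for $k\geq 3$) is what guarantees, via Lemma~\ref{lemma:liebialgcolie}, that $\colie(\algebra A[1])$ is an honest dg Lie bialgebra so that part \emph{i.)} applies to $L=\mathscr{L}_\algebra A$; then part \emph{i.)} identifies $\flie(\cocom(\mathscr{L}_\algebra A[1])[-1])$ as a dg Lie bialgebra, Lemma~\ref{lem:dggerst} turns that into a dg Gerstenhaber algebra via $\comm((-)[-1])$, and the resulting object is literally the left-hand side of the claimed identity.

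The main obstacle I expect is bookkeeping: getting all the degree shifts $[\pm1]$, $[\pm2]$ to match on the nose between the two sides of the identity in \emph{ii.)}, and making precise the operadic identity $\gerst\cong\comm\circ\flie$ with its distributive law so that ``the free Gerstenhaber algebra on a cofree Gerstenhaber coalgebra'' genuinely unpacks as $\comm(\flie(\cdots))$ rather than merely being related to it. A secondary subtlety in \emph{i.)} is checking that the cobracket induced on $\cocom(L[1])[-1]$ really does make $\flie$ of it a \emph{Lie coalgebra} (not just the underlying space of one) and that this structure is compatible with the free-Lie-algebra bracket to form a genuine Lie bialgebra — i.e.\ verifying the cocycle/compatibility condition between bracket and cobracket. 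Both of these are, however, ``soft'' in the sense that no hard analysis is involved; the work is in organizing the universal properties correctly, and I would lean on the references \cite{hinich}, \cite{michaelis}, and \cite{vallette:2012} for the parts that are standard operadic Koszul duality, citing them rather than reproving.
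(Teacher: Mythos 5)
Your proposal matches the paper's proof in both structure and substance: part \emph{i.)} is proved by equipping $\cocom(L[1])$ with the Gerstenhaber-coalgebra structure obtained by extending $\delta_L$ via the coLeibniz rule, transferring the cocommutative coproduct into the (cobar-type) differential and the cobracket into the Lie cobracket of $\flie(\cocom(L[1])[-1])$ via the cocycle extension rule, and verifying compatibility of the canonical projection with the cobrackets by a weight-counting argument; part \emph{ii.)} is then the observation that for $L=\colie(\algebra A[1])$, a dg Lie bialgebra by Lemma~\ref{lemma:liebialgcolie} under condition~\eqref{item: Liebial}, this construction reproduces exactly $\gerst(\gerst^\vee(\algebra A))$. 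The only cosmetic difference is that you phrase \emph{ii.)} through the operadic distributive law $\gerst\cong\comm\circ\flie$ and Koszul duality, which the paper deliberately sidesteps by treating the explicit formulas as definitions, but the content is the same.
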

\begin{proof}
	\begin{propositionlist}
		\item The differential $Q\colon \cocom(L[1])\to
		\cocom(L[1])[1]$ comes from the differential $\D_L$
		and the Lie bracket $[\argument, \argument]_L$ in a
		natural way.  The cobracket comes from $\delta_L$ in
		the following way: for $\ell\in L$ we use $\delta_L$
		and extend it to $\cocom(L[1])$ by the coLeibniz
		identity
		\begin{equation}
			\delta_\cocom(a\vee b) =\delta_\cocom(a)\Delta_\sh (b)+
			(-1)^{|a|}\Delta_\sh (a)\delta_\cocom(b)
		\end{equation}
		for $a,b\in \cocom(L[1])$, which is a bi-coderivation
		of the cocommutative coproduct on $\cocom(L[1])$ of
		degree $-1$ turning $\cocom(L[1])$ into a
		Gerstenhaber coalgebra. Now we consider the free Lie
		algebra of $\cocom(L[1])[-1]$ and define the
		differential to be the extension of $Q$ and
		$\Delta_\sh$ as (graded, degree 1) derivations of the
		free Lie bracket.  The Lie cobracket $\delta\colon
		\flie(\cocom(L[1])[-1])\to
		\Anti^2\flie(\cocom(L[1])[-1])$ is the extension of
		the co Gerstenhaber bracket by the rule
		\begin{equation}
			\delta([x,y])
			=
			[\delta(a),b\tensor 1 + 1\tensor b]
			+[a\tensor 1+ 1\tensor a, \delta(b)],
		\end{equation}
		to the free Lie algebra, which gives us automatically
		the compatibility with the Lie bracket.
		
		Finally we need to check that the canonical dg Lie algebra 
		morphism 
		\begin{align*}
			P\colon \flie(\cocom(L[1])[-1])\to L
		\end{align*}
		is compatible with the cobracket.  Recall that the
		morphism $P$ is given by the extension of the
		projection $\cocom^1(L[1])[-1]=L$ as a Lie algebra
		morphism. Note that this immediately implies that 
		\begin{align*}
		P([x_1,[\cdots[x_{k-1},x_k]\cdots])=0
		\end{align*}
		if there exists $j\in \{1,\dots,k\}$, such that 
		 $x_j\in \cocom^{\geq 2}(L[1])$. 
		For the  Gerstenhaber cobracket $\delta_\cocom$, we have 
		by construction 
		$\delta_{\cocom}(\cocom^k(L[1]))\subseteq 
		\bigoplus_{i+j=k-1}\cocom^{i+1}(L[1])\vee
		\cocom^{j+1}(L[1])$ and thus
		\begin{align}
			P\tensor P(\delta([x_1,[\cdots[x_{k-1},x_k]\cdots]))
			=
			0    
		\end{align}
		if there exists $j\in \{1,\dots,k\}$, such that 
		 $x_j\in \cocom^{\geq 2}(L[1])$.  In this case we
		also get by the above discussion
		$\delta_L(P([x_1,[\cdots[x_{k-1},x_k]\cdots])=0$ by
		the definition of $P$. The rest of the statement
		follows automatically.
		\item This is an immediate consequence of the above item,
		since the extension of the cofree Lie cobracket of 
		$\colie(\algebra{A}[1])$ gives the cofree Gerstenhaber cobracket
		on $\cocom(\colie(\algebra{A}[1])[1])$ and thus the
		Gerstenhaber coalgebra on 
		$\cocom(\colie(\algebra{A}[1])[1])[-2]$ equals the one on $\gerst^\vee(\algebra{A}[1])$. 
	\end{propositionlist}
\end{proof}
Let us apply Lemma~\ref{lem:dggerst} to the dg Lie bialgebras
$\mathscr{L}_{C^\bullet(A)}$, $\mathscr{L}_{\multivect^\bullet(A)}$
and $L_A$ to obtain the zig zag
\begin{center}
	\begin{tikzcd}
		\comm(\mathscr{L}_{C^\bullet(A)}[-1]) &
		\comm(L_A[-1])\arrow[l]\arrow[r]&
		\comm(\mathscr{L}_{\multivect^\bullet(A)}[-1])
	\end{tikzcd}
\end{center}
Recall that in \cite{dolgushevetal:2007} a $G_\infty$-zig zag
has been constructed connecting $\multivect^\bullet(A)$ and
$C^\bullet(A)$, via the following diagram
\begin{center}
	\begin{tikzcd}
		C^\bullet(A)\arrow[r, "\chi^{C^\bullet(A)}"] &
		\gerst(\gerst^\vee(C^\bullet(A))) \\
		& \comm ((\flie\cocom (L_A[1])[-1])[-1]) \arrow[u]\arrow[d]\\
		\multivect^\bullet(A)
		& \gerst(\gerst^\vee(\multivect^\bullet(A)))\arrow[l]\\
	\end{tikzcd}
\end{center} 
Here all the $G_\infty$-algebras are actually Gerstenhaber
algebras except $C^\bullet(A)$, while all the arrows are
Gerstenhaber morphism besides $\chi^{C^\bullet(A)}$ which is
the unit of the $G_\infty$-$\gerst^\vee$-adjunction.  Using
Proposition~\ref{prop:dgliebi} and Lemma~\ref{lem:dggerst}, we
obtain
\begin{center}
	\begin{tikzcd}
		C^\bullet(A)\arrow[r, "\chi^{C^\bullet(A)}"] &
		\gerst(\gerst^\vee(C^\bullet(A))) \arrow[r]&
		\comm(\mathscr{L}_{C^\bullet(A)}[-1])\\
		& \comm (\flie(\cocom (L_A[1])[-1])[-1]) \arrow[u]\arrow[d]
		\arrow[r]
		& \comm (L_A[-1]) \arrow[u]\arrow[d]\\
		\multivect^\bullet(A)
		& \gerst(\gerst^\vee(\multivect^\bullet(A)))\arrow[l]
		\arrow[r]
		& \comm(\mathscr{L}_{\multivect^\bullet(A)}[-1])\\
	\end{tikzcd}
\end{center} 
The lowest row can actually be replaced by the morphism
$\comm(\mathscr{L}_{\multivect^\bullet(A)}[-1])
\longrightarrow \multivect^\bullet(A)$, which is given by the
projection $\mathscr{L}_{\multivect^\bullet(A)}[-1] =
\colie(\multivect^\bullet(A)[1])[-1] \longrightarrow
\colie^1(\multivect^\bullet(A)[1])[-1] =
\multivect^\bullet(A)$ extended as a morphism of commutative
algebras to
$\comm(\mathscr{L}_{\multivect^\bullet(A)}[-1])\longrightarrow\multivect^\bullet(A)$. It
turns out that this map is a morphism of Gerstenhaber
algebras.  
Summarizing, we obtain the following theorem, which is basically already contained in \cite{dolgushevetal:2007}.
\begin{theorem}
	\label{thm:zigzag}
	Let $A$ be a commutative algebra over a field of
	characteristic $0$, then there is a natural diagram in the
	category of $G_\infty$-algebras
	\begin{center}
		\begin{tikzcd}
			\comm(\mathscr{L}_{C^\bullet(A)}[-1]) & 
			\comm (L_A[-1]) \arrow[l]\arrow[r]&
			\comm(\mathscr{L}_{\multivect(A)}[-1])\arrow[d]\\
			C^\bullet(A)\arrow[u] & & 
			\multivect^\bullet(A)
		\end{tikzcd}
	\end{center} 
\end{theorem}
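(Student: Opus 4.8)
The plan is to assemble the theorem essentially by stringing together the two general constructions already proven (Proposition~\ref{prop:dgliebi} and Lemma~\ref{lem:dggerst}) together with the zig-zag of $G_\infty$-algebras from \cite{dolgushevetal:2007}, and then checking that every arrow that appears is a morphism in the category of $G_\infty$-algebras. First I would recall that by Lemma~\ref{lemma:liebialgcolie} each of $C^\bullet(A)$, $\multivect^\bullet(A)$ and the intermediate object $L_A$ gives rise to a dg Lie bialgebra, namely $\mathscr{L}_{C^\bullet(A)}$, $\mathscr{L}_{\multivect^\bullet(A)}$ and $L_A$ itself (the latter being a sub-Lie bialgebra of both, using property~\eqref{eq:good}). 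The inclusions $L_A \hookrightarrow \mathscr{L}_{C^\bullet(A)}$ and $L_A \hookrightarrow \mathscr{L}_{\multivect^\bullet(A)}$ are morphisms of dg Lie bialgebras, so applying the functor $\comm(\argument[-1])$ from Lemma~\ref{lem:dggerst} produces the top row
\begin{center}
	\begin{tikzcd}
		\comm(\mathscr{L}_{C^\bullet(A)}[-1]) &
		\comm (L_A[-1]) \arrow[l]\arrow[r]&
		\comm(\mathscr{L}_{\multivect(A)}[-1])
	\end{tikzcd}
\end{center}
as a diagram of dg Gerstenhaber algebras, hence of $G_\infty$-algebras.

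Next I would supply the two vertical legs. For $\multivect^\bullet(A)$ the leg is the map $\comm(\mathscr{L}_{\multivect^\bullet(A)}[-1]) \longrightarrow \multivect^\bullet(A)$ described in the paragraph preceding the theorem: it is the extension, as a morphism of commutative algebras, of the canonical projection $\colie(\multivect^\bullet(A)[1])[-1] \twoheadrightarrow \colie^1(\multivect^\bullet(A)[1])[-1] = \multivect^\bullet(A)$; one checks directly that this is compatible with the Gerstenhaber bracket and differential, so it is a morphism of Gerstenhaber algebras. For $C^\bullet(A)$ the leg is more delicate: it is obtained by composing the unit $\chi^{C^\bullet(A)} \colon C^\bullet(A) \to \gerst(\gerst^\vee(C^\bullet(A)))$ of the $G_\infty$-$\gerst^\vee$-adjunction with the identification, furnished by Proposition~\ref{prop:dgliebi}~(ii), of $\gerst(\gerst^\vee(C^\bullet(A)))$ with $\comm(\flie(\cocom(\colie(C^\bullet(A)[1]))[1])[-1])[-1])$, and then with the canonical projection $\flie(\cocom(L[1])[-1]) \to L$ of Proposition~\ref{prop:dgliebi}~(i) applied to $L = \mathscr{L}_{C^\bullet(A)}$, which lands in $\comm(\mathscr{L}_{C^\bullet(A)}[-1])$. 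Since $\chi^{C^\bullet(A)}$ is a $G_\infty$-morphism and the remaining identifications and projections are morphisms of (dg) Gerstenhaber algebras by the two cited results, the composite $C^\bullet(A) \to \comm(\mathscr{L}_{C^\bullet(A)}[-1])$ is a $G_\infty$-morphism.

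Finally I would observe that naturality is inherited at each stage: Lemma~\ref{lem:dggerst} gives functoriality of $\comm(\argument[-1])$ on dg Lie bialgebras, Lemma~\ref{lemma:liebialgcolie} is natural in the underlying $G_\infty$-algebra, the projections of Proposition~\ref{prop:dgliebi} are canonical, and the unit $\chi$ of an adjunction is automatically natural; composing these gives the asserted naturality of the whole diagram in $A$. The main obstacle I expect is not any single computation but keeping the bookkeeping of shifts straight and, in particular, verifying cleanly that the big intermediate object $\comm(\flie(\cocom(L_A[1])[-1])[-1])$ from the Dolgushev--Tamarkin--Tsygan diagram really does map compatibly both down to $\multivect^\bullet(A)$ and across to $\comm(L_A[-1])$ — i.e. that the two squares coming from \cite{dolgushevetal:2007} and from Proposition~\ref{prop:dgliebi} glue; once that compatibility is in hand, the lowest row of the intermediate diagram can be replaced by the single Gerstenhaber morphism to $\multivect^\bullet(A)$ and the statement follows. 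Most of the genuinely new content has already been isolated in Proposition~\ref{prop:dgliebi}, so at this point the proof is essentially an assembly, and it is reasonable to state that the diagram is "basically already contained in \cite{dolgushevetal:2007}" and leave the routine verifications to the reader.
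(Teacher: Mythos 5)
Your proposal is correct and follows essentially the same route as the paper: the top row is obtained by applying Lemma~\ref{lem:dggerst} to the dg Lie bialgebras $\mathscr{L}_{C^\bullet(A)}$, $L_A$ and $\mathscr{L}_{\multivect^\bullet(A)}$, the leg from $C^\bullet(A)$ is the composite of the adjunction unit $\chi^{C^\bullet(A)}$ with the identifications and projection from Proposition~\ref{prop:dgliebi}, and the leg to $\multivect^\bullet(A)$ is the commutative-algebra extension of the projection onto $\colie^1$. The paper likewise treats the theorem as an assembly of these previously established pieces and the zig-zag of \cite{dolgushevetal:2007}, so no further comparison is needed.
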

\begin{remark}
Note that the zig-zag from Theorem \ref{thm:zigzag} exists for every commutative algebra, but we need extra assumptions to see that all the morphisms are actually quasi-isomorphisms.  
\end{remark}
%

%
%
%
\section{A $\lie{g}$-adapted quasi-inverse} 
\label{sec:gadapted}

The first step of this section consists of introducing the complexes
which contain the data of Hamiltonian actions in the classical and
quantum setting.  Those complexes turn out to have a \emph{curvature}
and, as a second step, we discuss the compatibility of formality maps
with such curvatures.

\subsection{Equivariant Complexes as curved Lie algebras}
Let us consider a Lie group action $\Phi\colon G\times M\to M$. In the
following we use a superscript $G$ to refer to $G$-invariant elements.
\begin{definition}[Equivariant Multivectors]
	\label{def:equiv1}
	The equivariant multivector DGLA is given by the complex
	$ T^\bullet_\lie{g} (M)$ defined by
	\begin{align*}
		T_\lie{g}^k (M)
		=
		\bigoplus_{2i+j=k} (\Sym^i\lie{g}^* \tensor \Secinfty(\Anti^{j+1}TM) )^G
		=
		\bigoplus_{2i+j=k} (\Sym^i\lie{g}^* \tensor T_{\poly}^j(M) )^G,
	\end{align*}
	together with the trivial differential and the following Lie bracket
	\begin{align*}
		[\alpha\tensor X, \beta\tensor Y]_{\lie{g}}
		=
		\alpha\vee \beta\tensor[]   [X,Y]_{\mathrm{SN}} 
	\end{align*}
	for any $\alpha\tensor X, \beta\tensor Y \in  T_\lie{g}^\bullet(M)$.
\end{definition}
Here $[\argument,\argument]_{\mathrm{SN}}$ refers to the usual
Schouten--Nijenhuis bracket on $T_{\poly}(M)$.  Notice that invariance
with respect to the group action means invariance under the
transformations $\Ad_g^*\tensor \Phi_g^*$ for all $g\in G$.
\begin{remark}
	We can interpret this complex in terms of polynomial maps
	$\lie{g}\to T_{\poly}^j(M)$ which are equivariant with respect
	to adjoint and push-forward action.
	Furthermore, we introduce the canonical linear map
	\begin{align}
		\lambda
		\colon 
		\lie{g} \ni \xi \longmapsto \xi_M\in T^0_{\poly}(M),
	\end{align}	 
	where $\xi_M$ denotes the fundamental vector field
	corresponding to the action $\Phi$.  It turns that $\lambda $
	is central and as a consequence we can turn $
	T^\bullet_\lie{g} M$ into a \emph{curved} Lie algebra with
	curvature $\lambda$.
\end{remark}
Now let $(M, \pi)$ be a Poisson manifold. Recall that a momentum map
for the action $\Phi$ is a map $J\colon \lie{g} \to \Cinfty(M)$ such
that
\begin{equation}
	\label{eq:momentummap}
	\xi_M = \{ \argument, J_\xi \}
	\quad
	\text{and}
	\quad
	J_{[\xi, \eta]} 
	=
	\{ J_\xi,  J_\eta \},
\end{equation}
where $\{\argument, \argument\}$ denote the Poisson bracket
corresponding to $\pi$. An action $\Phi$ admitting a momentum map is
called \emph{Hamiltonian}.  In the following we recall a
characterization of Hamiltonian actions in terms of equivariant
multivectors, see \cite{esposito:2022}.
\begin{lemma}
	\label{lem:MChamiltonian}
	The curved Maurer--Cartan elements 
	\begin{align*}
		\mathrm{MC}_\lambda(T_\lie{g}^\bullet (M))=\{ \Pi\in T_\lie{g}^1 (M)\ \mid \ \lambda +\frac{1}{2}[\Pi,\Pi]_\lie{g}=0\}
	\end{align*}
	are equivalent to pairs $(\pi,J)$, where $\pi$ is a 
	$G$-invariant Poisson structure and $J$ is a momentum
	map $J\colon \lie{g} \to T^{-1}_{\poly}(M)$.
\end{lemma}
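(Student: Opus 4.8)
The statement is essentially an unpacking of the curved Maurer--Cartan equation in the explicit coordinates provided by Definition~\ref{def:equiv1}, so the plan is to expand both sides in the $\Sym^i\lie g^*$-grading and match terms. First I would fix notation: a general element $\Pi\in T^1_\lie g(M)$ decomposes, according to the constraint $2i+j=1$, into exactly two pieces, namely a component with $(i,j)=(0,1)$, which is a $G$-invariant bivector field $\pi\in T^1_\poly(M)^G$, and a component with $(i,j)=(1,-1)$, which is a $G$-equivariant linear map $-J\colon\lie g\to T^{-1}_\poly(M)=\Cinfty(M)$ (the sign is a convention; the equivariance is built into the superscript $G$). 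So $\Pi$ is literally the pair $(\pi,-J)$, and the content of the lemma is that $\mathrm{MC}_\lambda(\Pi)=0$ holds if and only if $\pi$ is Poisson and $J$ satisfies \eqref{eq:momentummap}.

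Next I would compute $\lambda+\tfrac12[\Pi,\Pi]_\lie g$ and sort the result by the number of $\lie g^*$-factors. Using the bracket formula $[\alpha\tensor X,\beta\tensor Y]_\lie g=\alpha\vee\beta\tensor[X,Y]_{\mathrm{SN}}$ and bilinearity, $\tfrac12[\Pi,\Pi]_\lie g$ has three contributions: the $\Sym^0$-part $\tfrac12[\pi,\pi]_{\mathrm{SN}}$, the $\Sym^1$-part coming from the cross term $[\pi,-J]_{\mathrm{SN}}$ (a linear map $\lie g\to T^0_\poly(M)$, i.e.\ $\xi\mapsto[\pi,-J_\xi]_{\mathrm{SN}}=-\{\argument,J_\xi\}$ up to sign, the Hamiltonian vector field of $-J_\xi$), and the $\Sym^2$-part $\tfrac12[-J,-J]_{\mathrm{SN}}=\tfrac12[J,J]_{\mathrm{SN}}$, which on $\xi\vee\eta$ evaluates to the Schouten bracket of two functions, i.e.\ the Poisson bracket $\{J_\xi,J_\eta\}$ (again up to the fixed sign convention). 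The curvature term $\lambda$ sits in $\Sym^1$ and sends $\xi\mapsto\xi_M$. Since the grading is a direct sum, the single equation $\mathrm{MC}_\lambda(\Pi)=0$ is equivalent to the simultaneous vanishing of all three graded components.

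Then I read off the three equations. The $\Sym^0$-component gives $[\pi,\pi]_{\mathrm{SN}}=0$, i.e.\ $\pi$ is a Poisson structure, which together with $\pi\in T^1_\poly(M)^G$ says $\pi$ is $G$-invariant Poisson. The $\Sym^1$-component gives $\lambda+[\pi,-J]_{\mathrm{SN}}=0$, i.e.\ for every $\xi$, $\xi_M=\{\argument,J_\xi\}$ after fixing signs; this is precisely the first half of \eqref{eq:momentummap}. The $\Sym^2$-component gives $\tfrac12[J,J]_{\mathrm{SN}}=0$ as a symmetric bilinear form; but one must be slightly careful, because this naive reading yields $\{J_\xi,J_\eta\}=0$, whereas \eqref{eq:momentummap} wants $\{J_\xi,J_\eta\}=J_{[\xi,\eta]}$. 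The resolution, which I would spell out, is that the superscript $G$ (equivariance of the map $\lie g\to\Cinfty(M)$) is exactly what converts the $\Sym^2$-condition into the equivariance/cocycle identity: invariance of $J$ under $\Ad^*\tensor\Phi^*$ is infinitesimally $\mathcal L_{\xi_M}J_\eta=J_{[\xi,\eta]}$, and combined with the already-established relation $\mathcal L_{\xi_M}=\{\argument,J_\xi\}$ and the Leibniz rule this upgrades $[J,J]_{\mathrm{SN}}$ acting on $\xi\vee\eta$ to $\{J_\xi,J_\eta\}-J_{[\xi,\eta]}$, whose vanishing is the second half of \eqref{eq:momentummap}. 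Conversely, given a Hamiltonian $(\pi,J)$ one runs the three computations backwards to recover $\mathrm{MC}_\lambda(\Pi)=0$.

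The main obstacle is precisely this last bookkeeping point: making the $\Sym^2$-graded piece of the curved Maurer--Cartan equation correctly encode the equivariant cocycle identity $J_{[\xi,\eta]}=\{J_\xi,J_\eta\}$ rather than the naive $\{J_\xi,J_\eta\}=0$, which requires using $G$-invariance of $\Pi$ together with the $\Sym^1$-equation, and keeping all signs consistent with the stated bracket and shift conventions. Everything else is a routine expansion. One can also simply cite \cite{esposito:2022}, where this characterization is proved, and present the above only as a sketch.
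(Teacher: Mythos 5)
The paper itself does not prove this lemma; it only recalls it from \cite{esposito:2022}, so there is no in-text argument to compare against. Your overall strategy --- decompose $\Pi\in T^1_{\lie g}(M)$ according to $2i+j=1$ into a $G$-invariant bivector $\pi$ and an equivariant map $J\colon\lie g\to\Cinfty(M)$, then split the curved Maurer--Cartan equation by symmetric degree in $\lie g^*$ --- is the right one, and your treatment of the $\Sym^0$ and $\Sym^1$ components (Jacobi identity for $\pi$, and $\xi_M=\{\argument,J_\xi\}$ from cancelling the curvature $\lambda$ against $[\pi,J]_{\mathrm{SN}}$) is correct.

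There is, however, one genuine error in the $\Sym^2$ discussion. You assert that $\tfrac12[J,J]_{\mathrm{SN}}$ evaluated on $\xi\vee\eta$ ``evaluates to the Schouten bracket of two functions, i.e.\ the Poisson bracket $\{J_\xi,J_\eta\}$.'' This is false: the Schouten bracket of two functions is \emph{not} the Poisson bracket (the Poisson bracket requires the bivector, $\{f,g\}=\pi(\D f,\D g)$); it lands in $T^{-2}_\poly(M)=\Secinfty(\Anti^{-1}TM)=0$ and hence vanishes identically. Equivalently, $T^2_{\lie g}(M)$ has no $\Sym^2$-summand at all, since $2i+j=2$ with $j\geq -1$ only allows $(i,j)\in\{(0,2),(1,0)\}$. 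So there is no ``$\Sym^2$-condition'' to be resolved or converted; that component of the equation is vacuous. The cocycle identity $\{J_\xi,J_\eta\}=J_{[\xi,\eta]}$ must therefore come \emph{entirely} from the $G$-equivariance built into the superscript $G$ (infinitesimally $\Lie_{\xi_M}J_\eta=J_{[\xi,\eta]}$, for $G$ connected) combined with the already established $\Sym^1$-equation $\xi_M=\{\argument,J_\xi\}$. You do in fact invoke exactly this mechanism at the end, so the proof is repairable by simply deleting the incorrect identification of the Schouten bracket of functions with the Poisson bracket and presenting the equivariance argument as the sole source of the second half of \eqref{eq:momentummap}; but as written the intermediate claim is wrong and would not survive the computation you propose to carry out.
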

In a similar fashion, we recall the definition of equivariant
polydifferential operators. 
\begin{definition}[Equivariant Hochschild cochains]
	\label{def:equiv2}
	The DGLA of equivariant multidifferential operators is given by
        the complex $D^\bullet_{\lie{g}}(M)$ defined as
	\begin{align*}
		D^k_{\lie{g}}(M)
		= 
		\bigoplus_{2i+j=k} (\Sym^i\lie{g}^* \tensor D_{\poly}^j(M))^G.
	\end{align*}
	The DGLA structure maps are induced by interpreting
	$D^\bullet_{\lie{g}}(M)$ as equivariant polynomial maps \\$\lie{g} \to
	D_{\poly}(M)$. More precisely, we have
	\begin{align*}
		(\partial^\lie{g} D)(\xi)
		=
		\partial(D(\xi))
		=
		[\mu,D(\xi)]
	\end{align*}
	and
	\begin{align*}
		[D_1,D_2]_\lie{g}(\xi)
		=
		[D_1(\xi),D_2(\xi)],
	\end{align*} 
	where we denote by $\partial $ and $[\argument,\argument]$ the
	usual structure maps of the multidifferential operators DGLA
	and $\mu$ the pointwise multiplication of $\Cinfty(M)$.
\end{definition}
Let $M$ be now equipped with a
$G$-invariant star product $\star$.  Recall that a map $H\colon
\lie{g} \to \Cinfty(M)[[\hbar]]$ is called a \emph{quantum momentum map}
if
\begin{align*}
	\Lie_{\xi_M}
	=
	-\frac{1}{\hbar}[H(\xi),-]_\star 
	\ \text{ and }\ 
	\frac{1}{\hbar}[H(\xi),H({\eta})]_\star
	=
	H([\xi,\eta]).
\end{align*}
%
As in the equivariant multivector field case, we interpret quantum
momentum maps as (curved) Maurer--Cartan elements. With a slight abuse
of notation, we define
\begin{align*}
	\lambda
	\colon
	\lie{g} \ni \xi \longmapsto \Lie_{\xi_M}\in D^0_{\poly}(M),
\end{align*}	
and see that $\lambda \in C^2_{\lie{g}}(M)$ is central and 
moreover $\partial^\lie{g} \lambda = 0$. This means we can 
see $C^\bullet_\lie{g}(M)$ as a curved Lie algebra with 
the above structures and curvature $\lambda$. 
The proof of the following Lemma can be found in \cite{esposito:2023}.
\begin{lemma}\label{Lem: MCtoStar} 
	A curved formal Maurer-Cartan element $\Pi\in \hbar
	C^1_{\lie{g}}(M)[[\hbar]]$, i.e. an element $\Pi$ satisfying
	\begin{align}
		\label{eq:MCquantum}
		\hbar^2\lambda + \partial^\lie{g} \Pi + \frac{1}{2}[\Pi,\Pi]_\lie{g}
		=
		0,
	\end{align}
	is equivalent to a pair $(m_\star,H)$, where $m_\star$ defines
	a formal star product via $\mu+\hbar m_\star$ with a quantum
	momentum map $H\colon \lie{g} \to \Cinfty(M)[[\hbar]]$.
\end{lemma}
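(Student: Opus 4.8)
The plan is to exploit the internal bigrading of $C^\bullet_{\lie g}(M)$ coming from $C^k_{\lie g}(M)=\bigoplus_{2i+j=k}(\Sym^i\lie g^*\tensor D^j_\poly(M))^G$ and to split the curved Maurer--Cartan equation~\eqref{eq:MCquantum} into its homogeneous components, exactly mirroring the classical statement of Lemma~\ref{lem:MChamiltonian}. In degree $1$ the only solutions of $2i+j=1$ with $D^j_\poly(M)\neq 0$ are $(i,j)=(0,1)$ and $(i,j)=(1,-1)$, and since $D^{-1}_\poly(M)=\Cinfty(M)$, every $\Pi\in\hbar C^1_{\lie g}(M)[[\hbar]]$ decomposes uniquely as $\Pi=\Pi_0+\Pi_1$ with $\Pi_0\in\hbar\big(D^1_\poly(M)\big)^G[[\hbar]]$ a bidifferential operator and $\Pi_1\in\hbar\big(\lie g^*\tensor\Cinfty(M)\big)^G[[\hbar]]$, read as $\hbar$ times a linear map $\lie g\to\Cinfty(M)[[\hbar]]$. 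We set $m_\star:=\hbar^{-1}\Pi_0$ and $H:=-\hbar^{-1}\Pi_1$, the sign being dictated by the conventions in the definition of a quantum momentum map; this assignment $\Pi\mapsto(m_\star,H)$ will be the claimed bijection, and the work is to check that it matches Maurer--Cartan elements with admissible pairs in both directions.

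First I would record the bidegrees: $\lambda\in(\lie g^*\tensor D^0_\poly(M))^G$ has bidegree $(1,0)$, $\Pi_0$ has bidegree $(0,1)$, $\Pi_1$ has bidegree $(1,-1)$; since $\partial^{\lie g}$ raises the $D_\poly$-degree $j$ by one and leaves $i$ fixed, while $[\argument,\argument]_{\lie g}$ adds the respective bidegrees, Equation~\eqref{eq:MCquantum} is equivalent to its $(0,2)$-, $(1,0)$- and $(2,-2)$-components. The $(0,2)$-component is $\partial^{\lie g}\Pi_0+\tfrac12[\Pi_0,\Pi_0]_{\lie g}=\tfrac12[\mu+\Pi_0,\mu+\Pi_0]_G=0$, i.e. associativity of $\star:=\mu+\hbar m_\star$; together with $G$-invariance of $\Pi_0$ this says $\star$ is a $G$-invariant star product. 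The $(2,-2)$-component is $\tfrac12[\Pi_1,\Pi_1]_{\lie g}=0$, which holds identically because the Gerstenhaber bracket vanishes on $C^0(\Cinfty(M))=\Cinfty(M)$. Evaluated on $\xi\in\lie g$, the $(1,0)$-component reads $\hbar^2\Lie_{\xi_M}+[\mu+\Pi_0,\Pi_1(\xi)]_G=0$; since for a function $g$ one has $[\mu+\Pi_0,g]_G=\pm\big(g\star(\argument)-(\argument)\star g\big)$, this is equivalent to $\Lie_{\xi_M}=-\tfrac1\hbar[H(\xi),\argument]_\star$, the first defining identity of a quantum momentum map.

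The remaining identity $\tfrac1\hbar[H(\xi),H(\eta)]_\star=H([\xi,\eta])$ is not encoded in the Maurer--Cartan equation but in the $G$-invariance built into $C^\bullet_{\lie g}(M)$: differentiating invariance of $\Pi_1$ under $\Ad^*_g\tensor\Phi^*_g$ gives $(\mathrm{ad}^*_\eta\tensor\id+\id\tensor\Lie_{\eta_M})\Pi_1=0$, which unwinds to $\Lie_{\eta_M}(H(\xi))=H([\eta,\xi])$; plugging in the identity just obtained, $\Lie_{\eta_M}(H(\xi))=-\tfrac1\hbar[H(\eta),H(\xi)]_\star$, and rearranging with the antisymmetry of the Lie bracket and of $[\argument,\argument]_\star$ gives exactly the equivariance of $H$. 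For the converse I would run the same equivalences backwards: given a $G$-invariant $\star=\mu+\hbar m_\star$ with a quantum momentum map $H$, the two momentum-map identities combine to $\Lie_{\eta_M}(H(\xi))=H([\eta,\xi])$, so $\Pi:=\hbar m_\star-\hbar H$ lies in $\hbar C^1_{\lie g}(M)[[\hbar]]$, and the three homogeneous components of~\eqref{eq:MCquantum} hold by associativity of $\star$ and the first momentum-map identity; the two constructions are clearly mutually inverse.

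The delicate but routine part is the bookkeeping: keeping the powers of $\hbar$ straight (the one-step shift relating $\Pi_0,\Pi_1$ to $m_\star,H$) and pinning down all signs from the Gerstenhaber bracket, the fundamental-vector-field convention and the coadjoint action. The only genuinely conceptual point is to recognize that the equivariance condition on the quantum momentum map corresponds to the $G$-invariance constraint defining $C^\bullet_{\lie g}(M)$, and not to any component of the curved Maurer--Cartan equation.
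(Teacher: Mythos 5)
The paper does not actually prove this lemma; it defers to \cite{esposito:2023}. Your argument is the standard one and is correct: splitting $\Pi$ by the internal bidegree $(i,j)$ with $2i+j=1$, reading the $(0,2)$-component as associativity of $\mu+\hbar m_\star$, the $(1,0)$-component as the first quantum-momentum-map identity, observing that the $(2,-2)$-component is vacuous since the Gerstenhaber bracket of two functions vanishes, and recovering the second identity from the $(\Ad_g^*\tensor\Phi_g^*)$-invariance built into $C^\bullet_{\lie g}(M)$ rather than from the Maurer--Cartan equation is exactly the intended mechanism, and your identification of that last point as the only conceptual step is apt. The one caveat worth recording is in the converse direction: the quantum-momentum-map axioms only yield \emph{infinitesimal} equivariance of $H$, so concluding that $\Pi_1$ is genuinely $G$-invariant (hence lies in $C^1_{\lie g}(M)$) requires $G$ connected or an explicit equivariance hypothesis on $H$ --- a convention the paper leaves implicit, as it does already in the classical Lemma~\ref{lem:MChamiltonian}.
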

Such a pair consisting of an invariant star product $\star = \mu +
\hbar m_\star$ and a quantum momentum map $H$ is also called
\emph{equivariant star product}.  Note that we always assume
the filtration by powers of $\hbar$.

\subsection{Compatibility with Curvatures}

The main goal of this paper consists in constructing a one-to-one
correspondence between equivalence classes of $G$-invariant star
product with quantum momentum maps and equivalence classes of
$G$-invariant formal Poisson structures with formal momentum maps. In
other words, we aim at constructing an $L_\infty$-morphism
\begin{equation*}
	F^\lie{g} \colon
	T_{\mathfrak{g}}(M) 
	\longrightarrow 
	D_{\mathfrak{g}}(M)
\end{equation*}
of the \emph{curved} Lie algebras defined above which is also an
homotopy equivalence.  The idea is to choose a $G$-invariant formality
map $F\colon T_\poly(M)\to D_\poly(M)$, which is compatible with
curvatures i.e. $F_1(\xi_M) = \Lie_{\xi_M}$ for all $\xi\in \lie{g}$,
and extend it to an $L_\infty$-morphism $F^\lie{g}\colon
T_\lie{g}(M)\to D_\lie{g}(M)$ by setting:
\begin{align}
	\label{Eq: ExtForm}
	F^\lie{g}_k(P_1\tensor X_1\vee\dots\vee P_k\tensor X_k)=P_1\vee\dots\vee P_k\tensor F_k(X_1\vee\dots \vee X_k).
\end{align}	   
The map $F^\lie{g}$ defined above is obviously compatible with
corresponding brackets and differentials.  Nevertheless, the
compatibility with the curvature is more delicate.  First, let us
start with the following general observation.
\begin{proposition}
	\label{prop:CompatibilityMorphismCurvatur}
	Let $F$ be an $L_\infty$-morphism between 
	$L_\infty$-algebras $(\mathscr{L},Q)$ and
	$(\widetilde{\mathscr{L}},\widetilde{Q})$. In addition, let
	$Q_0 \in \mathscr{L}[1]^1, \widetilde{Q}_0 \in
	\widetilde{\mathscr{L}}[1]^1$ be central with
	respect to the $L_\infty$-structures $Q$ and $\widetilde{Q}$,
	resp. Then, the following holds:
	\begin{propositionlist}
	    \item The coderivation $Q_0\colon \ucocom(\mathscr{L}[1])\to \ucocom(\mathscr{L}[1])$ defined by 
	    	\begin{align*}
	    	Q'(x_1\vee\cdots\vee x_k):= Q(x_1\vee \cdots \vee x_k)+ Q_0\vee x_1\vee \cdots\vee x_k
	    	\end{align*}
	    gives $\mathscr{L}$ the structure of a curved $L_\infty$-algebra with $Q'(1)=Q_0$.
		\item The structure maps of $F$
		induces a morphism of curved $L_\infty$-algebras if and only
		if
		\begin{equation}\label{Eq:flattocurved}
			F_1(Q_0) 
			=
			\widetilde{Q}_0 
			\quad \quad \text{ and } \quad \quad 
			F_k(Q_0\vee \argument)
			=
			0
			\quad \quad \forall \; k>1.
		\end{equation}
		\item Given two $L_\infty$-morphisms $F_0,F_1\colon (\mathscr{L},Q)\to (\widetilde{\mathscr{L}},\widetilde{Q})$ which are 
		homotopic via the equivalence $(F_t,\lambda_t)$, such that $F_t$ fulfills the conditions 
		\eqref{Eq:flattocurved} for all $t$ and 
		\begin{align}
			(\lambda_t)_k(Q_0\vee\argument)=0
		\end{align}
		for all $k\geq 1$, then the corresponding curved $L_\infty$-morphisms are also homotopic. 
	\end{propositionlist}
\end{proposition}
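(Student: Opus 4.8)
The plan is to carry everything out at the level of the cofreely cogenerated coalgebras and to set $Q'=Q+\mathbf{Q}_0$, where $\mathbf{Q}_0$ denotes the degree $+1$ coderivation of $\ucocom(\mathscr{L}[1])$ whose only non-vanishing Taylor component is the $0$-ary one, equal to $Q_0$, so that $\mathbf{Q}_0(x_1\vee\cdots\vee x_k)=Q_0\vee x_1\vee\cdots\vee x_k$; likewise $\widetilde{Q}'=\widetilde{Q}+\widetilde{\mathbf{Q}}_0$ on the target. For the first item, $Q'$ is then manifestly a degree $+1$ coderivation with $Q'(1)=Q(1)+Q_0=Q_0$, so only $(Q')^2=0$ remains to be checked. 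Expanding, $(Q')^2=Q^2+[Q,\mathbf{Q}_0]+\mathbf{Q}_0^2$: here $Q^2=0$ because $Q$ is an $L_\infty$-structure, and $\mathbf{Q}_0^2=0$ since $\mathbf{Q}_0^2(x_1\vee\cdots\vee x_k)=Q_0\vee Q_0\vee x_1\vee\cdots\vee x_k$ while $Q_0\vee Q_0=0$ ($Q_0$ having odd degree in $\mathscr{L}[1]$ and $\operatorname{char}\mathbb{K}=0$). Finally $[Q,\mathbf{Q}_0]$ is again a coderivation, hence it vanishes once its projection to the cogenerators $\mathscr{L}[1]$ does; evaluated on $x_1\vee\cdots\vee x_n$, the summand $\mathbf{Q}_0Q$ contributes nothing (its image lies in $\bigvee^{\ge 2}$), while $Q\mathbf{Q}_0$ contributes $Q^1_{n+1}(Q_0\vee x_1\vee\cdots\vee x_n)$, which vanishes for every $n\ge 0$ exactly by the centrality of $Q_0$ (the case $n=0$ reading $Q^1_1(Q_0)=0$).

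For the second item I would keep $F$ as the same coalgebra morphism; since it already satisfies $FQ=\widetilde{Q}F$ and $F(1)=1$, it is a morphism of the curved $L_\infty$-algebras $(\mathscr{L},Q')\to(\widetilde{\mathscr{L}},\widetilde{Q}')$ if and only if $F\mathbf{Q}_0=\widetilde{\mathbf{Q}}_0 F$. Both sides are $(F,F)$-coderivations, each being a coderivation composed with the comorphism $F$, hence the identity holds iff it holds after projecting to the cogenerators. On $x_1\vee\cdots\vee x_n$ the left side gives $F_{n+1}(Q_0\vee x_1\vee\cdots\vee x_n)$, and the right side gives the $\mathscr{L}[1]$-component of $\widetilde{Q}_0\vee F(x_1\vee\cdots\vee x_n)$, i.e.\ $\widetilde{Q}_0$ when $n=0$ (as $F(1)=1$) and $0$ when $n\ge 1$ (as $F$ maps positive symmetric degrees into $\bigvee^{\ge 1}$). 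Comparing these, the identity amounts precisely to $F_1(Q_0)=\widetilde{Q}_0$ together with $F_k(Q_0\vee\argument)=0$ for all $k>1$, that is, to~\eqref{Eq:flattocurved}.

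For the third item I would reinterpret the given homotopy rather than construct a new one. All Taylor components of $F_t$, of $\lambda_t$ and of $\tfrac{\D}{\D t}F_t$ vanish on $1$, so the same families of Taylor components also define a path $F^{c}_t$ of curved $L_\infty$-morphisms (well defined by the second item, since each $F_t$ obeys~\eqref{Eq:flattocurved}) together with a degree $0$ path $\lambda^{c}_t$ in the curved convolution $L_\infty$-algebra. The curved convolution structure built from $Q',\widetilde{Q}'$ differs from the flat one only in that its curvature is the $0$-ary map $1\mapsto\widetilde{Q}_0$, which does not appear in the gauge equation $\tfrac{\D}{\D t}\pi_t=\sum_{k\ge 0}\tfrac{1}{k!}Q^1_{k+1}(\pi_t^{\vee k}\vee\lambda_t)$ at all, and in that its codifferential acquires one extra summand of the form $\phi\mapsto\pm\,\phi\circ\mathbf{Q}_0$, the higher brackets being literally unchanged because $(\widetilde{Q}')^1_n=\widetilde{Q}^1_n$ for $n\ge 2$ and because the higher brackets of a convolution algebra see only the target structure. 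In the gauge equation this extra summand is only ever applied to $\lambda^{c}_t$, and $(\lambda^{c}_t\circ\mathbf{Q}_0)(x_1\vee\cdots\vee x_n)=(\lambda_t)_{n+1}(Q_0\vee x_1\vee\cdots\vee x_n)$ vanishes by the hypothesis $(\lambda_t)_k(Q_0\vee\argument)=0$. Hence the gauge equation for the curved convolution structure reduces term by term to the one for the flat structure (which holds by assumption) on $\bigvee^{\ge 1}$, and to $0=0$ on $\bigvee^0$; together with $F^{c}_t|_{t=i}=F^{c}_i$ and the fact that all these curved paths still vanish on $1$, this exhibits $F^{c}_0$ and $F^{c}_1$ as homotopic curved $L_\infty$-morphisms.

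The first two items are short diagram chases on cogenerators. The step I expect to need the most care is the bookkeeping in the third: pinning down exactly how the curved convolution $L_\infty$-algebra differs from the flat one and verifying that every discrepancy is absorbed — either because the curvature is absent from the gauge equation, or by the standing hypothesis $(\lambda_t)_k(Q_0\vee\argument)=0$. A minor subtlety in the first item is that one uses centrality of $Q_0$ also in the degenerate case $n=0$, where it reads $Q^1_1(Q_0)=0$.
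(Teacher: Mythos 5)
Your proposal is correct and follows essentially the same route as the paper's proof: item \emph{i.)} is the same expansion of $(Q')^2$ on cogenerators using centrality, item \emph{ii.)} is the same projection of $FQ'=\widetilde{Q}'F$ onto $\widetilde{\mathscr{L}}[1]$ (split into the $n=0$ and $n\geq 1$ cases), and item \emph{iii.)} is the same observation that the flat and curved convolution structures differ only by the term $\pm\,\phi\circ\mathbf{Q}_0$, which is killed on $\lambda_t$ by hypothesis. The one caveat you correctly flag — that item \emph{i.)} also needs $Q^1_1(Q_0)=0$ — is present in the paper as well (its proof invokes ``$Q_0$ is closed''), but note that this is strictly an extra hypothesis rather than the $n=0$ case of the paper's definition of central, which only requires $n\ge 1$.
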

\begin{proof} 
	Let us denote the induced curved $L_\infty$-structures $Q'$ on
	$\mathscr{L}$ and $\widetilde{Q}'$ on
	$\widetilde{\mathscr{L}}$, respectively.
	\begin{propositionlist}
        \item The first part follows, since for $x_1,\dots,x_k\in \mathscr{L}[1]$, we have 
        	\begin{align*}
        	Q'\circ Q'(x_1\vee \cdots \vee x_k)&= Q^2(x_1\vee \cdots x_k) + Q(Q_0\vee x_1\vee \cdots\vee x_k)\\&
        	+Q_0\vee Q(x_1\vee \cdots \vee x_k)
	    	+ Q_0\vee Q_0\vee x_1\vee \cdots \vee x_k.
			\end{align*}        			
		The first and the last term vanish canonically and $Q(Q_0\vee x_1\vee \cdots\vee x_k)=
		-Q_0 \vee Q(x_1\vee \cdots \vee x_k)$, since $Q_0$ is closed, central and of degree $+1$.
		\item 	One only has to check if $F$ is compatible with the
		codifferentials $Q'$ and $\widetilde{Q}'$, where one gets for
		$v_1\vee \cdots \vee v_n$ with $ v_i \in \mathscr{L}[1]^{k_i}$
		and $n \neq 0$
		\begin{align*}
			\pr_{\widetilde{\mathscr{L}}[1]} \circ F \circ Q' (v_1 \vee \cdots \vee v_n)
			& =
			\pr_{\widetilde{\mathscr{L}}[1]} \circ F ( Q_0 \vee v_1 \vee \cdots \vee v_n) 
			+ Q(v_1\vee \cdots \vee v_n))  \\
			& =
			F_{n+1}^1( Q_0 \vee v_1 \vee \cdots \vee v_n)  
			+ \pr_{\widetilde{\mathscr{L}}[1]} \circ \widetilde{Q}\circ 
			F (v_1\vee \cdots \vee v_n) \\
			& \stackrel{!}{=}
			\pr_{\widetilde{\mathscr{L}}[1]} \circ \widetilde{Q}' \circ 
			F(v_1\vee \cdots \vee v_n).
		\end{align*}
		In addition, one has
		\begin{equation*}
			\widetilde{Q}_0
			=
			\widetilde{Q}' \circ F(1)
			\stackrel{!}{=}
			F \circ Q'(1)
			=
			F_1(Q_0),
		\end{equation*}
		which directly yields the above identities.
		\item 	Let now $(F_t,\lambda_t)$ be a homotopy equivalence. In other words, we have that
		$F_t\in \mathrm{Hom}(\cocom\mathscr{L}[1],\mathscr{L}')$ is a path of Maurer-Cartan elements, such that 
		\begin{align*}
			\frac{\D}{\D t} F_t=q^{F_t}(\lambda_t),
		\end{align*}
		where $q$ is the $L_\infty$-structure of the
                convolution $L_\infty$-structure induced by $Q$ and
                $\tilde{Q}$, see \cite{kraft:2024} for details.
		%
		Note that $F_t$ induces by assumption a path of curved
                $L_\infty$-morphisms, so it is a path of curved
                Maurer-Cartan elements in the curved convolution
                $L_\infty$-algebra
                $(\mathrm{Hom}(\cocom\mathscr{L}[1],\mathscr{L}'),\tilde{q})$
                induced by $\widetilde{Q}$ and $\widetilde{Q'}$. Now
                we only need to check that
		\begin{align*}
			\frac{\D}{\D t} F_t=\tilde{q}^{F_t}(\lambda_t).
		\end{align*}
		By definition $q$ and $\tilde{q}$ only differ by 
		$(q(F)-\tilde{q}(F))(x)=\pm F (Q_0\vee x)$, 
		which vanishes on $\lambda_t$ by assumption and the claim is proven. 
	\end{propositionlist}
\end{proof}
We can immediately conclude that the extended formality from
Eq.~\eqref{Eq: ExtForm} is compatible with curvature if and only if
\begin{align*}
	F_{k+1}^1(\lambda\vee\argument)=0 
\end{align*}
for any $k\geq 1$, which leads us to the following characterization.
\begin{corollary}
	Any $G$-invariant $L_\infty$-morphism $F\colon T_\poly(M)\to
	D_\poly(M)$ with $F_1(\xi_M)=\Lie_{\xi_M}$ for all $\xi\in
	\lie{g}$ can be lifted to a curved $L_\infty$-morphism
	$F^\lie{g}\colon T_\lie{g}(M)\to D_\lie{g}(M)$ if and only if
	\begin{align*}
		F_k^1(\xi_M\vee\argument)=0
	\end{align*}
	for all $\xi\in \lie{g}$. 
\end{corollary}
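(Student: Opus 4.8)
The plan is to read this off directly from Proposition~\ref{prop:CompatibilityMorphismCurvatur}. First I would identify the two curved Lie algebras in the statement as the ones produced by the first item of that proposition. Namely, the graded vector spaces $\bigoplus_{2i+j=\bullet}(\Sym^i\lie{g}^*\tensor T_\poly^j(M))^G$ and $\bigoplus_{2i+j=\bullet}(\Sym^i\lie{g}^*\tensor D_\poly^j(M))^G$ carry the \emph{non-curved} $L_\infty$-structures $(0,[\argument,\argument]_\lie{g})$ and $(\partial^\lie{g},[\argument,\argument]_\lie{g})$, and adjoining the central degree-$1$ (in the shift) elements $\lambda\colon\xi\mapsto\xi_M$ and $\lambda\colon\xi\mapsto\Lie_{\xi_M}$ respectively — both already observed above to be central — recovers $T_\lie{g}(M)$ and $D_\lie{g}(M)$ with their curvatures via the first item of the proposition.

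Next I would check that $F^\lie{g}$ as defined in \eqref{Eq: ExtForm} is a genuine $L_\infty$-morphism of these underlying non-curved $L_\infty$-algebras. Since $F$ is $G$-invariant, the assignment \eqref{Eq: ExtForm} descends to the spaces of invariants; there it is nothing but $F$ combined with the identity on the symmetric-algebra factors, so it is a counital coalgebra morphism, and it intertwines the structure maps because the higher brackets of $T_\lie{g}(M)$ and $D_\lie{g}(M)$ act nontrivially only on the $T_\poly$- resp.\ $D_\poly$-factor while the $\Sym\lie{g}^*$-factors are merely multiplied along. This is exactly the content of the remark that $F^\lie{g}$ is compatible with brackets and differentials, and it is the only step where any care is needed: one must make sure the extension is well defined on $G$-invariants and that it respects the higher brackets, but this is essentially bookkeeping given the ``tensor with the identity'' nature of the construction.

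Finally I would apply the second item of Proposition~\ref{prop:CompatibilityMorphismCurvatur} with $Q_0=\widetilde{Q}_0=\lambda$ and $F=F^\lie{g}$: the lift to a curved $L_\infty$-morphism exists if and only if $F^\lie{g}_1(\lambda)=\lambda$ and $F^\lie{g}_k(\lambda\vee\argument)=0$ for all $k>1$, cf.\ \eqref{Eq:flattocurved}. Choosing a basis $\{e_a\}$ of $\lie{g}$ with dual basis $\{e^a\}$ one has $\lambda=\sum_a e^a\tensor (e_a)_M$, hence $F^\lie{g}_1(\lambda)=\sum_a e^a\tensor F_1((e_a)_M)=\sum_a e^a\tensor \Lie_{(e_a)_M}=\lambda$ by the hypothesis $F_1(\xi_M)=\Lie_{\xi_M}$, so the first condition is automatic. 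Expanding the second condition with \eqref{Eq: ExtForm} gives $F^\lie{g}_k(\lambda\vee (P_2\tensor X_2)\vee\cdots\vee (P_k\tensor X_k))=\sum_a e^a\vee P_2\vee\cdots\vee P_k\tensor F_k((e_a)_M\vee X_2\vee\cdots\vee X_k)$, which vanishes for all arguments precisely when $F_k((e_a)_M\vee\argument)=0$ for every $a$, i.e.\ when $F_k^1(\xi_M\vee\argument)=0$ for all $\xi\in\lie{g}$ and all $k>1$ (the case $k=1$ being the hypothesis). This is the asserted equivalence, and no genuine obstacle arises beyond the translation just described.
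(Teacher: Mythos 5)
Your proposal is correct and follows the same route as the paper: the paper likewise defines $F^\lie{g}$ by \eqref{Eq: ExtForm}, notes its compatibility with the brackets and differentials, and then reads the stated condition directly off Proposition~\ref{prop:CompatibilityMorphismCurvatur}\,\textit{(ii)} applied with $Q_0=\widetilde{Q}_0=\lambda$. Your basis expansion of $\lambda$ just makes explicit the translation that the paper leaves implicit.
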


\subsection{$\lie g$-adapted $L_\infty$-algebras}

From the above discussion, it is now evident that the compatibility
between $L_\infty$ morphisms and the curvatures coming from the action
is crucial. 
Given its importance, we turn it into a definition and introduce the
corresponding category.
\begin{definition}[$\lie g$-adapted $L_\infty$-algebras]
	Let $\lie{g}$ be a Lie algebra. 
	\begin{definitionlist}
		\item A $\lie{g}$-adapted $L_\infty$-algebra is a pair
		$(\algebra L^\bullet, \phi)$ consisting of a
		$L_\infty$-algebra $(\algebra{L}^\bullet,
		Q_\algebra{L})$ and a linear map $\phi\colon
		\lie{g}\to \algebra{L}^0$ such that
		\begin{equation}
			Q_{\algebra{L},k+1}^1
			(\phi(\xi),x_1,\dots,x_k)
			=
			0
		\end{equation}
		for all $\xi
		\in \lie g$, $x_i\in\algebra{L}[1]$, $k \neq 1$ and 
		\begin{align}
			Q_{\algebra{L},2}^1(\phi(\xi), \phi(\eta))
			=
			\phi([\xi,\eta])
		\end{align}
		for $\xi, \eta \in \lie g$.
		\item A morphism of $\lie{g}$-adapted
		$L_\infty$-algebras $(\algebra L^\bullet, \phi)$ and
		$(\algebra H^\bullet, \psi)$ is a
		$L_\infty$-morphism $F\colon \algebra L^\bullet \to
		\algebra H^\bullet$ such that
		\begin{align}
			F^1_{k+1}
			(\phi(\xi),x_1,\dots,x_k)
			=
			0
		\end{align}
		for $k\geq 1$ and such that
		\begin{align}
			F^1_1(\phi(\xi))=\psi(\xi)
		\end{align}
		or all $\xi \in \lie g$, $x_i\in\mathscr{L}[1]$.
	\item Two morphisms $F_0$ and $F_1$ of $\lie{g}$-adapted
		$L_\infty$-algebras $(\algebra L^\bullet, \phi)$ and
		$(\algebra H^\bullet,\psi)$ are called homotopic, if there exists a 
		curve of $\lie{g}$-adapted $L_\infty$-morphisms $F_t$ and a curve of degree
		$0$ elements in $\lambda_t\in \mathrm{Hom}(\Sym\mathscr{L}[1],\mathscr{L}')$, such that
		$(F_t,\lambda_t)$ is a homotopy between $F_0$ and $F_1$ as $L_\infty$-morphisms and 
		\begin{align*}
		\lambda_t(\phi(\xi)\vee\argument)=0 \text{ for all } \xi\in \lie{g}. 
		\end{align*}
	\item A $\lie{g}$-adapted morphism $F\colon (\algebra L^\bullet, \phi)\to(\algebra H^\bullet,\psi)$
	     is called homotopy equivalence, if there exists a $\lie{g}$-adapted morphism 
	     $G\colon (\algebra H^\bullet,\psi\to)(\algebra L^\bullet, \phi)$ such that $F\circ G$ and $G\circ F$ are 
	     homotopic to the respective identities. $G$ is called $\lie{g}$-adapted quasi-inverse of $F$.  
	\end{definitionlist}
	
	\end{definition}
Note that it is clear that given two $\lie{g}$-adapted morphisms 
$F\colon \algebra L^\bullet \to \algebra H^\bullet$ and 
$G\colon \algebra {H}^\bullet \to \algebra{K}^\bullet$ that their composition
 $G\circ F\colon \algebra{L}^\bullet\to \algebra{K}^\bullet$ is again a $\lie{g}$-adapted $L_\infty$-morphism, so 
$\lie{g}$-adapted $L_\infty$-algebras form indeed a category. 

\begin{remark}
It is well known that, 
for general $L_\infty$-algebras the notions of homotopy equivalence and 
quasi-isomorphism are equivalent (see e.g. \cite{kraft:2024}). 
This is in general not the case anymore for $\lie{g}$-adapted $L_\infty$-morphisms.
Here the notion of homotopy equivalence is a strictly stronger condition.  
\end{remark}

\begin{proposition}
	Let $(\algebra L^\bullet, \phi)$ and $
	(\algebra{H}^\bullet,\psi)$ be $\lie{g}$-adapted
	$L_\infty$-algebras and let $F\colon \algebra{L}^\bullet\to
	\algebra{H}^\bullet$ be a $\lie
	g$-adapted morphism. Then
	\begin{propositionlist}
		\item the map 
		\begin{align}
			\lie{g}\ni \xi \longmapsto Q_{\algebra
				L,2}^{1}(\phi(\xi),\argument)\in \End(\algebra{L})
		\end{align}
		defines a Lie algebra action $\acts$ of $\lie{g}$ on
		$\algebra{L}$.
		\item the Taylor coefficients of $Q_{\algebra{L}}$,
		$Q_{\algebra{H}}$ and $F$ are equivariant with
		respect to this action.
	\end{propositionlist} 
\end{proposition}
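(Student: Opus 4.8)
The plan is to unwind the definitions of $\lie{g}$-adapted $L_\infty$-algebra and of $\lie{g}$-adapted morphism, and read off the two claims from the quadratic $L_\infty$-relations $Q^2 = 0$ (resp. $FQ = \widetilde Q F$), specialized so that one of the inputs is $\phi(\xi)$.

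For the first item, I would set $\xi \acts x := Q^1_{\algebra L,2}(\phi(\xi) \vee x)$ and check that this is a Lie algebra representation. That $\xi \acts \argument$ is a degree-$0$ endomorphism of $\algebra L$ is clear from the degree of $\phi(\xi)$ and $Q^1_2$. The nontrivial point is $[\xi \acts, \eta \acts] = [\xi,\eta] \acts$. The key input is the $L_\infty$-Jacobi identity in the form given by $Q^2 = 0$, evaluated on $\phi(\xi) \vee \phi(\eta) \vee x$: collecting the components landing in $\algebra L[1]$ produces a sum over the ways of grouping two of the three inputs and then applying $Q^1$ again. Because $(\algebra L^\bullet, \phi)$ is $\lie g$-adapted, every term in which a $Q^1_{k+1}$ with $k \neq 1$ receives a $\phi(\xi)$ (or $\phi(\eta)$) vanishes; the surviving terms are exactly $Q^1_2(\phi(\xi) \vee Q^1_2(\phi(\eta) \vee x))$, the same with $\xi \leftrightarrow \eta$ (with the appropriate Koszul sign), and $Q^1_2(Q^1_2(\phi(\xi) \vee \phi(\eta)) \vee x)$. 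Using the adaptedness relation $Q^1_2(\phi(\xi) \vee \phi(\eta)) = \phi([\xi,\eta])$ in the last term, this is precisely the statement that $\acts$ is a Lie algebra action (the signs work out because $\phi(\xi)$ has degree $0$ in $\algebra L$, i.e. degree $-1$ in $\algebra L[1]$, so the relevant Koszul signs are trivial). The same computation on $\algebra H$ with $\psi$ gives the action on $\algebra H$.

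For the second item, ``equivariance of the Taylor coefficients'' means that each $Q^1_{\algebra L, n}$, $Q^1_{\algebra H, n}$ and $F^1_n$ intertwines the $\lie g$-actions on source and target, where $\lie g$ acts on a symmetric power $\bigvee^n \algebra L[1]$ by the Leibniz (derivation) extension $\xi \acts (x_1 \vee \cdots \vee x_n) = \sum_i x_1 \vee \cdots \vee (\xi \acts x_i) \vee \cdots \vee x_n$. For the coderivation $Q_{\algebra L}$: apply $Q^2 = 0$ to $\phi(\xi) \vee x_1 \vee \cdots \vee x_n$ and project onto $\bigvee^{?}\algebra L[1]$ appropriately; again all terms feeding $\phi(\xi)$ into a $Q^1_{k+1}$ with $k \neq 1$ drop out, and what remains is exactly the identity $\xi \acts Q^1_{\algebra L, n}(x_1 \vee \cdots \vee x_n) = \sum_i Q^1_{\algebra L, n}(x_1 \vee \cdots \vee (\xi \acts x_i) \vee \cdots \vee x_n)$, i.e. $Q^1_{\algebra L, n}$ is $\lie g$-equivariant. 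For $F$: apply the morphism relation $FQ_{\algebra L} = Q_{\algebra H} F$ to $\phi(\xi) \vee x_1 \vee \cdots \vee x_n$ and project to $\algebra H[1]$; by the $\lie g$-adaptedness of $F$ (the vanishing $F^1_{k+1}(\phi(\xi) \vee \argument) = 0$ for $k \geq 1$ and $F^1_1(\phi(\xi)) = \psi(\xi)$) the only surviving contributions assemble into $\psi(\xi) \acts F^1_n(x_1 \vee \cdots \vee x_n)$ on the target and $\sum_i F^1_n(x_1 \vee \cdots \vee (\xi \acts x_i) \vee \cdots \vee x_n)$ on the source, which is the asserted equivariance. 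The same argument applied to $Q_{\algebra H}^2 = 0$ with $\psi$ handles the $\algebra H$ side.

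I expect the main obstacle to be purely bookkeeping: carefully extracting, from the coalgebra identities $Q^2=0$ and $FQ = \widetilde Q F$, precisely which Taylor components appear when one input is $\phi(\xi)$, and tracking the shuffle signs $\epsilon(\sigma)$ to confirm that the surviving terms are exactly the (signless, since $|\phi(\xi)| = -1$ in the shifted grading) combinations needed. Once the adaptedness vanishing conditions are invoked to kill the ``bad'' terms, no genuinely new idea is required; the argument is a direct specialization of the standard derivation of the first few $L_\infty$-relations. I would organize the write-up by first proving equivariance of $Q^1_{\algebra L, n}$ (which in particular, for $n=2$, re-derives the Jacobi identity of item one), then equivariance of $F^1_n$, presenting the $n$-input computation once and noting that item one is the case $n=1$ together with the adaptedness relation on $\phi([\xi,\eta])$.
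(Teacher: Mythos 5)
Your proposal is correct and follows essentially the same route as the paper: define $\xi \acts x = Q^1_{\algebra L,2}(\phi(\xi)\vee x)$, evaluate the quadratic relations $Q^2=0$ and $FQ_{\algebra L}=Q_{\algebra H}F$ on inputs containing $\phi(\xi)$, and use the adaptedness vanishing conditions to kill all terms except those expressing the Lie algebra action and the equivariance. The only (cosmetic) difference is that the paper packages the bookkeeping by observing that $[Q_{\algebra L},\phi(\xi)\vee\argument]$ is a degree-$0$ coderivation acting as the Leibniz extension of $\xi\acts\argument$, whereas you extract the surviving Taylor components directly.
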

\begin{proof}
	Let us set $\xi \acts x = Q_\algebra{L}(\phi(\xi)\vee x)$, for any
	$\xi \in \lie{g}$ and $x\in \algebra{L}[1]$.
	Note that the map $\phi(\xi)\vee \colon
	\Sym(\algebra{L}[1])\to \Sym(\algebra{L}[1])$ defined by
	\begin{align*}
		\phi(\xi)\vee(x_1\vee\cdots\vee x_k)=\phi(\xi)\vee x_1\vee\cdots\vee x_k
	\end{align*}
	is a coderivation of degree $-1$, which means that
	$[Q_\algebra{L},\phi(\xi)\vee]$ is a coderivation of degree
	$0$.  One can check that
	\begin{align*}
		[Q_\algebra{L},\phi(\xi)\vee](x_1\vee\cdots\vee x_k)=
		\sum_i x_1\vee\cdots (\xi\acts x_i)\vee\cdots\vee x_k.
	\end{align*}
	Thus, we obtain: 
	\begin{propositionlist}
		\item Let $\xi,\eta\in \lie{g}$ and $x\in \algebra{L}[1]$, then 
		\begin{align*}
			0&=(Q_\algebra{L}Q_\algebra{L})^{1}_3(\phi(\xi)\vee\phi(\eta)\vee x)\\&
			=[\xi,\eta]\acts x- \xi\acts(\eta\acts x)+\eta\acts(\xi\acts x)
		\end{align*}
		This means in particular that $\acts$ is a Lie algebra action. 
		\item The proofs for the equivariance are essentially
		the same for all the different Taylor coefficients,
		let us therefore prove it only for $Q_\algebra{L}$. We
		have for $\xi\in\lie{g}$ and $x_i\in \algebra{L}[1]$
		the following equation
		\begin{align*}
			Q_{\algebra{L},k}^1(\xi\acts(x_1\vee\cdots\vee x_k))
			&=
			Q_{\algebra{L},k}^1( [Q_\algebra{L},\phi(\xi)\vee](x_1\vee\cdots\vee x_k)\\&
			=
			Q_{\algebra{L},i+1}^1(\phi(\xi)\vee Q^i_{\algebra{L},k}(x_1\vee\cdots\vee x_k))\\&
			=
			Q_{\algebra{L},2}^1(\phi(\xi)\vee Q^1_{\algebra{L},k}(x_1\vee\cdots\vee x_k))\\&
			=
			\xi\acts Q_{\algebra{L},k}^1(x_1\vee\cdots\vee x_k).
		\end{align*}
	\end{propositionlist}
\end{proof}
We are ready to prove one of our main results.
\begin{theorem}\label{Thm:gQuis}
	Given two $\lie g$-adapted $L_\infty$-algebras $(\algebra
	L^\bullet,Q_{\algebra L}, \phi)$, $(\algebra H^\bullet,Q_\algebra{H}, \psi)$ and let
	$F\colon \algebra L^\bullet \to \algebra H^\bullet$ be a $\lie
	g$-adapted morphism such that the first component of $F$ is
	part of a chain homotopy equivalence
	\begin{equation}
		\label{eq:homotopyret}
		\begin{tikzcd} 
			\arrow[loop left, distance=3em, start
			anchor={[yshift=-1ex]west}, end
			anchor={[yshift=1ex]west}]{}{h_{\algebra L}}
			\algebra L^\bullet \arrow[rr, shift left,
			"f:=F_1^1"] && \algebra H^\bullet \arrow[loop
			right, distance=3em, start
			anchor={[yshift=1ex]east}, end
			anchor={[yshift=-1ex]east}]{}{h_{\algebra
					H}} \arrow[ll, shift left, "g"]
		\end{tikzcd}
	\end{equation}
	where $g$, $h_\algebra{A}$ and $h_\algebra{B}$ are
	$\lie{g}$-equivariant and we have
	\begin{equation}
		g(\psi(\xi)))
		=
		\phi(\xi)
		\quad
		\text{and}
		\quad
		h_\algebra{L}(\phi(\xi))
		=
		h_\algebra{H}(\psi(\xi))
		=
		0
	\end{equation}
	for all $\xi\in \lie{g}$.  Then $F$ admits a $\lie{g}$-adapted
	$L_\infty$-quasi-inverse $G\colon \algebra{H}^\bullet\to
	\algebra{L}^\bullet$, such that $G_1^1=g$.
\end{theorem}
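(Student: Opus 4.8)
The plan is to build $G$ one Taylor coefficient at a time, out of the linear homotopy data $g,h_{\algebra L},h_{\algebra H}$ and the higher structure maps of $Q_{\algebra L}$, $Q_{\algebra H}$ and $F$, in a way that visibly keeps track of both $\lie g$-equivariance and $\lie g$-adaptedness. By the proposition preceding the theorem, the formulas $\xi\acts x=Q^1_{\algebra L,2}(\phi(\xi)\vee x)$ and $\xi\acts y=Q^1_{\algebra H,2}(\psi(\xi)\vee y)$ define $\lie g$-actions on $\algebra L^\bullet$ and $\algebra H^\bullet$ under which all Taylor coefficients of $Q_{\algebra L}$, $Q_{\algebra H}$ and $F$ are equivariant; the hypotheses say moreover that $g$, $h_{\algebra L}$, $h_{\algebra H}$ are equivariant, that $g(\psi(\xi))=\phi(\xi)$, and that $h_{\algebra L}(\phi(\xi))=h_{\algebra H}(\psi(\xi))=0$. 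Tensor and symmetric powers of the chain homotopy equivalence \eqref{eq:homotopyret} then furnish, for each $n$, a $\lie g$-equivariant chain homotopy equivalence of $\big(\mathrm{Hom}(\bigvee^n\algebra H^\bullet[1],\algebra L^\bullet),\mathrm D\big)$ with its homology, where $\mathrm D$ is built from the differentials $Q^1_{\algebra L,1}$ and $Q^1_{\algebra H,1}$; write $\mathsf H$ for the associated homotopy, which is assembled solely from $h_{\algebra L}$ and $h_{\algebra H}$ and is therefore $\lie g$-equivariant.

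Next I would run the standard inductive construction of a quasi-inverse, as in \cite{kraft:2024}. Set $G^1_1:=g$. Assume $G^1_1,\dots,G^1_{n-1}$ have been chosen so that the $L_\infty$-morphism equation $GQ_{\algebra H}=Q_{\algebra L}G$ holds up to arity $n-1$. The arity-$n$ part of this equation has the form $\mathrm D(G^1_n)=\mathcal O_n$ for an explicit cochain $\mathcal O_n\in\mathrm{Hom}(\bigvee^n\algebra H^\bullet[1],\algebra L^\bullet)$ depending only on $G^1_{<n}$ and on the structure maps of $Q_{\algebra L},Q_{\algebra H},F$; one checks in the usual way that $\mathcal O_n$ is a $\mathrm D$-cocycle and that its class vanishes, the quasi-isomorphism property of $F_1^1=f$ entering exactly here, so one may take $G^1_n:=-\mathsf H\,\mathcal O_n$. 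Since $\mathcal O_n$ and $\mathsf H$ are built from $\lie g$-equivariant pieces, each $G^1_n$ is $\lie g$-equivariant, and hence $G\colon\algebra H^\bullet\to\algebra L^\bullet$ is a $\lie g$-equivariant $L_\infty$-morphism with $G^1_1=g$. The same scheme, run inside the convolution $L_\infty$-algebras governing $F\circ G$ and $G\circ F$, produces homotopies $(\,\cdot\,,\lambda_t)$ witnessing $F\circ G\sim\id_{\algebra H}$ and $G\circ F\sim\id_{\algebra L}$ all of whose ingredients are $\lie g$-equivariant, and the usual two-out-of-three argument ($G\sim G\circ F\circ G\sim G$ read against either composite) then shows $G$ is a genuine $L_\infty$-homotopy equivalence.

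The core of the argument is to show that $G$ is $\lie g$-adapted, i.e. $G^1_{k+1}(\psi(\xi)\vee x_1\vee\cdots\vee x_k)=0$ for all $k\ge1$; the case $k=0$ is $G^1_1(\psi(\xi))=g(\psi(\xi))=\phi(\xi)$, which holds by hypothesis. I would prove the vanishing by induction on $k$: substitute $\psi(\xi)\vee x_1\vee\cdots\vee x_k$ into $G^1_{k+1}=-\mathsf H\,\mathcal O_{k+1}$ and inspect each summand of $\mathcal O_{k+1}$. The outermost application of $\mathsf H$ either lands on $\psi(\xi)$ directly, giving $0$ because $h_{\algebra H}(\psi(\xi))=0$ and $\mathsf H$ is built from $h_{\algebra H},h_{\algebra L}$, or it lands on a sub-expression into which $\psi(\xi)$ has been fed through one of $Q_{\algebra H}$, $Q_{\algebra L}$, $F^1$, or a lower $G^1_{\le k}$; by the $\lie g$-adaptedness of $Q_{\algebra H}$, $Q_{\algebra L}$, $F$ and the inductive hypothesis on $G$, every such term in which $\psi(\xi)$ occupies a slot of arity $\ne2$ vanishes, while the arity-$2$ occurrences are precisely the action $\xi\acts(\argument)$ and, by the equivariance established above, reorganise into an expression of the form $\xi\acts\mathcal O'$ with $\mathsf H\,\mathcal O'$ vanishing by the inductive hypothesis, so that $G^1_{k+1}(\psi(\xi)\vee x_1\vee\cdots\vee x_k)=-\mathsf H(\xi\acts\mathcal O')=-\xi\acts\mathsf H\,\mathcal O'=0$. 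The identical bookkeeping, applied inside the convolution $L_\infty$-algebras, shows the homotopies produced above satisfy $\lambda_t(\phi(\xi)\vee\argument)=\lambda_t(\psi(\xi)\vee\argument)=0$, so $F\circ G$ and $G\circ F$ are homotopic to the identities \emph{as morphisms of $\lie g$-adapted $L_\infty$-algebras}, and $G$ is the required $\lie g$-adapted quasi-inverse with $G^1_1=g$. I expect this last propagation check — tracking how $\psi(\xi)$ threads through the inductive formulas and confirming that all resulting terms either vanish or close up via the $\lie g$-action — to be the main technical obstacle.
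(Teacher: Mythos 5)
Your proposal is correct in outline but follows a genuinely different route from the paper. You construct $G$ directly by the obstruction-theoretic recursion $G^1_n=-\mathsf{H}\,\mathcal{O}_n$ in the convolution complex and then propagate $\lie{g}$-adaptedness by tracking where $\psi(\xi)$ lands. The paper instead proceeds by homotopy transfer in two stages: it first builds, via the explicit formulas extracted from \cite{petersen}, a transferred $L_\infty$-structure $\widetilde{Q}_{\algebra{H}}$ on $\algebra{H}$ together with a $\lie{g}$-adapted morphism $\widetilde{G}\colon(\algebra{H},\widetilde{Q}_{\algebra{H}})\to(\algebra{L},Q_{\algebra{L}})$, each new Taylor coefficient being an explicit composition of $f$, $g$, $h_{\algebra{L}}$, $h_{\algebra{H}}$ with the defects $\mu$ and $\nu$ of the coherence relations; it then identifies $(\algebra{H},\widetilde{Q}_{\algebra{H}})$ with $(\algebra{H},Q_{\algebra{H}})$ by flowing $F\circ\widetilde{G}$ along the gauge $h_{\algebra{H}}$ to an invertible $\lie{g}$-adapted morphism $\Phi$, sets $G=\widetilde{G}\circ\Phi^{-1}$, and finally obtains $G\circ F\simeq\id$ not directly but by re-running the whole construction on $G$ and invoking a two-out-of-three argument. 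The adaptedness bookkeeping is essentially identical in both approaches (insert $\psi(\xi)$, use equivariance of every ingredient, $h_{\algebra{H}}(\psi(\xi))=0$, and the vanishing of all insertions of $\psi(\xi)$ into slots of arity $\neq 2$), so that part of your plan matches the paper's computation of $\mu(\psi(\xi)\vee\argument)=\nu(\psi(\xi)\vee\argument)=0$. What the paper's route buys is that existence of the next Taylor coefficient is never in question: the transfer formulas satisfy the coherence identities by construction, so the step you dispatch with ``one checks in the usual way that the class of $\mathcal{O}_n$ vanishes'' is precisely what is being sidestepped. Be aware that this vanishing is \emph{not} a consequence of $f$ being a quasi-isomorphism alone; it requires the higher Taylor components of $F$ together with both homotopies $h_{\algebra{L}}$ and $h_{\algebra{H}}$, which is exactly the content of the transfer formulas. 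You must also check that the specific primitive $\mathsf{H}\,\mathcal{O}_n$, rather than an arbitrary one, is the choice for which your adaptedness induction closes, and note that your concluding ``$G\sim G\circ F\circ G\sim G$'' is vacuous as stated --- the actual two-out-of-three step needed (and used by the paper) is to produce $\hat{F}$ with $G\circ\hat{F}\simeq\id$ and deduce $\hat{F}\simeq F$. With these points filled in, your plan goes through.
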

\begin{proof}
    Recall that with an homotopy equivalence we mean, that $$\id-f\circ g=[Q_{\algebra{H},1}^1,h_\algebra{H}] \text{ and }  
    \id-g\circ f=[Q_{\algebra{L},1}^1,h_\algebra{L}].$$
    The proof consists of two steps: first we
	construct a $\lie{g}$-adapted $L_\infty$-algebra structure
	$\widetilde{Q}_\algebra{H}$ on $\algebra H$ together with a
	$\lie{g}$-adapted $L_\infty$-morphism $\widetilde{G}\colon
	(\algebra{H},\widetilde{Q}_\algebra{H})\to
	(\algebra{L},Q_\algebra{L})$. As a second step we construct
	a $\lie{g}$-adapted $L_\infty$-isomorphism
	$(\algebra{H},\widetilde{Q}_\algebra{H})\to
	(\algebra{H},Q_\algebra{H})$.  We proceed by induction on the
	Taylor coefficients starting at $k=2$. Let us choose
	\begin{align}
		\widetilde{Q}_{\algebra{H},2}^1
		=
		Q_{\algebra{H},2}^1-
		f\circ(g\circ Q_{\algebra{H},2}^1-Q_{\algebra{L},2}^1\circ g^{\vee 2})
	\end{align}
	and 
	\begin{align}
		\widetilde{G}^1_2
		=
		h_\algebra{L}\circ (g\circ 
		Q_{\algebra{H},2}^1-Q_{\algebra{L},2}^1\circ g^{\vee 2}))
	\end{align}
	It is easy to see that these two maps fulfill the requirements
	up to order $2$ and are $\lie{g}$-adapted.
	Assume now that the $k$-th first Taylor
	coefficients of $\widetilde{Q}_\algebra{H}$ and
	$\widetilde{G}$ up to order $k$, for $k\geq 2$.  We denote by
	\begin{align*}
		\mu=(\widetilde{Q}_{\algebra{H}}
		\widetilde{Q}_{\algebra{H}})^1_{k+1} 
		\ \text{ and } \
		\nu = (\widetilde{G}\circ \widetilde{Q}_\algebra{H}-
		Q_\algebra{L}\circ \widetilde{G})^1_{k+1},
	\end{align*}
	then one can check that 
	\begin{align*}
		Q^1_{\algebra H, 1}\circ \mu - \mu\circ Q^{k+1}_{\algebra H, k+1}=0 \ \text{ and } \
		Q^1_{\algebra L, 1}\circ \nu + \nu\circ Q^{k+1}_{\algebra H, k+1}=g\circ \mu
	\end{align*}
	by using that $\widetilde{Q}_\mathscr{H}$ is a codifferential
	and $\widetilde{G}$ is an $L_\infty$-morphism up to order
	$k$. Thus we can show that
	\begin{align*}
		\widetilde{Q}^1_{\algebra H, k+1}:= 
		-h_\algebra{H}\circ \mu -2f\circ \nu +f\circ g\circ(h_\algebra{H}\mu+f\circ \nu) 
	\end{align*}
	and 
	\begin{align*}
		\widetilde{G}^1_{k+1}:=h_\algebra{L}\circ (\nu-g\circ(h_\algebra{H}\mu+f\circ \nu))
	\end{align*}
	are a codifferential and an $L_\infty$-morphism up to order
	$k+1$, resp.
	The formulas for the $L_\infty$-structure and morphism are not guessed, they can be extracted from 
	\cite{petersen}.
	 Note that the maps $\widetilde{Q}^1_{\algebra H, k+1}$
	and $ \widetilde{G}^1_{k+1}$ are equivariant, since they are
	just compositions of equivariant maps. Moreover, let $\xi\in
	\lie{g}$ and $x_i\in \algebra{H}[1]$, then
	\begin{align*}
		\mu(\psi(\xi)\vee x_1\vee\cdots\vee x_k)=
		\widetilde{Q}_{\algebra{H},k}^1(\xi\acts (x_1\vee\cdots \vee 
		x_k)- \xi\acts\widetilde{Q}_{\algebra{H},k}^1( (x_1\vee\cdots 
		\vee x_k)=0
	\end{align*}
	and 
	\begin{align*}
		\nu(\psi(\xi)\vee x_1\vee \cdots \vee x_k)
		=\widetilde{G}^1_{k}(\xi\acts (x_1\vee\cdots \vee 
		x_k))- \xi\acts \widetilde{G}^1_{k}(x_1\vee\cdots \vee 
		x_k)=0.
	\end{align*}
	This shows that we can extend by one the order of coderivation and of the
	$L_\infty$-morphism and the claim follows.  Using
	now $F$, we obtain a $\lie{g}$-adapted morphism $F\circ
	\widetilde{G}\colon (\algebra{H},\widetilde{Q}_\algebra{H})\to
	(\algebra{H},Q_\algebra{H})$, which has as a first Taylor
	coefficient $f\circ g$, so it is in general not an
	isomorphism.  Nevertheless, we can use $h_\algebra{H}\colon
	\algebra{H}^\bullet \to \algebra{H}^{\bullet-1}$ as a gauge
	transformation, i.e.  we consider the differential equation
	\begin{align*}
		\frac{\D}{\D t} \Phi_t = Q^{\Phi_t}(h_\algebra{H})    
	\end{align*}
	in the convolution $L_\infty$-algebra $\mathrm{Hom}(\cc{\Sym}
	(\algebra H[1]),\algebra{H})$ with the initial condition 
	$\Phi_0=F\circ \widetilde{G}$. Evaluated on $\psi(\xi)\vee 
	x_1\vee\cdots\vee x_k$ we get 
	\begin{align*}
		\frac{\D}{\D t} \Phi_{t,k+1}^1(\psi(\xi)\vee x_1\vee\cdots\vee x_{k})&=\sum_{i}\pm
		Q^1_{\algebra{H},j+1}(\Phi_{t,k}^j(\psi(\xi)\vee x_1\vee\overset{\wedge_i}\cdots\vee x_{k+1})\vee h_\algebra{H}(x_i))\\&
		+
		h_\algebra{H}( \widetilde{Q}^1_{\algebra{H},k+1}(\psi(\xi)\vee 
		x_1\vee\cdots\vee x_k)).
	\end{align*}
	For $k=0$, we get immediately $\frac{\D}{\D t} \Phi_{t,1}
	^1(\psi(\xi))=0$, since $h_\algebra{H}(\psi(\xi))=0$, and 
	therefore, since $\Phi_{0,1}^1(\psi(\xi)))=\psi(\xi)$. For 
	$k=1$, we get 
	\begin{align}
		\frac{\D}{\D t} \Phi_{t,1}^1(\psi(\xi)\vee x_1)=
		-\xi\acts h_\algebra{H}(x_1)+h_\algebra{H}(\xi \acts x_1)=0,
	\end{align}
	since $h_\algebra{H}$ is equivariant. Thus, $\Phi_{t,2}
	^1(\psi(\xi)\vee x_1)=0$, since $\Phi_{0,2}^1(\psi(\xi)\vee 
	x_1)=0$. We inductively get that $\Phi_{t,k}
	^1(\psi(\xi)\vee\cdot)=0$ for $k\geq 2$ and thus $\Phi_t$ is 
	$\lie{g}$-adapted. Moreover, one obtains easily 
	\begin{align*}
		\frac{\D}{\D t} \Phi_{t,1}^1(x)=
		Q^1_{\algebra{H},1}(h_\algebra{H}(x))+h_\algebra{H}
		((Q^1_{\algebra{H},1}(x))
	\end{align*}
	And thus $\Phi_{t,1}^1(x)=f\circ g +t(Q^1_{\algebra{H},1}
	(h_\algebra{H}(x))+h_\algebra{H}((Q^1_{\algebra{H},1}(x)))$ and 
	henceforth 
	\begin{align*}
		\Phi_{1,1}^1(x)=x.
	\end{align*}
	Therefore $\Phi_1$ is invertible as a $\lie{g}$-adapted 
	$L_\infty$-morphism, which we denote simply by $\Phi$. We define 
	$G=\widetilde{G}\circ \Phi^{-1}$. Then we see immediately that we have
	\begin{align*}
		F\circ G=F \circ \widetilde{G}\circ \Phi^{-1}\simeq \Phi 
		\circ\Phi^{-1}=\mathrm{id}.
	\end{align*}
	Note that here the homotopy is understood as $\lie{g}$-adapted
	homotopy equivalences. To show that $G\circ F$ is homotopic to
	identity (as $\lie{g}$-adapted $L_\infty$-morphisms), we apply
	what we did so far to the $L_\infty$-morphism $G$ and obtain a
	$\lie{g}$-adapted $L_\infty$-morphism $\hat{F}\colon
	\algebra{L}\to \algebra{H}$ such that $G\circ \hat{F}\simeq
	\mathrm{id}$, then we have
	\begin{align*}
		\hat{F}\simeq F\circ G\circ \hat{F}\simeq F
	\end{align*}
	which are again homotopy equivalences as $\lie{g}$-adapted
	$L_\infty$-morphisms.
	 Finally we obtain
	\begin{align*}
		G\circ F\simeq G\circ \hat{F}\simeq \mathrm{id}.
	\end{align*} 
\end{proof}

\section{Equivariant formality}
\label{sec:formality}

\subsection{A $\Der(A)$-adapted zig zag}

In the last section we have seen that we can construct a rather
explicit zig-zag of $G_\infty$ (and therefore
$L_\infty$)-algebras. Our plan is now to use Theorem \ref{Thm:gQuis}
in order to find explicit quasi-inverses which are
$\lie{g}$-adapted. Therefore, we need to show that the
$L_\infty$-quasi-isomorphisms are $\lie{g}$-adapted and find
appropriate homotopy equivalences of the underlying cochain complexes.
We first need to check that our constructed zig-zag from Theorem
\ref{thm:zigzag} is adapted for some Lie algebra.  Let us consider
therefore the Lie algebra $\Der(A)$, so in particular we have
canonical equivariant maps
\begin{equation}
	\Der(A)\hookrightarrow \multivect^\bullet(A)
	\text{ and }
	\Der(A)\hookrightarrow C^\bullet(A).
\end{equation}
Note that the Gerstenhaber bracket of $C^\bullet(A)$ evaluated on
$\Der(A)$ is just the commutator, and thus $C^\bullet(A)$ is
$\Der(A)$-adapted. For $\multivect^\bullet(A)$ the statement is
trivial.
\begin{proposition}
	\label{prop:equiv}
	Let $A$ be a commutative algebra.
	\begin{propositionlist}
		\item The Lie algebra part of the Gerstenhaber algebras 
	$\comm(\mathscr{L}_{C^\bullet(A)}[-1])$,  
	$\comm (L_A[-1])$ and 
	$\comm(\mathscr{L}_{\multivect(A)}[-1])$ 
	are $\Der(A)$-adapted 
	via 
	$\comm^1(\colie^1(\multivect^\bullet(A)))
	=\multivect^\bullet(A)\supseteq \Der(A)$.
	\item The diagram 
		\begin{equation}
		\label{eq:thmzigzag}
		\begin{tikzcd}
			\comm(\mathscr{L}_{C^\bullet(A)}[-1]) & 
			\comm (L_A[-1]) \arrow[l]\arrow[r]&
			\comm(\mathscr{L}_{\multivect(A)}[-1])\arrow[d]\\
			C^\bullet(A)\arrow[u] & & 
			\multivect^\bullet(A)
		\end{tikzcd}
	\end{equation} 
	is a diagram of $\Der(A)$-adapted $L_\infty$--algebras.
	\end{propositionlist}

\end{proposition}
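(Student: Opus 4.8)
The plan is to check, object by object, the two defining relations of a $\Der(A)$-adapted $L_\infty$-algebra and, arrow by arrow, the two relations of a $\Der(A)$-adapted morphism, using only the explicit descriptions of the zig-zag from Section~\ref{sec:zigzag}. First I would fix the structure maps: for each of $\comm(\mathscr{L}_{C^\bullet(A)}[-1])$, $\comm(L_A[-1])$, $\comm(\mathscr{L}_{\multivect(A)}[-1])$ the map $\phi$ is the inclusion $\Der(A)\hookrightarrow\multivect^\bullet(A)=\colie^1(\multivect^\bullet(A)[1])[-1]=\comm^1(\mathscr{L}_{\multivect(A)}[-1])$ followed by the relevant dg Lie bialgebra map of the diagram, using that the HKR embedding $\multivect^\bullet(A)\hookrightarrow C^\bullet(A)$ is the identity on $\Der(A)$, so a derivation viewed as a length-one colie word is exactly a degree-one Hochschild cochain; for the endpoints $C^\bullet(A)$ and $\multivect^\bullet(A)$ one takes the tautological inclusions, whose $\Der(A)$-adaptedness has already been recorded. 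With these choices every arrow of the diagram sends $\phi(X)$ to $\psi(X)$ by construction, so the first morphism relation holds for free, and the problem reduces to closedness, vanishing of higher brackets, and the binary-bracket relation at each object, together with $F^1_{k+1}(\phi(X),\dots)=0$ for $k\geq1$ at each arrow.

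For the objects $\comm(\mathscr{L}[-1])$: these are dg Gerstenhaber algebras, so the associated Lie parts are DGLAs and $Q^1_{k+1}=0$ for $k\geq2$ automatically; it remains to see that $\phi(X)$ is closed and that $Q^1_2(\phi(X),\phi(Y))=\phi([X,Y])$. By Lemma~\ref{lem:dggerst} the differential of $\comm(\mathscr{L}[-1])$ restricted to the cogenerators is the sum of the straight shuffle extension of the $q^n$'s and the cobracket; on length-one colie words the former reduces to the internal differential $q^1$ (zero for polyvector fields, and the Hochschild differential on $C^\bullet(A)$, which kills derivations since they are cocycles), while the latter vanishes because length-one words are $\delta^c$-primitive, so $\phi(X)$ is closed. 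Again by Lemma~\ref{lem:dggerst} the Gerstenhaber bracket of $\comm(\mathscr{L}[-1])$ on length-one words is, up to the usual signs, the bracket of $\mathscr{L}$, which by Lemma~\ref{lemma:liebialgcolie} is on $\colie^1\vee\colie^1$ the straight shuffle extension of $q^{1,1}$, that is, the binary Schouten resp. Gerstenhaber bracket; on $X,Y\in\Der(A)$ this is the commutator, which is what $\phi([X,Y])$ is. The identical computation restricted to the sub-Lie-bialgebra $L_A$ settles that object.

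For the arrows: the three maps $\comm(L_A[-1])\to\comm(\mathscr{L}_{C^\bullet(A)}[-1])$, $\comm(L_A[-1])\to\comm(\mathscr{L}_{\multivect(A)}[-1])$ and $\comm(\mathscr{L}_{\multivect(A)}[-1])\to\multivect^\bullet(A)$ are $\comm$ of morphisms of dg Lie bialgebras, respectively the projection onto the length-one part, hence strict morphisms of Gerstenhaber algebras and in particular strict $L_\infty$-morphisms, for which $F^1_{k+1}(\phi(X),\dots)=0$ $(k\geq1)$ is vacuous. The only genuine check is the arrow $C^\bullet(A)\to\comm(\mathscr{L}_{C^\bullet(A)}[-1])$, the composite of the $G_\infty$-$\gerst^\vee$-adjunction unit $\chi^{C^\bullet(A)}$ with the projection induced by the map $P\colon\flie(\cocom(L[1])[-1])\to L$ of Proposition~\ref{prop:dgliebi}. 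Here I would argue that the higher Taylor coefficients of the adjunction unit take values in the part of $\gerst(\gerst^\vee(C^\bullet(A)))$ assembled from bracket words containing a $\cocom^{\geq 2}$-factor, which $P$ annihilates (exactly as in the proof of Proposition~\ref{prop:dgliebi}, where $P$ kills any bracket one of whose entries lies in $\cocom^{\geq 2}$); hence the induced $L_\infty$-morphism is strict, equal to $D\mapsto(D)$, and thus trivially $\Der(A)$-adapted. Independently of the precise shape of $\chi^{C^\bullet(A)}$, one can also argue on Taylor coefficients: a contribution to $\chi^{C^\bullet(A)}_{k+1}(\phi(X),\dots)$ surviving $P$ is built from the Hochschild $G_\infty$-operations with $X\in\Der(A)$ inserted, and these vanish by $\partial X=0$ and property~\eqref{eq:good}, leaving only the binary bracket $q^{1,1}(X,\argument)$, which is linear and so only feeds $F^1_1$, not $F^1_{k+1}$ for $k\geq1$. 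I expect this last arrow to be the only real obstacle — one must keep track of which components of the adjunction unit survive $P$, since a priori that unit is genuinely non-strict and its binary part sees both the cup product and the Gerstenhaber bracket of $C^\bullet(A)$ — while everything else is routine bookkeeping of degrees, shuffle signs, and the structure of the zig-zag set up in Section~\ref{sec:zigzag}.
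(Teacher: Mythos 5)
Your proposal is correct and follows essentially the same route as the paper: reduce the object-level checks to the sub-dg-Gerstenhaber algebra $\comm(L_A[-1])$ (where higher brackets vanish and the binary bracket restricts to the commutator on $\Der(A)$), observe that all arrows except $C^\bullet(A)\to\comm(\mathscr{L}_{C^\bullet(A)}[-1])$ are strict Gerstenhaber morphisms and hence vacuously adapted, and handle the remaining arrow by showing its composition with the projection of Proposition~\ref{prop:dgliebi} kills $\cocom^{\geq 2}$, so the induced $L_\infty$-morphism is strict. Your write-up is more detailed than the paper's (which compresses the object-level verification into the single observation that $\Der(A)$ is a sub-Lie algebra of $\comm(L_A[-1])$), and your secondary argument via property~\eqref{eq:good} is an unneeded but harmless bonus.
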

\begin{proof}
	Since $\comm (L_A[-1])$ is a dg sub-Gerstenhaber
	algebra of the other two, we only need to show that
	$\Der(A)\subseteq\comm (L_A[-1])$ is a sub-Lie algebra, which
	is clear by construction of $L_A$.  
	To prove the second item we first observe that all the morphisms in
	the diagram \eqref{eq:thmzigzag} are morphisms of Gerstenhaber algebras, which are canonically 
	$\Der(A)$ adapted, 
	besides the map $C^\bullet(A)\to
	\comm(\mathscr{L}_A[-1])$. This map is given in particular by the
	$L_\infty$-part of the concatenation of $\chi^{C^\bullet(A)}$
	and the map $\gerst(\gerst^\vee(C^\bullet(A)))\to
	\comm(\mathscr{L}_{C^\bullet(A)}[-1]))$ which comes from the
	projection \newline $\flie(\cocom
	(\mathscr{L}_{C^\bullet(A)}[1])[-1]\to
	\mathscr{L}_{C^\bullet(A)}$, see Proposition
	\ref{prop:dgliebi}. The Taylor coefficients of this
	$L_\infty$-morphism are given by
	\begin{equation}
		\begin{tikzcd}
			\cocom(C^\bullet(A)[2])
			\hookrightarrow
			\cocom(\mathscr{L}_{C^\bullet(A)}[1]) \arrow[r]& 
			\comm(\flie(\cocom(\mathscr{L}_{C^\bullet(A)}[1])[-1])[-1])[2]\arrow[d]\\
			& \comm(\mathscr{L}_{C^\bullet(A)}[-1])[2]
		\end{tikzcd}
	\end{equation}
	Here the horizontal arrow is the inclusion and the vertical
	one is the Gerstenhaber algebra map obtained in Proposition
	\ref{prop:dgliebi}. Thus the evaluation of the above
	composition at $\cocom^{\geq 2}(\mathscr{L}_{C^\bullet(A)})$
	is equal to zero
	and therefore the $L_\infty$-morphism is
	$\Der(A)$-adapted.
\end{proof}

Let us now take a closer look at the zig-zag from Theorem
\ref{thm:zigzag}. The first Proposition we can deduce is actually
rather general.
\begin{proposition}\label{Prop:HomotopyI}
	Let $A$ be a commutative algebra over a field of
	characteristic $0$, then the first Taylor coefficient of the
	$\Der(A)$-adapted $L_\infty$ morphism
	\begin{equation}
		\label{eq:homotopyret2}
		\begin{tikzcd} 
			\comm(\mathscr{L}_{C^\bullet(A)}[-1]) 
			&&  
			C^\bullet(A)
			\arrow[ll, " "]
		\end{tikzcd}
	\end{equation}
	fits into a special deformation retract
	\begin{equation}
		\label{eq:homotopyret2}
		\begin{tikzcd} 
			\arrow[loop left, distance=3em, start anchor={[yshift=-1ex]west}, end anchor={[yshift=1ex]west}]{}{h_{\comm(\mathscr{L}_{C^\bullet(A)}[-1]) }}
			\comm(\mathscr{L}_{C^\bullet(A)}[-1]) 
			\arrow[rr, shift left, " \pi"] 
			&&  
			C^\bullet(A)
			\arrow[ll, shift left, "i"]
		\end{tikzcd}
	\end{equation}
	such that for all derivations $D\in \Der(A)$ 
	\begin{align*}
		i(D)=(D),\ \ \pi((D))=D \ \text{ and } \
		h_{\comm(\mathscr{L}_{C^\bullet(A)}[-1]) }((D))=0 
	\end{align*}
	and the maps $i$, $\pi$ and $h_{\comm(\mathscr{L}_{C^\bullet(A)}[-1]) }$  are $\Der(A)$-invariant. 
\end{proposition}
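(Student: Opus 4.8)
The plan is to recognize the $L_\infty$-morphism $\varphi\colon C^\bullet(A)\to\comm(\mathscr{L}_{C^\bullet(A)}[-1])$ from Theorem~\ref{thm:zigzag} as a composite of two maps that is, componentwise, \emph{small} in the homological perturbation sense, and then to build the retract by transferring a model retract along the zig-zag of Proposition~\ref{prop:dgliebi}. Concretely, $\varphi$ is the $L_\infty$-part of the composition of the unit $\chi^{C^\bullet(A)}\colon C^\bullet(A)\to\gerst(\gerst^\vee(C^\bullet(A)))$ of the $G_\infty$-$\gerst^\vee$-adjunction with the projection $\gerst(\gerst^\vee(C^\bullet(A)))\to\comm(\mathscr{L}_{C^\bullet(A)}[-1])$ coming from the canonical surjection $\flie(\cocom(\mathscr{L}_{C^\bullet(A)}[1])[-1])\to\mathscr{L}_{C^\bullet(A)}$ of Proposition~\ref{prop:dgliebi}. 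First I would write down the first Taylor coefficient $f=\varphi_1^1$ explicitly: on a cochain $D\in C^\bullet(A)$ it produces the length-one element $(D)\in\colie^1(C^\bullet(A)[1])\subset\mathscr{L}_{C^\bullet(A)}$, sitting inside $\comm(\mathscr{L}_{C^\bullet(A)}[-1])$ as a single generator; in particular $f(D)=(D)$ for $D\in\Der(A)$, which is the first of the three required identities.

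Next I would construct the contracting data $i$ and $h$. The natural candidate for $i$ is the projection $\comm(\mathscr{L}_{C^\bullet(A)}[-1])\to C^\bullet(A)$ obtained as in the discussion after the statement of Theorem~\ref{thm:zigzag}: compose the projection $\mathscr{L}_{C^\bullet(A)}[-1]=\colie(C^\bullet(A)[1])[-1]\to\colie^1(C^\bullet(A)[1])[-1]=C^\bullet(A)$ with the projection onto the degree-one (word-length-one) part of $\comm(-)$, so that $\pi\circ i=\id_{C^\bullet(A)}$ holds on the nose and $\pi((D))=D$. The homotopy $h$ then has to witness $\id-i\pi=[\D,h]$ on $\comm(\mathscr{L}_{C^\bullet(A)}[-1])$, where $\D$ is the total differential (coming from the Hochschild differential on $C^\bullet(A)$, the cofree Lie cobracket, and the shuffle/commutative extensions). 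The key observation is that $\comm(\mathscr{L}_{C^\bullet(A)}[-1])$ is a direct sum over word lengths $\geq 0$ and over $\colie$-tensor-lengths $\geq 1$, that $i\pi$ is the projection onto the piece with word length $1$ and $\colie$-length $1$, and that the \emph{leading} part of $\D$ (the Hochschild differential acting factorwise) together with an internal homotopy on $C^\bullet(A)$ contracts everything else. So I would start from a special deformation retract $(i_0,p_0,h_0)$ of the complex $(C^\bullet(A),\del)$ onto itself (the identity retract, with $h_0=0$), take its free cofree extension to $\comm(\colie(\,\cdot\,[1])[-1])$ with the induced differential $\D_0$ built only from $\del$ and the coproducts (for which the standard homotopies on cofree conilpotent coalgebras, cf. the explicit formulas in \cite{loday.vallette:2012a}, give a special deformation retract onto the word-length-and-$\colie$-length-one part, i.e.\ onto $C^\bullet(A)$), and then perturb: the full differential $\D$ differs from $\D_0$ by the higher $q$-terms and the bracket terms, which raise tensor/word length and are therefore locally nilpotent perturbations. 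Corollary~\ref{cor:SpecialDeformationRetract} then upgrades the transferred retract to a special deformation retract of $(\comm(\mathscr{L}_{C^\bullet(A)}[-1]),\D)$ onto $(C^\bullet(A),\del)$, and one reads off $i$, $\pi=p$, $h$ with $h^2=0$, $ph=0$, $hi=0$.

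For the equivariance and the vanishing on derivations I would argue as follows. All the constructions above — the cofree coalgebra functors, the shuffle and commutative extensions, the canonical projections, and the homological perturbation formulas — are natural in $A$ and in particular commute with the $\Der(A)$-action that Proposition~\ref{prop:equiv} puts on every object in the zig-zag (this action is the one induced by the central elements $\phi(\xi)=\xi\in\Der(A)\subset\multivect^\bullet(A)\subset C^\bullet(A)$ through the bracket). Since $\del$, $i_0$, $h_0=0$ are manifestly equivariant and perturbation preserves equivariance (Corollary~\ref{cor:MorphismOfPerturbations} applied with $\Phi$ the action of a group element, or directly the functoriality of the perturbation formulas), the maps $i$, $\pi$, $h$ are $\Der(A)$-invariant. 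The identity $h((D))=0$ for $D\in\Der(A)$ follows because $(D)$ lies in the image of $i$ and $hi=0$; similarly $\pi((D))=D$ is the defining property of $\pi$ and $i(D)=(D)$ has already been recorded.

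The main obstacle will be pinning down the homotopy $h$ precisely enough to make the perturbation argument go through — in particular, verifying that the part of $\D$ not captured by $\D_0$ is a genuinely \emph{locally nilpotent} (not merely small) perturbation of the transferred retract, which requires a careful bookkeeping of how the various extensions (the Hochschild differential, the cofree Lie cobracket $\delta^c$, the straight-shuffle extensions of the $q^n$, and the commutative-product extensions) filter the bigraded space $\comm(\mathscr{L}_{C^\bullet(A)}[-1])$ by total tensor length, and checking that each such piece strictly increases that length so that $(\,b\,h\,)^n$ eventually kills any fixed element. Once the filtration is set up correctly, everything else is an application of Corollary~\ref{cor:SpecialDeformationRetract} together with the naturality of all the functors involved, and the three boundary identities on $\Der(A)$ are immediate consequences of $\pi i=\id$, $hi=0$, and the explicit form of $f$.
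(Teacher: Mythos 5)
Your overall strategy coincides with the paper's: first contract $\comm(\mathscr{L}_{C^\bullet(A)}[-1])$ equipped with the ``easy'' differential (factorwise Hochschild differential plus the cofree Lie cobracket) onto $(C^\bullet(A),\partial)$ --- the paper does this in Proposition~\ref{prop:a6} via the dual PBW theorem and Eulerian idempotents, where you invoke standard contractions of cofree conilpotent coalgebras --- and then perturb by the remaining part of the differential using local nilpotency, with equivariance following from naturality of all the constructions. However, there is a genuine error in your justification of the key step. The perturbation $\D-\partial$ is the straight shuffle extension of the $q^{n}$ with $n\geq 2$, each of which collapses $n$ tensor factors of $\colie$ into one, so it strictly \emph{lowers} the total tensor length; local nilpotency then follows because total tensor length is bounded below by zero while the homotopy preserves it. Your claim that the perturbation ``raises tensor/word length'' and that ``each such piece strictly increases that length so that $(bh)^n$ eventually kills any fixed element'' is not a valid argument: a grading that is unbounded above and strictly increased under iteration forces nothing to vanish. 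Since you explicitly single this out as the main point to be verified and then run the filtration in the wrong direction, the proof as written does not close. (A smaller slip: the Gerstenhaber bracket is not part of the differential $Q^1_1$, so ``bracket terms'' should not appear in the perturbation of the underlying cochain complex at all.)

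A second concrete problem is your description of $\pi$ as the naive projection onto the word-length-one, $\colie$-length-one component. That map is not a cochain map for the full differential: $q^{2}$ sends $\colie^{2}$ into $\colie^{1}$, so $p\circ(\D-\partial)\neq 0$. The correct $\pi$ is the perturbed projection $p(\id+bH)^{-1}$ produced by Proposition~\ref{prop:HomologicalPerturbation}; it acquires higher-order corrections, e.g.\ on $(D_1)\vee\cdots\vee(D_k)$ it returns the symmetrized cup product rather than zero --- which is precisely what the paper needs later to identify the first Taylor coefficient of the formality map with $\mathrm{hkr}$. For the three identities actually asserted in the proposition the perturbed and naive projections agree on elements of the form $(D)$ with $D\in\Der(A)$, so your stated conclusions survive, but the map you wrote down is not the one the retract produces. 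Note finally that, as in the paper, the inclusion $i$ is unchanged by the perturbation because $(\D-\partial)\circ i=0$, which is what makes $i(D)=(D)$, $\pi((D))=D$ and $h((D))=H(i(D))=0$ immediate once the retract is in place.
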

\begin{proof}
	Consider the Hochschild complex $(C^\bullet(A), \partial) $. From Prop.\ref{prop:a6}
	we obtain the following special
	deformation retract
	\begin{equation}
	\label{eq:unpert}
		\begin{tikzcd}
			(C^\bullet(A), \partial) \arrow[rr, shift left, "i"]  &  &             (\comm(\colie(C^\bullet(A)[1])[-1]) , \delta+\partial)\arrow[ll,              shift left, "p"] \arrow[loop right,  	distance=4em, start anchor={[yshift=1ex]east}, end anchor={[yshift=-1ex]east}]{}{H} 
		\end{tikzcd} 
	\end{equation}
	Note that the differential on
	$\comm(\colie(C^\bullet(A)[1])[-1])=\comm(\mathscr{L}_{C^\bullet(A)}[-1])$
	in the above diagram is given by the Hochschild differential 
	$\partial$ together with the the Lie cobracket $\delta$.  
	This is not the differential $\D$ from Lemma \ref{lemma:liebialgcolie}.
	Recall that $\D$ encodes the Taylor
	coefficients of the $G_\infty$-structure on $C^\bullet(A)$ of
	the form
	\begin{align*}
		q^n\colon \colie^n(C^\bullet(A)[1])\to  C^\bullet(A)[1]
	\end{align*}
	where $q^1=\partial$. This means in particular that
        $\D-\partial$ is given by the structure maps $q^{\geq 2}$ and therefore
        lowers the total tensor degree in
        $\comm(\mathscr{L}_{C^\bullet(A)}[-1])$. Since $H\colon
        \comm(L_{C^\bullet(A)}[-1])\to \comm(L_{C^\bullet(A)}[-1])$
        keeps the total tensor degree, we have that
        $H\circ(\D-\partial)$ is locally nilpotent. Thus we can apply
        Prop.\ref{prop:HomologicalPerturbation} to obtain a new special deformation
        retract
	\begin{center}
		\begin{tikzcd}
			(C^\bullet(A), \partial) \arrow[rr, shift left, "\tilde{i}"]  &  &             (\comm(\colie(C^\bullet(A)[1])[-1]) , \delta+\D)\arrow[ll,              shift left, "\pi"] \arrow[loop right,  	distance=4em, start anchor={[yshift=1ex]east}, end anchor={[yshift=-1ex]east}]{}{\tilde{H}} 
		\end{tikzcd} 
	\end{center}
	The map $\tilde{i}$ is given by $i$, since $i$ is a chain map
        with respect to the differential and its perturbation, in fact
        we have $(\D-\partial)\circ i=0$.  
        For the same reason the differential
        $\partial$ on $C^\bullet(A)$ does not change. Moreover, for $D\in \Der(A)$,
        that
	\begin{align*}
		\tilde{H}((D)))=H(i(D))=0 \ \text{ and } \ 
		\pi((D))=\pi(i(D))=D. 
	\end{align*}
	Finally, every map of the retract \ref{eq:unpert} is invariant with
        respect to the action of $\Der(A)$ and so is the perturbation.
        This means, in particular, that the maps $i$, $\pi$ and
        $\tilde{H}$ are invariant with respect to $\Der(A)$.
\end{proof}
\begin{remark}\label{Rem:PropHom}
	Note that the construction of the deformation retract in
        Proposition \ref{Prop:HomotopyI} holds for every
        $G_\infty$-algebra fulfilling condition \ref{item:
          Liebial}. The adapted-ness does not have to make sense for
        general $G_\infty$-algebras obviously.
\end{remark}

\subsection{A $\lie g$-adapted formality map}

From now on we consider $A=\Cinfty(M)$, where $M$ is a smooth manifold
and let $G$ be a Lie group acting on $M$ via $\Phi\colon G\times M\to
M$ with corresponding Lie algebra action $\lie{g}\to
\Secinfty(TM)$. 
\begin{definition}[Equivariantly Projective]
	Given a Lie group action 
	$\Phi\colon G\times M \to M$, we say that $M$ is equivariantly 
	projective with respect to $\Phi$ if
	 there exists a vector space $V$ and a
	Lie group morphism $L\colon G\to \mathrm{GL}(V)$, together with vector bundle maps
	\begin{align*}
		I\colon TM\to M\times V \ \text{ and } \
		P\colon M\times V\to TM
	\end{align*}
	covering the identity, such that 
	\begin{align*}
	I\circ d\Phi_g= (\Phi_g\times L_g)\circ I, \ 
	P\circ (\Phi_g\times L_g)
	=d\Phi_g \circ P \ \text{ and } \ 
	P\circ I=\mathrm{id}, 
	\end{align*}
	where $d\Phi_g:TM\rightarrow TM$ denotes the differential of $\Phi_g$.
\end{definition}
\begin{remark}
\label{rem:epb}
	Suppose that the action of $G$ on $M$ is equivariantly projective and let $\{e_i\}_{i\in \mathscr{I}}$ be a linear basis of $V$ and $\{e^i\}_{i\in \mathscr{I}}$ its dual in $V^*$. We will consider $\{e_i\}_{i\in \mathscr{I}}$  and $\{e^i\}_{i\in \mathscr{I}}$ as sections of the trivial bundles
$M\times V\to M$ and $M\times V^*\to M$. Let us denote
by
\begin{align*}
	X_i= P(e_i) \in \Secinfty(TM) \ \text{ and } 
	\alpha^i =e^i\circ I \in \Secinfty(T^*M)
\end{align*}
for all $i\in \mathscr{I}$. 

If we denote by $(L_g)^i_j$ the coefficients of the linear map $L_g$ with respect to the basis 
$e_i$, then we have 
\begin{align*}
	(\Phi_g)_*X_i=(L_g)^j_iX_j \ \text{ and } \ 
	\Phi_g^*\alpha^k= (L_g)^k_\ell \alpha^\ell
\end{align*}
for all $i,k\in \mathscr{I}$. Note that moreover, we have 
for all vector fields $X\in \Secinfty(TM)$ that 
\begin{align*}
	X=\alpha^i(X)X_i
\end{align*}
by construction, i.e. $\{X_i\}_{i\in \mathscr{I}}$ and $\{\alpha^i\}_{i\in \mathscr{I}}$ form a projective dual basis. 
\end{remark}
For future reference let us record the following.
\begin{proposition}
Let $\Phi\colon G\times M \to M$ be a Lie group action.
\begin{propositionlist} 
\item If  $G$ is a compact group and $M$ is a compact manifold, the action $\Phi$ is equivariantly projective;
\item If $\Phi$ is free and proper action of a second countable locally compact  Lie group on a second countable smooth manifold, then it is equivariantly projective. 
\end{propositionlist}
\end{proposition}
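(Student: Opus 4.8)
The plan is to treat the two cases separately, in each realizing $TM$ as a $G$-equivariant direct summand of a trivial bundle $M\times V$ on which $G$ acts through a finite-dimensional representation $L\colon G\to\mathrm{GL}(V)$; then $I$ is the inclusion of this summand, $P$ the associated projection, and both the identity $P\circ I=\id$ and the two equivariance conditions hold by construction. Equivalently, by Remark~\ref{rem:epb} it is enough to produce finitely many sections $X_i\in\Secinfty(TM)$ and $\alpha^i\in\Secinfty(T^{*}M)$ forming a projective dual basis whose spans in $\Secinfty(TM)$ and $\Secinfty(T^{*}M)$ are $G$-stable with $(\Phi_g)_{*}$, resp.\ $\Phi_g^{*}$, acting linearly on them.

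\emph{Case (i): $G$ and $M$ compact.} I would first invoke the Mostow--Palais equivariant embedding theorem: there is a finite-dimensional representation of $G$ which, after averaging an inner product over $G$, may be taken orthogonal, $L\colon G\to\mathrm{O}(V)$, together with a $G$-equivariant embedding $\psi\colon M\hookrightarrow V$, so $\psi\circ\Phi_g=L_g\circ\psi$. Differentiating gives $d\psi_{\Phi_g(m)}\circ d\Phi_g=L_g\circ d\psi_m$, so the fibrewise map $I\colon TM\to M\times V$, $I(v_m)=(m,d\psi_m v_m)$, is injective on fibres (as $\psi$ is an immersion) and satisfies $I\circ d\Phi_g=(\Phi_g\times L_g)\circ I$. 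For $P$ I would take the fibrewise orthogonal projection of $M\times V$ onto the smooth subbundle $I(TM)$, followed by $I^{-1}$; invariance of the inner product and equivariance of $I$ give $I(T_{\Phi_g(m)}M)=L_g(I(T_mM))$, hence $P\circ(\Phi_g\times L_g)=d\Phi_g\circ P$, while $P\circ I=\id$ is immediate. This settles (i).

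\emph{Case (ii): $\Phi$ free and proper.} Here properness and second countability make $B:=M/G$ a second countable smooth manifold and $\pi\colon M\to B$ a smooth principal $G$-bundle (quotient manifold theorem). I would split the $G$-equivariant short exact sequence $0\to T^{\mathrm{vert}}M\to TM\to\pi^{*}TB\to 0$ (with $T^{\mathrm{vert}}M=\ker d\pi$ and the surjection induced by $d\pi$) by choosing a principal connection, which exists because $B$ is paracompact, obtaining a $G$-equivariant isomorphism $TM\cong T^{\mathrm{vert}}M\oplus\pi^{*}TB$. The vertical bundle is equivariantly trivial, $T^{\mathrm{vert}}M\cong M\times\lie g$ via $(m,\xi)\mapsto\xi_M(m)$, with $G$ acting on $\lie g$ by $\Ad$. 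For the other summand I would use the classical fact that every vector bundle over a second countable manifold is a direct summand of a finite-rank trivial bundle, applied to $TB\to B$; pulling the inclusion and the projection back along $\pi$ exhibits $\pi^{*}TB$ as a $G$-equivariant summand of $M\times\mathbb R^{k}$ with $G$ acting trivially on $\mathbb R^{k}$. Combining, $TM$ is a $G$-equivariant summand of $M\times(\lie g\oplus\mathbb R^{k})$ with $L_g=\Ad_g\oplus\id$, which is precisely the asserted equivariant projectivity.

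The remaining verifications — smoothness of the projections, that orthogonal projection onto a smooth subbundle is smooth, and compatibility of the several equivariances — are routine and I would only indicate them. The genuinely external ingredients are the Mostow--Palais theorem in case (i) and, in case (ii), the quotient manifold theorem, the existence of principal connections, and the finite-type summand property of vector bundles over manifolds; no estimates are involved, and the one conceptual point is to choose the decomposition of $TM$ that respects the $G$-action. The main obstacle, such as it is, lies in making sure the chosen splittings and projections are simultaneously smooth and $G$-equivariant, which in both cases is arranged by averaging (for the metric) and by naturality of pullback (for the connection-induced splitting).
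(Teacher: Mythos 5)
Your argument is correct, but it reaches the conclusion by a different route than the paper in both parts. For the compact case the paper simply invokes Segal's theorem from equivariant K-theory (every $G$-vector bundle over a compact $G$-space, $G$ compact, is an equivariant direct summand of $M\times V$ for a finite-dimensional representation $V$), whereas you re-derive the needed instance of that statement for $TM$ from the Mostow--Palais equivariant embedding theorem, taking $I=d\psi$ and $P$ the orthogonal projection for a $G$-averaged inner product; this is more self-contained and produces explicit maps, at the cost of importing a different (comparably heavy) external theorem. For the free and proper case the paper also passes through Palais' theorem that $M\to M/G$ is a principal bundle, but then argues by extracting a \emph{finite} $G$-equivariant cover by sets of the form $\mathcal{O}\times G$ (using finite covering dimension of $M/G$), the parallelizability of $G$, and a partition of unity to build the projective dual basis directly; you instead split $TM\cong T^{\mathrm{vert}}M\oplus\pi^{*}T(M/G)$ by a principal connection, trivialize the vertical bundle equivariantly as $M\times\lie{g}$ with the adjoint action, and realize $\pi^{*}T(M/G)$ as an equivariant summand of a trivial bundle with trivial $G$-action by pulling back a finite-type presentation of $T(M/G)$. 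Your version is structurally cleaner and makes the representation $L_g=\Ad_g\oplus\id$ explicit, while the paper's version is closer in spirit to the projective-dual-basis formulation of Remark~\ref{rem:epb} and avoids choosing a connection; both rest on the same underlying facts (quotient manifold theorem, finite covering dimension of the base, and some form of averaging or naturality to keep everything $G$-equivariant), so neither has a gap the other avoids.
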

\begin{proof}
The claim about actions of compact groups is the content of the theorem of Segal, see \cite{Segal}. If the action is free and proper, a theorem of Palais says that $M$ is a principal $G$-bundle. In particular $M/G$ is a finite dimensional smooth manifold and,  locally, $M$ is of the form $\mathcal{O}\times G$, where  $\mathcal{O}$ is an open subset of $M/G$ with trivial tangent bundle. Since $M/G$ is finite dimensional, given any covering $\mathcal{O}_i$ with $i\in I$ of $M/G$, there exists a finite covering $\{\mathcal{U}_1,\ldots,\mathcal{U}_N\}$ such that each $\mathcal{U}_i$ is a union of a disjoint family of subsets of $\mathcal{O}_i$. In particular, $M$ admits a {\it finite} $G$-equivariant covering  by opens subsets of the form  $\mathcal{O}\times G$ with $\mathcal{O}$ diffeomorphic to an open (generally not connected) open subset of $\mathbb{R}^n$. Since $G$ is parallelisable, the standard partition of unity completes the proof. 
\end{proof}
\begin{remark}
\begin{propositionlist}
\item The above cases can be subsummed under the heading of proper action, and it seems that a proof can be constructed using the methods of the above two cases and Palais' equivariant slice theorem, but the details still need to be worked out.
\item One can also replace algebraic tensor products with completed, say projective, tensor products, since all the spaces appearing have naturally a locally convex nuclear topology (to be more precise these are test functions, differential operators and distributions) in the constructions we are preforming. 
Again in this case the associated notion of ``equivariant projectivity'' follows immediately from the equivariant embedding theorem into a Hilbert space, again due to Palais. 
\end{propositionlist} 
\end{remark}
\begin{proposition}\label{Prop:HomotopyII}
	Let $G$ be a Lie group acting on a smooth manifold $M$, such
        that $M$ is equivariantly projective with respect to this
        action.  Then the Gerstenhaber morphism $f\colon \comm
        (L_{\Cinfty(M)}[-1])\to \multivect^\bullet(M)$ from the
        diagram
	\begin{center}
		\begin{tikzcd}
			\comm (L_{\Cinfty(M)}[-1]) \arrow[dr, "f"']\arrow[r]&
			\comm(\mathscr{L}_{\multivect(M)}[-1])\arrow[d]\\
			& 
			\multivect^\bullet(M)
		\end{tikzcd}
	\end{center} 
	fits
	into a chain homotopy equivalence
	\begin{equation}
		\label{eq:homotopyret}
		\begin{tikzcd} 
			\arrow[loop left, distance=3em, start anchor={[yshift=-1ex]west}, end anchor={[yshift=1ex]west}]{}{h_{\comm (L_{\Cinfty(M)}[-1])}}
			\comm (L_{\Cinfty(M)}[-1])
			\arrow[rr, shift left, "f"] 
			&&  
			\multivect^\bullet(M)
			\arrow[ll, shift left, "g"] 
		\end{tikzcd}
	\end{equation}
	such that $f$, $g$ and $h_{\comm (L_{\Cinfty(M)}[-1])}$ are $G$-equivariant and we have 
	\begin{align*}
		f((\xi_M)) =\xi_M, \ g(\xi_M)=(\xi_M) \ \text{ and }
		h_{\comm (L_{\Cinfty(M)}[-1])}((\xi_M))=0
	\end{align*}
\end{proposition}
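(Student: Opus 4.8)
The plan is to exhibit the retraction data $(g,h)$ essentially by hand, the key tool being the $G$-equivariant dual basis $\{X_i\}_{i\in\mathscr{I}}$, $\{\alpha^i\}_{i\in\mathscr{I}}$ afforded by equivariant projectivity (Remark~\ref{rem:epb}); it is exactly this that will make every map $G$-equivariant. Write $A=\Cinfty(M)$ and $d=\D+\delta^c$ for the differential of $\comm(L_A[-1])$ from Lemma~\ref{lem:dggerst}. I would begin with two structural observations. First, $\comm(L_A[-1])$ is graded by \emph{polyvector degree}, the total number of derivation slots occurring in a generator: this grading is preserved by $d$ (the term $\D$, being the straight shuffle extension of the Gerstenhaber product $q^2$, is additive in polyvector degree, while $\delta^c$ merely deconcatenates) and by the Gerstenhaber morphism $f$, and it matches the decomposition $\multivect^\bullet(M)=\bigoplus_{n\ge0}\Sym^n_A\Der(A)$; hence it suffices to build, for each $n$, mutually compatible $G$-equivariant deformation retracts of the polyvector-degree-$n$ summands onto $\Sym^n_A\Der(A)$. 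Second, neither $\D$ nor $\delta^c$ involves the Lie bracket of $\Der(A)$, so the underlying complex of $\comm(L_A[-1])$ depends only on $A$ as a commutative algebra and on $\Der(A)$ as an $A$-module --- which is what permits the dual basis to intervene.

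I would then fix the chain map $g$. On polyvector degrees $0$ and $1$, that is on $A=\multivect^0$ and $\Der(A)=\multivect^1$, set $g(a)=(a)$ and $g(D)=(D)$, the corresponding tensor-degree-one generators, which are both $\D$- and $\delta^c$-closed. On $\Sym^n_A\Der(A)$ with $n\ge2$ I would use the dual basis: apply $\Sym^n_A(I)$ to land in the free $A$-module $\Sym^n_A(A\otimes V)$, expand the image in the basis $\{a\otimes e_{i_1}\cdots e_{i_n}\}$, and replace each such basis vector by $(a)\vee(X_{i_1})\vee\cdots\vee(X_{i_n})$. Since $P\circ I=\id$ one has $\Sym^n_A(P)\circ\Sym^n_A(I)=\id$, hence $f\circ g=\id$ strictly --- a genuine one-sided deformation retract, which is what is needed because $\multivect^\bullet(M)$ carries the zero differential --- the transformation rules of Remark~\ref{rem:epb} show $g$ is $G$-equivariant, and since $\xi_M$ lies in polyvector degree one we get $g(\xi_M)=(\xi_M)$ with no choice involved.

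The heart of the matter is a $G$-equivariant contracting homotopy $h$ with $\id-g\circ f=d\circ h+h\circ d$ and $h((\xi_M))=0$, which I would obtain by homological perturbation in two moves. First, I would assemble a $G$-equivariant special deformation retract onto $\multivect^\bullet(M)$ for the differential $\delta^c$ alone: its polyvector-degree-zero block is the bar--cobar retract of the commutative algebra $A$ onto $A$ furnished by Proposition~\ref{prop:a6} with $q^1=0$, which is natural in $A$ (the differential $\delta^c$ not even seeing the multiplication) and hence $G$-equivariant; the at-most-one-derivation slot is handled by using $I$ and $P$ to transport it to the free module $A\otimes V$, where the analogous affine contraction is available and natural, and then cutting down to $\Der(A)$ by means of the idempotent chain map induced by $I\circ P$ --- the single step that genuinely uses equivariant projectivity, an arbitrary splitting of $\Der(A)$ off a free module failing to be $G$-equivariant. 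Second, I would reinstate $\D$: it strictly lowers the total tensor degree whereas the homotopy just built preserves it, so $\D$ is a locally nilpotent perturbation, and Proposition~\ref{prop:HomologicalPerturbation} together with Corollary~\ref{cor:SpecialDeformationRetract} upgrade the retract to one for the full differential $\D+\delta^c$, with $f$ left unchanged. Finally I would normalize $h$ so as to vanish on the generators $(\xi_M)$, which is consistent since each $(\xi_M)$ is a cycle with $g\circ f((\xi_M))=(\xi_M)$, so that $\id-g\circ f$ already vanishes there.

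I expect the principal obstacle to be exactly this homotopy, and within it the derivation slot: the complex $\comm(L_A[-1])$ resolves $\Sym_A\Der(A)$ only because $\Der(\Cinfty(M))=\Gamma(TM)$ is projective over $\Cinfty(M)$, and producing a contracting homotopy that is simultaneously explicit, $G$-equivariant and normalized on the $(\xi_M)$'s is precisely what forces the systematic use of the equivariant dual basis rather than a homotopy extracted abstractly from the zig-zag of Theorem~\ref{thm:zigzag}; likewise, the requirement that $f\circ g=\id$ hold strictly, imposed by the vanishing differential on $\multivect^\bullet(M)$, is the reason for prescribing $g$ by the explicit dual-basis formula above instead of reading it off from an abstract quasi-inverse.
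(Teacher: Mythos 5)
Your proposal is correct and follows essentially the same route as the paper: invoke the equivariant projective dual basis from Remark~\ref{rem:epb}, build the deformation retract of $\comm(L_{\Cinfty(M)}[-1])$ onto $\Sym_{\Cinfty(M)}\multivect^1(M)[-1]$ via the appendix constructions and homological perturbation (the paper packages this as the retract \eqref{DefRetract: projMod} of Proposition~\ref{prop:a8}), then redefine the inclusion in polyvector degree one by $g(Y)=(Y)$ so that $g(\xi_M)=(\xi_M)$. The only step you leave implicit is the explicit correction of the homotopy, which the paper realizes as $h_{\comm(L_{\Cinfty(M)}[-1])}=H-H\circ g\circ\Pi$; your consistency check ($(\xi_M)$ is a cycle fixed by $g\circ f$) is precisely what makes that formula work.
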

\begin{proof}
	As discussed in \cite{dolgushevetal:2007}, we note that $L_{\Cinfty(M)}$ is the sum of $\colie(\Cinfty(M)[1])$ and the cofree Lie comodule 
	$T^c\Cinfty(M)[1]\tensor \multivect^1(M)$, i.e. 
	$$
	L_{\Cinfty(M)}=\colie(\Cinfty(M)[1])\oplus
        T^c\Cinfty(M)[1]\tensor \multivect^1(M).
    $$ 
    From Remark \ref{rem:epb} we know that we
        can find a projective dual basis $\{X_i,\alpha_i\}_{i\in
          \mathscr{I}}$ on which the group acts linear. 
          Thus we can use the deformation
        retract \eqref{DefRetract: projMod}, which looks in our case as
	\begin{center} 
		\begin{tikzcd}
			\multivect^\bullet(M)\arrow[d, equal]& &\comm(L_{\Cinfty(M)}[-1])\arrow[d,equal]\\\Sym_{\Cinfty(M)}\mathscr{\multivect}^1(M)[-1] 
			\arrow[rr, shift left, "I"]  &  &
			\comm(\colie(\Cinfty(M)[1])[-1]\oplus T^c\Cinfty(M)[1]\tensor \multivect^1(M)[-1])
			\arrow[ll, 
			shift left, "\Pi"] 
			\arrow[out=240,in=300,loop,swap, "H"]\\
			
		\end{tikzcd}
	\end{center}
	Note that by construction all the maps are
        $G$-invariant. Moreover, we have that $\Pi$ coincides with
        $f$, as discussed in the proof of Prop.~\ref{prop:a8}
        We observe that we have:
	\begin{align*}
		I(\xi_M)=(\alpha^i(\xi_M))(X_i).
	\end{align*}
	Let us define the map $g\colon \multivect^\bullet(M)\to \comm(L_{\Cinfty(M)}[-1])$ by setting
	\begin{align*}
		g(Y)
		=(Y)
	\end{align*}
	for $Y\in \multivect^1(M)$, and $g\at{\multivect^k(M)}=I\at{\multivect^k(M)}$ for $k\neq
        1$. Then we clearly have that $\Pi\circ g=\mathrm{id}$ and
        that $g$ is an $G$-equivariant cochain map, because $G$ acts
        linearly of $X_i$ and $\alpha^i$. Let us define
	\begin{align*}
		h_{\comm (L_{\Cinfty(M)}[-1])}=H-H\circ g\circ \Pi
	\end{align*}
	which is $G$-invariant and moreover, we have that 
	\begin{align*}
		h_{\comm (L_{\Cinfty(M)}[-1])}(\xi_M)=
		H(\xi_M)-H(g(\Pi((\xi_M))))=H(\xi_M)-H(\xi_M)=0 
	\end{align*}
	which concludes the proof. 
\end{proof}

Now we have all the ingredients together to prove the main theorem

\begin{theorem}\label{Thm: ExadaptedLinfty}
	Let $G$ be a Lie group acting on a smooth manifold M, such
        that $M$ is equivariantly projective with respect to this
        action. Then there exists a $\lie{g}$-adapted
        $L_\infty$-morphism
	\begin{align*}
		F\colon T_\poly(M)\to D_\poly(M),
	\end{align*}
	such that the first Taylor component $F_1^1$ is the Hochschild-Kostant-Rosenberg map $\mathrm{hkr}$.
	Moreover, the $F$ can be constructed in a way that its Taylor coefficients are local maps. 
\end{theorem}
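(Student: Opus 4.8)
The plan is to turn the zig-zag of $\Der(A)$-adapted $L_\infty$-algebras supplied by Theorem~\ref{thm:zigzag} and Proposition~\ref{prop:equiv}, specialised to $A=\Cinfty(M)$, into a single $L_\infty$-morphism by inverting the two arrows that point away from the Hochschild side, and then to check that the result is $\lie{g}$-adapted, has $\mathrm{hkr}$ as first Taylor coefficient and is local. First I would note that the fundamental vector field map $\lie{g}\to\Der(\Cinfty(M))$, $\xi\mapsto\xi_M$, is a morphism of Lie algebras, so precomposing the $\Der(A)$-adapted structures of Proposition~\ref{prop:equiv} with it makes every object and every forward arrow in that diagram $\lie{g}$-adapted, using on $T_\poly(M)$ the assignment $\xi\mapsto\xi_M$ and on $D_\poly(M)$ the assignment $\xi\mapsto\Lie_{\xi_M}$. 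Denote by $f\colon\comm(L_A[-1])\to\multivect^\bullet(M)$ the composite of the two lower arrows of Proposition~\ref{prop:equiv} (as in Proposition~\ref{Prop:HomotopyII}), by $p_C\colon\comm(L_A[-1])\to\comm(\mathscr{L}_{C^\bullet(A)}[-1])$ the middle arrow, and by $F_C\colon C^\bullet(A)\to\comm(\mathscr{L}_{C^\bullet(A)}[-1])$ the remaining one; it then suffices to produce $\lie{g}$-adapted $L_\infty$-quasi-inverses $\overline{f}$ of $f$ and $\overline{F_C}$ of $F_C$ and to set $F:=\overline{F_C}\circ p_C\circ\overline{f}$.

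To invert $F_C$, recall from Proposition~\ref{Prop:HomotopyI} that its first Taylor coefficient is the inclusion $i$ of a $\Der(A)$-invariant special deformation retract of $C^\bullet(A)$ into $\comm(\mathscr{L}_{C^\bullet(A)}[-1])$, with projection $\pi$ satisfying $\pi i=\id$ and $\pi((\xi_M))=\xi_M$, and with contracting homotopy annihilating $(\xi_M)$. Taking the homotopy on $C^\bullet(A)$ to be zero (since $\pi i=\id$) and on $\comm(\mathscr{L}_{C^\bullet(A)}[-1])$ this one, the hypotheses of Theorem~\ref{Thm:gQuis} are satisfied --- $F_C$ is $\lie{g}$-adapted by Proposition~\ref{prop:equiv}, $g=\pi$ obeys $g(\psi(\xi))=\phi(\xi)$, and both homotopies kill $\phi(\xi)$, respectively $\psi(\xi)$ --- so Theorem~\ref{Thm:gQuis} yields a $\lie{g}$-adapted $L_\infty$-morphism $\overline{F_C}$ with $(\overline{F_C})_1^1=\pi$. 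To invert $f$, Proposition~\ref{Prop:HomotopyII} --- and this is where the equivariant-projectivity hypothesis on $M$ enters --- provides a $G$-equivariant chain homotopy equivalence $(f,g,h)$ of $\multivect^\bullet(M)$ and $\comm(L_A[-1])$ in which the homotopy on $\multivect^\bullet(M)$ may be taken to be zero because there $f\circ g=\id$, together with $g(\xi_M)=(\xi_M)$ and $h((\xi_M))=0$; as $f$ is $\lie{g}$-adapted, Theorem~\ref{Thm:gQuis} produces a $\lie{g}$-adapted $L_\infty$-morphism $\overline{f}\colon\multivect^\bullet(M)\to\comm(L_A[-1])$ with $\overline{f}_1^1=g$.

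The composite $F=\overline{F_C}\circ p_C\circ\overline{f}\colon T_\poly(M)\to D_\poly(M)$ is again a $\lie{g}$-adapted $L_\infty$-morphism, since $\lie{g}$-adapted morphisms compose. Its first Taylor coefficient equals $\pi\circ(p_C)_1^1\circ g$; since $g$ carries a vector field $Y\in\multivect^1(M)$ to the generator $(Y)$, $(p_C)_1^1$ is the inclusion $L_A\hookrightarrow\mathscr{L}_{C^\bullet(A)}$, and $\pi$ restricts on $\colie^1$-components to the canonical projection onto $C^\bullet(A)$, a direct inspection of these explicit maps on $\Sym_A\Der(A)$ identifies the composite with the Hochschild--Kostant--Rosenberg map $\mathrm{hkr}$, normalising $g$ on $\multivect^{\geq2}(M)$ by means of the projective dual basis of Remark~\ref{rem:epb} if necessary. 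Finally, for locality one carries out the whole construction inside the differentiable, constant-vanishing subcomplex $D_\poly(M)\subseteq C^\bullet(\Cinfty(M))[1]$: every structure map of the zig-zag, the homotopies of Propositions~\ref{Prop:HomotopyI}--\ref{Prop:HomotopyII}, and the recursive expressions for $\widetilde{Q}_\algebra{H}$, $\widetilde{G}$ and the gauge flow $\Phi$ in the proof of Theorem~\ref{Thm:gQuis} only ever compose local (poly)differential operations, so the same is true of $F$. I expect the main obstacle to be precisely this last point: one must verify that the Tamarkin $G_\infty$-structure on $C^\bullet(\Cinfty(M))$ and all contracting homotopies obtained from homological perturbation restrict to $D_\poly(M)$ and are local, so that the recursively built $F$ genuinely takes values in $D_\poly(M)$ with local Taylor coefficients; and, as a related bookkeeping issue, that after fixing the residual normalisation freedom the first Taylor coefficient comes out exactly equal to $\mathrm{hkr}$, not merely homotopic to it.
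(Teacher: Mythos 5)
Your proposal follows essentially the same route as the paper: it assembles the $\Der(\Cinfty(M))$-adapted zig-zag from Proposition~\ref{prop:equiv}, inverts the arrow $C^\bullet(A)\to\comm(\mathscr{L}_{C^\bullet(A)}[-1])$ and the composite $\comm(L_A[-1])\to\multivect^\bullet(M)$ via Theorem~\ref{Thm:gQuis} using exactly the homotopy data of Propositions~\ref{Prop:HomotopyI} and~\ref{Prop:HomotopyII}, composes, identifies the first Taylor coefficient with $\mathrm{hkr}$ through the explicit formulas for $\pi$ and $g$, and restricts to the sub-$G_\infty$-algebra $D_\poly(M)$ for locality --- all of which matches the paper's argument. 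The point you flag as the main obstacle is handled in the paper by Remark~\ref{Rem:PropHom} (the retract construction applies verbatim to $D_\poly(M)$) and by the naturality of the deformation retracts.
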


\begin{proof}
	Summing up the results from	
	 the Propositions \ref{prop:equiv}, \ref{Prop:HomotopyI} and
        \ref{Prop:HomotopyII} in the case $A=\Cinfty(M)$, we obtain
	the following diagram:
	\begin{center}
		\begin{tikzcd}
			\comm(\mathscr{L}_{C^\bullet(\Cinfty(M))}[-1]) [1]\arrow[d, shift left,  "P"] & 
			\comm (L_{\Cinfty(M)}[-1])[1] \arrow[l]\arrow[r]&
			\comm(\mathscr{L}_{\multivect(M)}[-1])[1]\arrow[d]\\
			C^\bullet(\Cinfty(M))[1]\arrow[u, shift left]& & 
			\multivect^\bullet(M)\arrow[ul, "G"]
		\end{tikzcd}
	\end{center} 
	where all the maps are $\lie{g}$-adapted $L_\infty$-morphisms.
	This implies that  we obtain a $\lie{g}$-adapted $L_\infty$-morphism
	$\tilde{F}\colon T_\poly(M)\to C^\bullet(M)[1]$
	which can be immediately upgraded to a $\lie{g}$-adapted $L_\infty$-morphism
	$
		F\colon T_\poly(M)\to D_\poly(M)
	$.
	This is possible as $D_\poly(M)[-1]$ is a sub-$G_\infty$ algebra of
$C^\bullet(\Cinfty(M))$ containing the fundamental vector fields of
the Lie group action, see Remark~\ref{Rem:PropHom}. 
 $P_1$ coincides with the map $\pi$ from
Proposition \ref{Prop:HomotopyI}: for $D_1,\dots, D_k$, one obtains
that
\begin{align*}
	\pi((D_1)\cdots (D_k))=\frac{1}{k!}\sum_{\sigma\in S_k}D_{\sigma(1)}\cup 
	\cdots \cup D_{\sigma(k)}
\end{align*}
by using eq. \eqref{Eq: Homotopy} and the
fact that the lowest order of the $G_\infty$-structure of
$D_\poly(M)$ is the symmetrization of
the cup product. This means in particular that the first Taylor
coefficient of $F$ is exactly the Hochschild-Kostant-Rosenberg map. Note that 
because of the naturality of the deformation retracts in Prop.\ref{Prop:HomotopyI} and 
and Prop.\ref{Prop:HomotopyII}, we see immediately, that all the Taylor coefficients of $F$ are local maps.  
\end{proof}

\begin{remark}
	It should be easy to see that the Taylor coefficients of the
        $L_\infty$-morphism are differential operators, since the
        structure maps of the $G_\infty$ are, at least when restricted
        to $D_\poly(M)$, since they are just linear combinations of
        the braces defined on $D_\poly(M)$.
\end{remark}
Finally we obtain
\begin{theorem}
	\label{Thm: ExQuantumMoment}
		Let $G$ be a Lie group acting on a manifold $M$ such that $M$
        is equivariantly projective with respect to this action. Then
        there is a one-to-one correspondence between equivalence
        classes of $G$-invariant star product with quantum momentum maps
        and equivalence classes of $G$-invariant formal Poisson
        structures with formal momentum maps.
\end{theorem}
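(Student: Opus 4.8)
The plan is to deduce the classification statement from the existence of the $\lie{g}$-adapted $L_\infty$-morphism $F\colon T_\poly(M)\to D_\poly(M)$ produced in Theorem~\ref{Thm: ExadaptedLinfty}, by first lifting $F$ to a morphism of the equivariant (curved) Lie algebras and then translating the Maurer--Cartan correspondence. First I would invoke Theorem~\ref{Thm: ExadaptedLinfty}: since $M$ is equivariantly projective, there is a $G$-invariant $L_\infty$-morphism $F$ with $F_1^1=\mathrm{hkr}$ which is $\lie{g}$-adapted with respect to $\phi=\lambda\colon \xi\mapsto \xi_M$, so $F_1^1(\xi_M)=\Lie_{\xi_M}$ and $F_{k+1}^1(\xi_M\vee \argument)=0$ for all $k\geq 1$. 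By the Corollary following Proposition~\ref{prop:CompatibilityMorphismCurvatur}, this is exactly the condition needed to lift $F$ to a curved $L_\infty$-morphism
\begin{equation*}
	F^\lie{g}\colon T_\lie{g}(M)\longrightarrow D_\lie{g}(M)
\end{equation*}
of the curved Lie algebras from Definitions~\ref{def:equiv1} and~\ref{def:equiv2}, via the formula \eqref{Eq: ExtForm}, where the curvature $\lambda$ on each side is the central element sending $\xi$ to $\xi_M$, resp.\ $\Lie_{\xi_M}$.

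Next I would set up the bijection at the level of Maurer--Cartan elements. By Lemma~\ref{lem:MChamiltonian}, the curved Maurer--Cartan elements of $T_\lie{g}(M)$ (suitably $\hbar$-filtered, i.e.\ elements of $\hbar\, T^1_\lie{g}(M)[[\hbar]]$ after rescaling the curvature by $\hbar^2$ as in \eqref{eq:MCquantum}) are precisely pairs $(\pi,J)$ consisting of a $G$-invariant formal Poisson structure together with a formal momentum map; by Lemma~\ref{Lem: MCtoStar}, the corresponding curved Maurer--Cartan elements of $D_\lie{g}(M)[[\hbar]]$ are precisely pairs $(m_\star,H)$, i.e.\ $G$-invariant star products with quantum momentum maps, i.e.\ equivariant star products. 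A curved $L_\infty$-morphism maps curved Maurer--Cartan elements to curved Maurer--Cartan elements and gauge-equivalent ones to gauge-equivalent ones; since $F^\lie{g}$ begins with $\mathrm{hkr}$ in degree one and the extension \eqref{Eq: ExtForm} is compatible with the brackets and (trivial) differentials, $F^\lie{g}$ sends a pair $(\pi,J)$ to an equivariant star product whose underlying Poisson structure is $\pi$ and whose classical momentum map is $J_0$. This gives a well-defined map from equivalence classes of the former to equivalence classes of the latter.

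To prove this map is a bijection, I would use that $F$ is not merely a quasi-isomorphism but a $\lie{g}$-adapted homotopy equivalence: by Theorem~\ref{Thm:gQuis}, applied with the chain homotopy equivalence data assembled in Propositions~\ref{Prop:HomotopyI} and~\ref{Prop:HomotopyII} (whose homotopies kill the fundamental vector fields and are $G$-equivariant), $F$ admits a $\lie{g}$-adapted quasi-inverse $G$ with $F\circ G$ and $G\circ F$ homotopic to the identities through $\lie{g}$-adapted homotopies. Lifting $G$ to a curved morphism $G^\lie{g}$ by the same mechanism, and noting that the $\lie{g}$-adapted homotopies also lift (the homotopy clause of Proposition~\ref{prop:CompatibilityMorphismCurvatur}, using $\lambda_t(\phi(\xi)\vee\argument)=0$), one gets that $F^\lie{g}$ and $G^\lie{g}$ induce mutually inverse maps on the sets of gauge-equivalence classes of curved Maurer--Cartan elements. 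Combining with the two dictionaries from Lemmas~\ref{lem:MChamiltonian} and~\ref{Lem: MCtoStar} yields the claimed one-to-one correspondence. The main obstacle is the bookkeeping around the curvature and the $\hbar$-filtration: one must check that the homotopies between $F^\lie{g}\circ G^\lie{g}$ and the identity respect the filtration by powers of $\hbar$ (so that they converge and restrict to the formal Maurer--Cartan loci appearing in Lemmas~\ref{lem:MChamiltonian} and~\ref{Lem: MCtoStar}), and that the gauge equivalences produced on the $D_\lie{g}(M)$ side are exactly the equivalences of equivariant star products in the usual sense; everything else is a formal consequence of the preceding results.
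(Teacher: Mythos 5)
Your overall architecture matches the paper's proof exactly: take the $\lie{g}$-adapted $F$ from Theorem~\ref{Thm: ExadaptedLinfty}, produce a $\lie{g}$-adapted quasi-inverse via Theorem~\ref{Thm:gQuis}, lift everything to the $\hbar$-rescaled curved Lie algebras $\big(T_\lie{g}(M)[[\hbar]],\hbar^2\lambda,0,[\cdot,\cdot]_\lie{g}\big)$ and $\big(D_\lie{g}(M)[[\hbar]],\hbar^2\lambda,\partial_\lie{g},[\cdot,\cdot]_\lie{g}\big)$ using Proposition~\ref{prop:CompatibilityMorphismCurvatur}, and then read off the bijection on equivalence classes of Maurer--Cartan elements through the dictionaries of Lemma~\ref{lem:MChamiltonian} and Lemma~\ref{Lem: MCtoStar}. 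Your handling of the curvature lift and of the homotopy clause of Proposition~\ref{prop:CompatibilityMorphismCurvatur} is also the intended one.

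There is, however, one genuine gap: the source of the chain-level homotopy equivalence data needed to apply Theorem~\ref{Thm:gQuis} to $F$ itself. You claim this data is ``assembled in Propositions~\ref{Prop:HomotopyI} and~\ref{Prop:HomotopyII}'', but those propositions provide retracts of the \emph{auxiliary} complexes $\comm(\mathscr{L}_{C^\bullet(A)}[-1])$ and $\comm(L_{\Cinfty(M)}[-1])$ onto $C^\bullet(A)$ and $\multivect^\bullet(M)$ respectively; they were already consumed in the proof of Theorem~\ref{Thm: ExadaptedLinfty} to invert the wrong-way arrows of the zig-zag and produce $F$ in the first place. To apply Theorem~\ref{Thm:gQuis} to $F\colon T_\poly(M)\to D_\poly(M)$ one needs $G$-equivariant maps $p\colon D_\poly(M)\to T_\poly(M)$ and $h\colon D_\poly(M)\to D_\poly(M)$ with $\id-\mathrm{hkr}\circ p=[\partial,h]$, $p(\Lie_{\xi_M})=\xi_M$ and $h(\Lie_{\xi_M})=0$. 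This does not follow by concatenating the two propositions, because the zig-zag also contains the inclusion $\comm(L_A[-1])\hookrightarrow\comm(\mathscr{L}_{C^\bullet(A)}[-1])$, which is never chain-homotopy-inverted anywhere in the paper, so no composite homotopy on $D_\poly(M)$ is available from that route. The paper imports this ingredient as an external input, namely the $G$-equivariant special deformation retract for the HKR map from \cite[Theorem 6.16]{dippell:2024}. Without that (or an equivalent equivariant global homotopy for differential Hochschild cochains) your argument does not close; with it, the rest of your proof goes through as written.
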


\begin{proof}
	Let $
		F\colon T_\poly(M)\to D_\poly(M)
	$ be the $\lie{g}$-adapted morphism we have constructed in Theorem \ref{Thm: ExadaptedLinfty}, 
	having $F_1=\mathrm{hkr}$.
	Using the
	results of \cite[Theorem 6.16]{dippell:2024}, we can find $G$-equivariant maps
	$h\colon D_\poly(M)\to D_\poly(M)$ and $p\colon D_\poly(M)\to
	T_{\poly}(M)$, such that
	\begin{center}
		\begin{tikzcd}
			T_\poly(M) \arrow[rr, shift left, "\mathrm{hkr}"]   
			&       &      (D_\poly(M), \partial)\arrow[ll, shift left, "p"] 
			\arrow[loop right, distance=4em, start anchor={[yshift=1ex]east}, end anchor={[yshift=-1ex]east}]{}{h} 
		\end{tikzcd} 
	\end{center}
	is a special deformation retract and such that the maps $\mathrm{hkr}$, $p$ and
	$h$ satisfy the hypothesis of Theorem \ref{Thm:gQuis}. Therefore,
	we can construct a $\lie{g}$-adapted quasi-inverse $G$ of the map
	$F$. Moreover, we can achieve $G\circ F=\mathrm{id}$ and the homotopy
	between $F\circ G$ and the identity can be chosen to be
	$\lie{g}$-adapted, as discussed in Theorem \ref{Thm:gQuis}.
	Consider now the
	curved Lie algebras
	$
		\big(T_\lie{g}(M)[[\hbar]],\hbar^2\lambda,0, [\cdot,\cdot]_\lie{g}\big)
	$
	and 
	$
		\big(D_\lie{g}(M)[[\hbar]],\hbar^2\lambda, \partial_\lie{g}, [\cdot,\cdot]_\lie{g}\big)
	$
	which are $\hbar$-rescaled versions of the corresponding curved Lie
	algebras from Definitions~\ref{def:equiv1}, \ref{def:equiv2}, resp. By Proposition
	\ref{prop:CompatibilityMorphismCurvatur} we can extend the $\lie{g}$-adapted morphism 
	$F$ to a curved
	$L_\infty$-morphism
	\begin{align}\label{Eq: scaledcurvedMorph}
		F^\lie{g}\colon T_\poly(M)[[\hbar]]\to D_\poly(M)[[\hbar]].
	\end{align}
	Note that $G$-invariant formal Poisson structures with formal
	momentum maps are exactly Maurer-Cartan elements of $
	\big(T_\lie{g}(M)[[\hbar],\hbar^2\lambda,0,
	[\cdot,\cdot]_\lie{g}\big)$ and $G$-invariant star product
	with quantum momentum maps are Maurer-Cartan elements of
	$\big(D_\lie{g}(M)[[\hbar]],\hbar^2\lambda,
	\partial_\lie{g}, [\cdot,\cdot]_\lie{g}\big)$ and thus,
	since $F^\lie{g}$ is a homotopy equivalence, the claim
	follows directly.
\end{proof}

%
%
\section*{Outlook}
\label{sec:outlook}

The above results open up many directions for future research.  Here,
we outline the two most immediate directions in more detail.


The first question concerns the
\emph{quantization-reduction diagram} in the realm of deformation
quantization of Poisson manifolds.  The existence of a quantum momentum
map obtained for a Poisson structure $\pi$ with a momentum map $J$
implies that one can perform a BRST-like reduction for star products
(see \cite{bordemann.herbig.waldmann:2000a}).  The question which is
still to be answered is what the relation of the reduced star product
and the reduced Poisson structure is.
This question can now be completely formulated in terms of curved
$L_\infty$ morphisms: let $J\in (\lie{g}^*\tensor \Cinfty(M))$ be an
equivariant map, such that $0\in \lie{g}^*$ is a regular value of the
induced map $M\to\lie{g}^*$. Moreover, we consider now the curved
$L_\infty$-morphism $$ F^\lie{g}\colon
\big(T_\lie{g}(M)[[\hbar]],\hbar\lambda,0, [\cdot,\cdot]_\lie{g}\big)
\longrightarrow \big(D_\lie{g}(M)[[\hbar]],\hbar\lambda,
\partial_\lie{g}, [\cdot,\cdot]_\lie{g}\big) $$ we constructed in this paper.
We can twist this morphism by $-J$ to obtain a curved morphism (see
\cite{kraft:2024} for details of twisting of curved morphisms):
\begin{align*}
	F^\lie{g}_J\colon 
	\big(T_\lie{g}(M)[[\hbar]],\hbar\lambda,-[J,\cdot], [\cdot,\cdot]_\lie{g}\big)
	\to 
	\big(D_\lie{g}(M)[[\hbar]],\hbar\lambda,
	\partial_\lie{g}-[J,\cdot], [\cdot,\cdot]_\lie{g}\big).
\end{align*}
The corresponding Maurer--Cartan elements are now invariant 
formal Poisson structures with formal momentum maps
shaving $J$ as the zero order and invariant star products with 
quantum momentum maps fulfilling the same property, respectively.  
Here one should notice that 
\begin{align*}
	F^\lie{g}_k(J^{\vee k})=0
\end{align*}
for $k\geq 2$, since $F^\lie{g}_k(J^{\vee k})\in
(\Sym^{k}\lie{g}^*\tensor D_\poly^{1-2k}(M))^G$. For the same reason
the twisted morphism is well-defined.
It is important to remark that the construction
of our $\lie{g}$-adapted morphism implicitly depends on the choice of
a Drinfel'd associator, since the $G_\infty$-structure on $D_\poly(M)$
does.  Using the reduction morphisms $T_\red$ and $D_\red$ from
\cite{esposito:2022,esposito:2023}, we obtain the following
conjecture.
\begin{conjecture}
	Let $F\colon T_\poly(M)\to D_\poly(M)$ be a $\lie{g}$-adapted
        $L_\infty$ morphism and let $G\colon T_\poly(M_\red)\to
        D_\poly(M_\red)$ be the $L_\infty$-quasi-isomorphism obtained by the zig-zag in \cite{dolgushevetal:2007},
         both constructed
        with respect to the same Drinfel'd associator, then the
        diagram of curved $L_\infty$-morphisms
	\begin{center}
		\begin{tikzcd}
			\big(T_\lie{g}(M)[[\hbar],\hbar\lambda,-[J,\cdot], [\cdot,\cdot]_\lie{g}\big)
			\arrow[rr, "F^\lie{g}_J"] 
			\arrow[d, "T_\red"]
			& &
			\big(D_\lie{g}(M)[[\hbar],\hbar\lambda,
			\partial_\lie{g}-[J,\cdot], [\cdot,\cdot]_\lie{g}\big)
			\arrow[d, "D_\red"]\\
			(T_\poly(M_\red)[[\hbar]],[\cdot,\cdot])\arrow[rr, "G"]
			& &
			(D_\poly(M_\red)[[\hbar]],\partial, [\cdot,\cdot])
		\end{tikzcd}
	\end{center}
	commutes up to homotopy. 
\end{conjecture}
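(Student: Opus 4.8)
The plan is to deduce the homotopy commutativity from the \emph{naturality} of the whole construction under the reduction procedure, combined with the homotopy-uniqueness of formality morphisms sharing the same linear part and the same associator. Recall that $F^\lie{g}_J$ is the $(-J)$-twist of the curved extension $F^\lie{g}$ of the $\lie{g}$-adapted morphism $F$ of Theorem~\ref{Thm: ExadaptedLinfty}, which is itself assembled from the zig-zag of Theorem~\ref{thm:zigzag} for $A=\Cinfty(M)$; likewise $G$ arises from the same zig-zag for $A=\Cinfty(M_\red)$, with the identical Drinfel'd associator entering the $G_\infty$-structure on $D_\poly$ on both sides. The reduction morphisms $T_\red$ and $D_\red$ of \cite{esposito:2022,esposito:2023} implement restriction to $C=J^{-1}(\{0\})$ followed by passage to $G$-invariants. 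The first thing I would do is record that, because $0\in\lie{g}^*$ is a regular value, this BRST-type reduction (in the sense of \cite{bordemann.herbig.waldmann:2000a}) carries the $\hbar\lambda$-curved, $J$-twisted curved Lie algebras at the top of the diagram to the \emph{honest} graded Lie algebras at the bottom: the curvature $\hbar\lambda$ and the twisting term $-[J,\argument]$ are absorbed into the ordinary Hochschild differential $\partial$ on $M_\red$ on the polydifferential side, and into the trivial differential on the polyvector side. This is the step where the regular-value hypothesis is indispensable.

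Next I would promote the reduction from a map of endpoints to a \emph{morphism of zig-zags}. The zig-zag of Theorem~\ref{thm:zigzag} is functorial in the commutative algebra, so at each node $\comm(\mathscr{L}_{C^\bullet(A)}[-1])$, $\comm(L_A[-1])$, $\comm(\mathscr{L}_{\multivect(A)}[-1])$ one obtains a reduction $G_\infty$-morphism from the equivariant complex on $M$ to the corresponding node on $M_\red$. The crucial verification is that these reduction maps intertwine the horizontal $G_\infty$-quasi-isomorphisms up to homotopy. For the purely Gerstenhaber nodes this is immediate, since the brackets and products are defined without reference to the associator; the delicate point is the node carrying the Hochschild-side $G_\infty$-structure, whose higher braces depend on the associator. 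Here the hypothesis that $F$ and $G$ are built from the \emph{same} associator forces the reduced braces on $D_\poly(M_\red)$ to agree with the reduction of the equivariant braces, so that reduction is genuinely a $G_\infty$-morphism at that node as well.

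Having a reduction morphism of zig-zags, the two composites $D_\red\circ F^\lie{g}_J$ and $G\circ T_\red$ appear as the two boundary edges of a square of zig-zags that commutes up to a coherent system of intermediate homotopies. I would then splice these into a single homotopy between the composite $L_\infty$-morphisms, transporting the intermediate homotopies across the quasi-inverses by the homological perturbation machinery of Section~\ref{sec:Preliminaries} (Corollary~\ref{cor:MorphismOfPerturbations}) and the $\lie{g}$-adapted quasi-inverse construction of Theorem~\ref{Thm:gQuis}; the compatibility of this splicing with the curved and twisted structures is governed by Proposition~\ref{prop:CompatibilityMorphismCurvatur}. As a consistency check, and as an alternative route, both composites have first Taylor coefficient equal to the reduced map $\mathrm{hkr}_{M_\red}$, because the Hochschild--Kostant--Rosenberg map is natural under restriction and invariants; one may therefore instead invoke homotopy-uniqueness of $L_\infty$-morphisms with prescribed linear part, once the relevant target obstruction space is shown to vanish.

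The hard part will be the middle step: establishing that reduction is compatible, up to a controllable homotopy, with the associator-dependent braces at the Hochschild node, and that the resulting node-wise homotopies splice coherently along the full zig-zag rather than merely at the endpoints. Equivalently, one must check that the twist by $-J$ on $M$ and the induced twist on $M_\red$ are intertwined by $D_\red$ through \emph{all} Taylor coefficients of the reduction morphism --- a compatibility that is transparent in degree one but requires genuine work, and the coherence of the associator, in higher degrees.
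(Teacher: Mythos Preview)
The statement you are attempting to prove is a \emph{conjecture} located in the paper's Outlook section; the paper provides no proof, and indeed presents it explicitly as open (``these work will be carried out in future projects''). So there is no paper proof to compare your proposal against.

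As for the proposal itself: it is a plausible strategic outline, not a proof. You yourself flag the essential gap at the end --- the compatibility of the associator-dependent $G_\infty$-braces with reduction, and the coherent splicing of node-wise homotopies along the whole zig-zag. This is exactly the content that would constitute a proof of the conjecture, and you have not supplied it; you have only named it. In particular, the claim that ``the hypothesis that $F$ and $G$ are built from the same associator forces the reduced braces on $D_\poly(M_\red)$ to agree with the reduction of the equivariant braces'' is asserted, not argued: the Tamarkin $G_\infty$-structure is built from the associator acting on Hochschild cochains of a fixed algebra, and it is not automatic that the BRST-type reduction map $D_\red$ of \cite{esposito:2023} intertwines these higher braces even up to homotopy. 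Likewise, your appeal to ``homotopy-uniqueness of $L_\infty$-morphisms with prescribed linear part'' requires a vanishing-of-obstructions argument that you do not provide and that is not available from the paper's results. What you have written is a reasonable research plan for attacking the conjecture, in the spirit the authors themselves suggest, but it does not close the problem.
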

Moreover, we believe that the diagram above can shed new light on the \emph{quantum anomalies}
which appeared in \cite{cattaneo} for general coisotropic submanifolds. 

The second natural question concerns formality for chains, that has
been proven in \cite{shoikhet}. More precisely, one may wonder wether
the formality map obtained for Hochschild chains can be lifted to
equivariant Hochschild chains and equivariant differential forms. This
would open up the study of equivariant algebraic index theory. We
believe that a good starting point to generalize our results to chains
would be proving formality for chains using the dequantization functor
from \cite{enriquez:2005}, from the point of view developed in
\cite{severa}.
 Nevertheless, these work will be carried out in future projects.

\begin{appendix}

\section{Cofree Lie coalgebras}

\subsection{The dual of the Poincaré-Birkhoff-Witt Theorem for cofree conilpotent Lie coalgebras}
	
In this appendix we discuss the dual version of the
Poincaré-Birkhoff-Witt theorem for the cofree Lie coalgebra cogenerated by a graded
vector space. Note that there are proofs for locally finite Lie coalgebras
(see \cite{michaelis,block}) but none of the existing results is
suitable for our needs, so we give a proof here. The content of this subsection is
basically included in
\cite{fresse}, but for convenience we make it precise here for conilpotent bialgebras.
	
Let $(B,\mu,\iota, \Delta, \epsilon)$ be a graded conilpotent
commutative bialgebra over a ring $\Bbbk$ of characteristic $0$.  We
can define the convolution product on $\star$ on $\mathrm{Hom}(B,B)$
by
\begin{align*}
	f\star g
	=
	\mu \circ (f\tensor g)\circ \Delta 
\end{align*}
which turns $\mathrm{Hom}(B,B)$ into a graded algebra. Note that the
unit of $(\mathrm{Hom}(B,B),\star)$ is given by $\iota\circ \epsilon$.
One can easily compute the following identity
\begin{align}\label{Eq:CompConvConc}
	\id^{\star k}\circ \id^{\star \ell}
	=
	\id^{\star k\ell}
\end{align}
for $k,\ell\geq 0$, where we interpret $H^{\star 0}=\iota\circ \epsilon$
for all $H\in \mathrm{Hom}(B,B)$.  Let us define the \emph{ first
Eulerian idempotent} by
\begin{align*}
	e^{(1)}=\log_\star(\id)=\sum_{k=1}^\infty \frac{(-1)^{k+1}}{k}(\id-\iota\circ \epsilon)^{\star k}=
	\sum_{k=1}^\infty \frac{(-1)^{k+1}}{k} P^{\star k},
\end{align*} 
where in the last equality we set $P:=\id- \iota\circ\epsilon$. 
Note that $e^{(1)}$ is well-defined, since $B$ is conilpotent.
\begin{lemma}
	The first Eulerian idempotent $e^{(1)}\in
            \mathrm{Hom}(B,B)$ is and idempotent,
            i.e. $e^{(1)}\circ e^{(1)}=e^{(1)}$.
\end{lemma}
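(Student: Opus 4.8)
The plan is to realise the first Eulerian idempotent $e^{(1)}$ as the linear coefficient of the convolution powers $\id^{\star k}$, viewed as a polynomial in $k$, and then to read off $e^{(1)}\circ e^{(1)}=e^{(1)}$ (in fact the orthogonality of the whole family of Eulerian idempotents) directly from the composition identity \eqref{Eq:CompConvConc} by comparing polynomial coefficients. The point is that \eqref{Eq:CompConvConc} is exactly the multiplicativity $\Psi^k\circ\Psi^\ell=\Psi^{k\ell}$ of the Adams operations $\Psi^k:=\id^{\star k}$, and diagonalising this family in $k$ produces precisely the Eulerian idempotents as the projections onto the weight spaces.

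Concretely, I would proceed as follows. Since $B$ is conilpotent, $P=\id-\iota\circ\epsilon$ kills $1$ and lands in $\ker\epsilon$, so for a fixed $b\in B$ lying in the $n$-th stage of the coradical filtration one has $P^{\star j}(b)=0$ for $j>n$ (the $j$-fold convolution factors through the iterated reduced coproduct, and the coradical filtration is an algebra filtration so the multiplications do not push $b$ out of stage $n$). Writing $\id=\iota\circ\epsilon+P$ and expanding,
\[
	\id^{\star k}=\sum_{j\ge 0}\binom{k}{j}P^{\star j}
\]
is, when evaluated on any element, a finite sum and hence a polynomial in $k$ with operator-valued coefficients. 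Re-expanding the binomials $\binom{k}{j}$ in the monomial basis $\{k^{i}\}$ I may write $\id^{\star k}=\sum_{i\ge 0}k^{i}e^{(i)}$ with $e^{(0)}=\iota\circ\epsilon$, and computing the linear coefficient of $\binom{k}{j}$ (which is $\tfrac{(-1)^{j+1}}{j}$) identifies $e^{(1)}=\sum_{j\ge 1}\tfrac{(-1)^{j+1}}{j}P^{\star j}$, i.e. exactly the first Eulerian idempotent of the statement. Now I substitute this expansion into \eqref{Eq:CompConvConc}: for all positive integers $k,\ell$,
\[
	\sum_{i,j\ge 0}k^{i}\ell^{j}\bigl(e^{(i)}\circ e^{(j)}\bigr)
	=\id^{\star k}\circ\id^{\star \ell}
	=\id^{\star k\ell}
	=\sum_{m\ge 0}(k\ell)^{m}e^{(m)},
\]
an identity of polynomials in $(k,\ell)$ once evaluated on any fixed element (all sums finite, of bidegree at most $(n,n)$). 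Matching the coefficient of $k^{i}\ell^{j}$ yields $e^{(i)}\circ e^{(j)}=\delta_{ij}\,e^{(i)}$; taking $i=j=1$ gives $e^{(1)}\circ e^{(1)}=e^{(1)}$.

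The one genuinely delicate point — which I would state carefully rather than grind through — is the conilpotency bookkeeping: that $P$, hence $e^{(1)}$, is locally $\star$-nilpotent and preserves/strictly controls the coradical filtration, so that $\id^{\star k}$ is honestly a polynomial in $k$ on each element and the final coefficient comparison is a finite-dimensional statement. Everything else is routine, and no commutativity of $B$ is used beyond what is already packaged into \eqref{Eq:CompConvConc}. An essentially equivalent route, which I could mention, is to use that the same conilpotency makes $\exp_\star$ and $\log_\star$ well defined and mutually inverse, so that $\id=\exp_\star(e^{(1)})$ and, by self-commutation of $e^{(1)}$ under $\star$, $\id^{\star k}=\exp_\star\!\bigl(k\,e^{(1)}\bigr)=\sum_{i\ge 0}k^{i}\tfrac{1}{i!}(e^{(1)})^{\star i}$; comparing coefficients in \eqref{Eq:CompConvConc} then gives the same conclusion.
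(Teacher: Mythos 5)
Your proof is correct, but it takes a different route from the paper's. The paper proves the lemma by brute force: it expands $e^{(1)}$ in the basis $\{\id^{\star i}\}$, applies the identity \eqref{Eq:CompConvConc} to the double sum $e^{(1)}\circ e^{(1)}$, recognizes the inner sum as $\log_\star(\id^{\star i})=i\log_\star(\id)$, and collapses everything with the binomial identity $\sum_i\binom{k}{i}(-1)^{k-i}i=k(1-1)^{k-1}$. You instead package the whole family $\{\id^{\star k}\}_k$ as a single operator-valued polynomial in $k$ (the Adams-operation viewpoint), define $e^{(i)}$ as its monomial coefficients, and read off $e^{(i)}\circ e^{(j)}=\delta_{ij}e^{(i)}$ from \eqref{Eq:CompConvConc} by comparing coefficients of $k^i\ell^j$; the lemma is the case $i=j=1$. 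Both arguments rest on the same two pillars (conilpotency to make the sums finite, and \eqref{Eq:CompConvConc}), but your version avoids the step $\log_\star(\id^{\star i})=i\log_\star(\id)$, which the paper uses without comment and which itself requires a small justification, and it proves strictly more: the full orthogonality statement that the paper only establishes in the subsequent theorem --- and establishes there by essentially your coefficient-comparison argument. The one point you flag as delicate (that the coradical filtration controls the degree of the polynomials so that the comparison is legitimate over a characteristic-zero ground ring) is indeed the only thing that needs care, and your sketch of it is adequate.
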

\begin{proof}
	Let us write $$e^{(1)}=\sum_{k=1}^\infty
            \frac{(-1)^{k+1}}{k}\sum_{i=0}^k\binom{k}{i}(-1)^{k-i}\id^{\star
              i},$$ which means we have using Equation
            \eqref{Eq:CompConvConc}
	\begin{align*}
		e^{(1)}\circ e^{(1)}&=
		\sum_{k=1}^\infty \sum_{l=1}^\infty\frac{(-1)^{k+l+2}}{kl}\sum_{i=0}^k\sum_{j=0}^l
		\binom{l}{j}\binom{k}{i}(-1)^{k+l-j-i}\id^{\star ij}\\&
		=
		\sum_{k=1}^\infty\frac{(-1)^{k+1}}{k}
		\sum_{i=0}^k\binom{k}{i}(-1)^{k-i}\log_\star(\id^{\star i})\\&
		=
		\sum_{k=1}^\infty\frac{(-1)^{k+1}}{k}
		\sum_{i=0}^k\binom{k}{i}(-1)^{k-i}i\log_\star(\id)\\&
		=
		\sum_{k=1}^\infty(-1)^{k+1}(1-1)^{k-1}\log_\star(\id)\\&
		=\log_\star(\id)=e^{(1)}
	\end{align*}
\end{proof}
Note that now we have that 
\begin{align*}
	\id
	=
	\exp_{\star}({\log_\star(\id)})=\exp_{\star}({e^{(1)}})
	=
	\sum_{k=0}^\infty \frac{(e^{(1)})^{\star k}}{k!}
\end{align*}
where we define now the $k$-th Eulerian idempotent by
$e^{(k)}=\frac{(e^{(1)})^{\star k}}{k!}$.  Let us now take a
closer look to $\id^{\star k}$ for some $k\geq 2$. We have
that
\begin{align*}
	\id^{\star k}=\exp_\star(\log_\star(\id^{\star k}))= \exp_\star(k\log_\star(\id))=\sum_{i=0}^\infty k^ie^{(i)}. 
\end{align*}
Using now Equation~\eqref{Eq:CompConvConc} 
 we obtain the following claim.
\begin{theorem}
	The set $\{e^{(k)}\}_{k\in \mathbb{N}_0}$ is a
    complete set of orthogonal projections,
    i.e. $e^{(k)}\circ e^{(\ell)}=\delta^{kl}e^{(k)}$ and
    $\id=\sum_{k=0}^\infty e^{(k)}$.
\end{theorem}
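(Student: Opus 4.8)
The plan is to derive both assertions from the generating-function identity $\id^{\star k}=\sum_{i\ge 0}k^{i}e^{(i)}$ already recorded above, combined with the composition law $\id^{\star k}\circ\id^{\star\ell}=\id^{\star k\ell}$ of Equation~\eqref{Eq:CompConvConc}. Completeness is the easy half: specializing the generating-function identity at $k=1$ gives $\id=\id^{\star 1}=\sum_{i\ge 0}e^{(i)}$, which is just the already-established expansion $\id=\exp_{\star}(e^{(1)})$ rewritten in terms of the $e^{(i)}$.

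For the orthogonality relations $e^{(i)}\circ e^{(j)}=\delta^{ij}e^{(i)}$, I would expand the two sides of $\id^{\star k}\circ\id^{\star\ell}=\id^{\star k\ell}$ in terms of Eulerian idempotents. Writing $\id^{\star k}=\sum_{i}k^{i}e^{(i)}$ and $\id^{\star\ell}=\sum_{j}\ell^{j}e^{(j)}$ and using bilinearity of composition, the left-hand side becomes $\sum_{i,j\ge 0}k^{i}\ell^{j}\,(e^{(i)}\circ e^{(j)})$, whereas the right-hand side is $\sum_{m\ge 0}(k\ell)^{m}e^{(m)}=\sum_{m\ge 0}k^{m}\ell^{m}e^{(m)}$. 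Comparing the coefficient of $k^{i}\ell^{j}$ then yields $e^{(i)}\circ e^{(j)}=\delta^{ij}e^{(i)}$; in particular the diagonal case recovers the idempotency $e^{(k)}\circ e^{(k)}=e^{(k)}$ already proved, and the cases involving the index $0$ encode $(\iota\circ\epsilon)\circ e^{(j)}=e^{(j)}\circ(\iota\circ\epsilon)=0$ for $j\ge 1$.

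The one point that genuinely needs care — and which I expect to be the main (if modest) obstacle — is the legitimacy of this coefficient comparison: a priori the sums are infinite and the identity $\id^{\star k}\circ\id^{\star\ell}=\id^{\star k\ell}$ is only available for nonnegative integers $k,\ell$. Here I would invoke the conilpotency of $B$ to reduce, pointwise, to an identity of polynomials. Concretely, for each $b\in B$ one has $e^{(i)}(b)=0$ for all but finitely many $i$: since $e^{(i)}=\tfrac{1}{i!}(e^{(1)})^{\star i}=\tfrac{1}{i!}\,\mu^{(i)}\circ(e^{(1)})^{\otimes i}\circ\Delta^{(i)}$ (with $\mu^{(i)}\colon B^{\otimes i}\to B$ and $\Delta^{(i)}\colon B\to B^{\otimes i}$ the iterated structure maps) and $e^{(1)}$ annihilates the unit — as each $P^{\star k}$ does, because $P(1)=\id(1)-\iota(\epsilon(1))=0$ — only the fully reduced summand of $\Delta^{(i)}(b)$, namely its component in $(\ker\epsilon)^{\otimes i}$, survives, and this vanishes as soon as $i$ exceeds the coradical-filtration degree of $b$. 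Hence, evaluated at any fixed $b$, both $\sum_{i,j}k^{i}\ell^{j}(e^{(i)}\circ e^{(j)})$ and $\sum_{m}k^{m}\ell^{m}e^{(m)}$ are honest polynomials in $k$ and $\ell$; their agreement for all $k,\ell\in\mathbb{N}$ forces agreement of all coefficients, giving $e^{(i)}\circ e^{(j)}(b)=\delta^{ij}e^{(i)}(b)$, and since $b$ was arbitrary the operator identity follows. This same local finiteness is also what makes $\id=\sum_{k}e^{(k)}$ a well-defined pointwise-finite sum, so no separate convergence issue arises, and the proof is complete.
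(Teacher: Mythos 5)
Your proposal is correct and follows essentially the same route as the paper: the theorem there is deduced precisely from the expansion $\id^{\star k}=\sum_{i\geq 0}k^{i}e^{(i)}$ combined with the composition law $\id^{\star k}\circ\id^{\star\ell}=\id^{\star k\ell}$ of Equation~\eqref{Eq:CompConvConc}, which is exactly your coefficient-comparison argument. You merely make explicit the pointwise local finiteness coming from conilpotency (and the resulting polynomial identity in $k,\ell$), which the paper leaves implicit.
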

\begin{example}
Let $V$ be a vector space and let $\comm (V)=\bigoplus_{k\geq 1}\bigvee^k V$ be the 
free commutative algebra generatated by $V$. It is well-known that it can be endowed 
with a cocommutative bialgebra structure using the shuffle coproduct.  It can be shown that 
he $k$-th Eulerian idempotent 
is now given by the projection $\comm (V)\to \bigvee^k V$.
\end{example}
We denote from now on $B^{(k)}=e^{(k)}(B)$ and we get that
$B=\bigoplus_{k\geq 0} B^{(k)}$.  Note that this means that
$x\in \mathrm{image}( e^{(\ell)})$ if and only if $\id^{\star
  k}(x)=k^\ell x$. Moreover, since the algebra structure of $B$ is
commutative the convolution product of two algebra morphisms is
again an algebra morphism. So we obtain that $\id^{\star k}$ is
an algebra morphism, which implies that
\begin{align}
	\label{Eq: GradProd}
	B^{(\alpha_1)}\cdots B^{(\alpha_k)}\subseteq B^{(\sum_{i=1}^k\alpha_i)}
\end{align}
for all $(\alpha_1,\cdots,\alpha_k)\in \mathbb{N}_0^k$.
In particular we have for all $x_1,\dots, x_k\in B^{(1)}$ that the equation 
$$e^{(k)}(x_1\cdots x_k)=x_1\cdots x_k.$$
holds. Let us now consider the coproduct: for $x\in B^{(1)}$, we have that 
\begin{align*}
	\Delta(x)-x\tensor 1- 1\tensor x \subseteq \bigoplus_{i + j\geq 2} B^{(i)}\tensor B^{(j)},
\end{align*}
because applying $\epsilon\tensor \id$ and
$\id\tensor\epsilon$ on the left side it vanishes, so it has
to vanish on the right side.  This immediately implies that, by
using Equation~\eqref{Eq: GradProd}, for
$x_1,\dots,x_k\in B^{(1)}$ we get
\begin{align}\label{Eq:CoProdGrading}
	\Delta(x_1\cdots x_k)-\prod_{i=1}^k (x_i\tensor 1+1\tensor x_i) \in \bigoplus_{i+j\geq k+1} B^{(i)}\tensor B^{(j)}.
\end{align}
If we introduce now the decreasing filtration
\begin{align}
	\label{Def:filtration}
	B^{(\geq k)}
	:=
	\bigoplus_{i\geq k} B^{(i)},
\end{align}	 
we see that $(B,\Delta)$ is a filtered coalgebra,
i.e. $\Delta(B^{(\geq k)})\subseteq \sum_{i+j=k}B^{(\geq
  i)}\tensor B^{(\geq j)}$, since $B^{(k)}$ is generated as a
vector space by $k$-fold multiplications of elements of
$B^{(1)}$ by the definition of $e^{(k)}$.  Using Equation~\eqref{Eq: GradProd}
and that $\Delta$ is an algebra morphism, we obtain
that
\begin{align*}
	\Delta^{n-1} (x_1 \cdots x_n)
	= 
	\sum_{\sigma\in S_n}\chi(\sigma) x_{\sigma(1)}\tensor\cdots \tensor x_{\sigma(k)} 
	+ \bigoplus_{i_1+\cdots i_n \geq n+1}B^{(i_1)}\tensor \cdots \tensor B^{(i_n)},
\end{align*}
for all $x_1,\dots,x_k\in B^{(1)}$, where $\chi$ is the graded sign coming from exchanging the $x_i$'s.
This in turn means that 
\begin{align}
	\label{Eq:BialId}
	\tfrac{(e^{(1)})^{\tensor k}}{k!}\circ\Delta^{k-1}(x_1\cdots x_k)
	=
	\tfrac{1}{k!}\sum_{\sigma\in S_k}\chi(\sigma) x_{\sigma(1)}\tensor\cdots \tensor x_{\sigma(k)},
\end{align}
for all $x_1,\dots,x_k\in B^{(1)}$. 
\begin{theorem}\label{Thm: PBW?}
	The maps
	\begin{align*}
		I_k\colon \Sym^kB^{(1)}\ni x_1\vee\cdots\vee x_k\mapsto x_1\cdots x_k \in B^{(k)}
	\end{align*}
	are  isomorphisms for all $k\in \mathbb{N}_0$ and their sum is an isomorphism of commutative algebras. 
\end{theorem}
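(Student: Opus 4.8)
\textit{Proof proposal.}
The plan is to produce an explicit two-sided inverse of $I:=\bigoplus_{k\geq 0}I_k$ built from the iterated coproduct, and then read off all the assertions from the two identities \eqref{Eq: GradProd} and \eqref{Eq:BialId} already established above.

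First I would record that $I_k$ actually takes values in $B^{(k)}$ — this is precisely the identity $e^{(k)}(x_1\cdots x_k)=x_1\cdots x_k$ for $x_i\in B^{(1)}$ obtained after \eqref{Eq: GradProd} — and then prove surjectivity of $I_k$. For the latter, unwind $e^{(k)}=\tfrac{1}{k!}(e^{(1)})^{\star k}$: by definition of the convolution product and (co)associativity, $(e^{(1)})^{\star k}$ is the composite of $\Delta^{k-1}\colon B\to B^{\otimes k}$, then $(e^{(1)})^{\otimes k}$, then the iterated multiplication $B^{\otimes k}\to B$ (the expression is finite on each element since $e^{(1)}$ is well-defined on the conilpotent $B$). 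Hence, for every $x\in B$, $e^{(k)}(x)$ is a $\Bbbk$-linear combination of $k$-fold products of elements $e^{(1)}(-)\in B^{(1)}$, so it lies in $\mathrm{image}(I_k)$; thus $B^{(k)}=\mathrm{image}(e^{(k)})\subseteq\mathrm{image}(I_k)$, and with the previous sentence $\mathrm{image}(I_k)=B^{(k)}$.

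Next, for injectivity I would set $\pi_k:=\tfrac{1}{k!}\,q\circ(e^{(1)})^{\otimes k}\circ\Delta^{k-1}\big|_{B^{(k)}}\colon B^{(k)}\to\Sym^kB^{(1)}$, where $q\colon(B^{(1)})^{\otimes k}\to\Sym^kB^{(1)}$ is the canonical projection onto the graded-symmetric coinvariants. Then \eqref{Eq:BialId} says exactly that $\pi_k\circ I_k=\id_{\Sym^kB^{(1)}}$: it computes $(e^{(1)})^{\otimes k}\circ\Delta^{k-1}(x_1\cdots x_k)=\sum_{\sigma\in S_k}\chi(\sigma)\,x_{\sigma(1)}\otimes\cdots\otimes x_{\sigma(k)}$, whose image under $\tfrac{1}{k!}q$ is $\tfrac{1}{k!}\sum_{\sigma}\chi(\sigma)^2\,x_1\vee\cdots\vee x_k=x_1\vee\cdots\vee x_k$. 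Combined with the surjectivity just shown, $I_k$ is a bijection with inverse $\pi_k$; the cases $k=0$ ($B^{(0)}=\Bbbk\cdot\iota(1)$, $I_0\colon\Bbbk\to B^{(0)}$) and $k=1$ ($I_1=\id_{B^{(1)}}$) are immediate. Finally $I=\bigoplus_{k\geq0}I_k\colon\Sym(B^{(1)})=\bigoplus_{k\geq0}\Sym^kB^{(1)}\to\bigoplus_{k\geq0}B^{(k)}=B$ is a bijection by the Eulerian decomposition, and it is multiplicative since on decomposables $I\big((x_1\vee\cdots\vee x_k)\vee(y_1\vee\cdots\vee y_\ell)\big)=x_1\cdots x_k\,y_1\cdots y_\ell=I(x_1\vee\cdots\vee x_k)\,I(y_1\vee\cdots\vee y_\ell)$ by associativity and commutativity of $\mu$ (and $1\mapsto\iota(1)$); hence $I$ is an isomorphism of graded commutative algebras.

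No genuine obstacle arises here: the proof is essentially a repackaging of \eqref{Eq: GradProd} and \eqref{Eq:BialId}. The one point that must be handled with care is the bookkeeping of Koszul signs — making sure the sign $\chi(\sigma)$ produced in \eqref{Eq:BialId} is exactly the one picked up in passing from $(B^{(1)})^{\otimes k}$ to the coinvariants $\Sym^kB^{(1)}$, so that $\chi(\sigma)^2=1$ and $\pi_k\circ I_k$ comes out as the identity rather than a sign-twisted variant.
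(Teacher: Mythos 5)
Your proposal is correct and follows essentially the same route as the paper: your $\pi_k=\tfrac{1}{k!}\,q\circ(e^{(1)})^{\otimes k}\circ\Delta^{k-1}$ is exactly the paper's inverse $J_k(x)=\tfrac{1}{k!}e^{(1)}(x_{(1)})\vee\cdots\vee e^{(1)}(x_{(k)})$ written without Sweedler notation, and both arguments reduce one composition to Equation~\eqref{Eq:BialId} and the other to the identity $e^{(k)}=\tfrac{1}{k!}\mu^{k-1}\circ(e^{(1)})^{\otimes k}\circ\Delta^{k-1}$ (you phrase this as surjectivity of $I_k$, the paper as $I_k\circ J_k=e^{(k)}|_{B^{(k)}}=\id$). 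The multiplicativity of $I$ is handled identically in both.
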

\begin{proof}
	Note that from the discussion above it
    is clear that the domains of the maps $I_k$ are in
    fact $B^{(k)}$.  We give an inverse to all of these
    maps:
	\begin{align*}
		J_k \colon B^{(k)}\ni x \longmapsto \tfrac{1}{k!}e^{(1)}(x_{(1)})\vee \cdots \vee e^{(1)}(x_{(k)})\in \Sym^k(B^{(1)}),
	\end{align*}
	where we use Sweedler's notation. Note that we have
    $I_k\circ J_k =e^{(k)}\at{B^{(k)}}=\id$. The identity
    $J_k\circ I_k=\id$ follows immediately from Equation
    \eqref{Eq:BialId}. Moreover, the map $I=\sum_k
    I_k\colon \Sym(B^{(1)})\to B$ is by definition an
    algebra map.
\end{proof}
\begin{remark}
	Note that Theorem \ref{Thm: PBW?} together with
    Equation \eqref{Eq:CoProdGrading} implies that the
    graded coalgebra associated to the filtration from
    \eqref{Def:filtration} is isomorphic to $\Sym(B^{(1)})$
    with the shuffle coproduct.
\end{remark}
Let us denote by $\Delta_L$ the graded cocommutator, i.e. 
\begin{align*}
	\Delta_L:=(\id-\tau)\circ \Delta,
\end{align*}
where $\tau\colon B\tensor B\to B\tensor B$ is the graded
    tensor flip. This means, using Equation
    \eqref{Eq:CoProdGrading}, that
\begin{align*}
	\Delta_L(B^{(\geq 1)})\subseteq B^{(\geq 1)}\tensor B^{(\geq 1)}.
\end{align*}
This means in particular that $B^{(\geq 1)}$ is a Lie
    sub-coalgebra of $B$. Moreover, one can check along the same
    lines that $B^{(\geq 2)}$ is a Lie coideal in $B^{(\geq 1)}$,
    so $B^{(1)}\cong B^{(\geq 1)}/B^{(\geq 2)}$ is a graded Lie
    coalgebra.  Note that we could define the associated Lie
    coalgebra also without using the splitting induced by the
    Eulerian idempotents, since by Theorem 1.3 $B^{(\geq 2)}$ is
    given by the image of the multiplication restricted to
    $B^{(\geq 1)}$, i.e.
\begin{align*}
	B^{(\geq 2)}=B^{(\geq 1)}\cdot B^{(\geq 1)}
\end{align*}
and $B^{(\geq 1)}=\ker \epsilon$. 	
Let us now consider the tensor coalgebra $T^c(V)$ of a graded
    vector space $V$. The coproduct is the deconcatenation
    coproduct $\Delta$:
\begin{align*}
	\Delta(v_1\cdots v_k)=\sum_{i=0}^{k} v_1\cdots v_i\tensor v_{i+1}\cdots v_k.
\end{align*}
Moreover, the graded commutative product 
\begin{align*}
	v_1\cdots v_l\tensor v_{l+1}\cdots v_k\mapsto \sum_{\sigma \in \mathrm{Sh}(l,k-l)}\chi(\sigma)v_{\sigma^{-1}(1)}
	\cdots v_{\sigma^{-1}(k)}.
\end{align*}
together with the canonical maps $\iota\colon \Bbbk\to T^c(V)$
    and $\epsilon\colon T^c(V)\to \Bbbk$, turns $T^c(V)$ into a
    graded commutative conilpotent bialgebra. Let us denote
    $\mathrm{coLie}(V):=e^{(1)}(T^c(V))$. Moreover, we have the
    canonical map
\begin{align*}
	\colie(V)\to V
\end{align*}
which is induced by the canonical map $\cc{T}^c(V)\to V$. It
    is known that $\colie(V)$ is the cofree conilpotent
    graded Lie coalgebra cogenerated by $V$, see \cite{dolgushevetal:2007}.
    This means in particular, using Theorem \ref{Thm: PBW?}, that
    the map
\begin{equation}\label{Eq:PBW}
	I\colon \Sym(\colie(V))\ni a_1\vee\cdots\vee a_k\to 
	a_1\cdots  a_k\in T^c(V)
\end{equation}
is an isomorphism of graded algebras and an isomorphism on the
    level of graded objects with respect to the corresponding
    filtration. Note that since $T^c(V)$ is the universal enveloping 
    conilpotent coalgebra of $\colie(V)$, this isomorphism is 
     nothing  else but the dual version of the Poincaré-Birkoff-Witt 
     Theorem for the cofree Lie coalgebra.

\subsection{Chevalley-Eilenberg cohomology of cofree Lie coalgebras}

Let $V$ be a graded vector space and let $\colie(V)$ be its
    cofree conilpotent coalgebra. We consider the graded vector
    space
\begin{align}
	T^c(V)\tensor \Sym(\colie(V)[-1])
\end{align}
together with the differential
\begin{align}
	\D (C\tensor \xi)
	=(-1)^{|C_{(1)}|}C_{(1)}\tensor\epsilon(C_{(2)})\wedge \xi
	+(-1)^{|C|}C\tensor \delta\xi
\end{align}
where $\epsilon$ is the canonical projection map $T^c(V)\to \colie(V)$ and the
comultiplication is extended as a degree $+1$ derivation of
the product in $\Sym (\colie(V)[-1)]$.  As a first step, we use
the isomorphism $I$ from Equation \ref{Eq:PBW} to identify 
\begin{align}
	T^c(V)\tensor \Sym^\bullet (\colie(V)[-1])\cong 
	\Sym(\colie(V))\tensor \Sym(\colie(V)[-1])
\end{align}
and introduce the differential
\begin{align}
	\D_\mathrm{ab}(\xi_1\vee\dots\vee\xi_k\tensor \alpha)
	=
	\sum_{i=1}^k (-1)^{\sum_{j\neq i}|\xi_j| + |\xi_i|\sum_{l=i+1}^k |\xi_l|}\xi_1\vee\overset{\wedge_i}
	{\cdots}\vee\xi_k\tensor\xi_i\wedge \alpha,
\end{align}
where $\wedge_i$ denotes the omission of the $\xi_i$. Note
that this is the usual (graded) de Rham differential, which
has a canonical homotopy $h_\mathrm{ab}\colon
\Sym^c\colie(V)\tensor \Sym^\bullet\colie(V)[-1]
[\to\Sym^c\colie(V) \tensor \Sym^{\bullet-1} \colie(V)[-1]$
given by
\begin{align*}
	h_\mathrm{ab}(\xi_1\vee \dots \vee \xi_l\tensor \alpha_1\wedge\dots \wedge\alpha_k)
	=
	\frac{(-1)^{\sum_{i=1}^\ell|\xi_i|}}{\ell+k}\sum _{i=1}^k( -1)^{|\alpha_i|\sum_{j=1}^{i-1}|\alpha_j|}
	\xi_1\vee\dots\vee\xi_\ell\vee \alpha_i\tensor\alpha_1\wedge\overset{\wedge_i}{\dots} \wedge \alpha_k.
\end{align*}
Note that $h_\mathrm{ab}$ is just the graded version of the canonical homotopy constructed in the 
Poincaré-Lemma for polynomial differential forms. 
A short computation shows that the following equality holds:
\begin{align}
	\D_\mathrm{ab}h_\mathrm{ab}+h_\mathrm{ab}\D_\mathrm{ab}
	=
	\id-\sigma,
\end{align}
where $\sigma$ denotes the projection to symmetric and anti-
symmetric degree $0$. The idea is now, that using the
Poincaré-Birkoff-Witt isomorphism, we can see $\D$ as a
perturbation of $\D_\mathrm{ab}$, so for $b:=\D-\D_\mathrm{ab}$,
we get
\begin{align}
	b\colon \Sym(\colie(V))\tensor
	\Sym^\bullet(\colie(V)[-1])\to  
	\Sym(\colie(V))\tensor
	\Sym^{\bullet+1}(\colie(V)[-1]].
\end{align}

We want to apply Prop.~\ref{prop:HomologicalPerturbation} to get a
    deformation retract for the differential
    $\D=\D_\mathrm{ab}+b$.  First, observe that the map
\begin{align}
	bh_{\mathrm{ab}}\colon \Sym(\colie(V))\tensor
	\Sym^\bullet(\colie(V)[-1])
	\to \Sym(\colie(V))\tensor
	\Sym^\bullet(\colie(V)[-1]),
\end{align}
is locally nilpotent. In facts, the map $bh_{\mathrm{ab}}$
    raises the symmetric degree in the first tensor factor by at
    least one which can be seen by Equation
    \eqref{Eq:CoProdGrading}. Moreover, the total tensor degree is
    not changed, thus $bh_{\mathrm{ab}}$ is locally
    nilpotent. As an immediate consequence we have:
\begin{theorem}\label{Thm:Liecoalgcohomology}
	Let $V$ be a graded vector space, 
	then the map
	\begin{align}
		h= h_{\mathrm{ab}}
		\sum_{i=0}^\infty (-bh_{\mathrm{ab}})^{i}\colon  
		\Sym(\colie(V))\tensor
		\Sym^\bullet(\colie(V)[-1])
		\subseteq \Sym(\colie(V))\tensor
		\Sym^{\bullet+1}(\colie(V)[-1])
	\end{align} 
	is a well-defined map of degree $-1$ and a canonical homotopy for the 
	differential $\D$, 
	i.e. $\D h+h\D=\id-\sigma$. Moreover, 
	we have $h^2=0$ and $h(1)=0$. 
\end{theorem}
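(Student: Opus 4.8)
The plan is to recognise $h$ as the homotopy produced by the homological perturbation lemma, applied to the de~Rham-type special deformation retract carried by $\D_\mathrm{ab}$ and $h_\mathrm{ab}$ and perturbed by $b=\D-\D_\mathrm{ab}$. First I would record the unperturbed retract: write $\sigma=i\circ p$, where $p$ is the projection onto the line $\Bbbk\cdot(1\tensor 1)$ of bidegree $(0,0)$ and $i\colon (\Bbbk,0)\hookrightarrow \Sym(\colie(V))\tensor\Sym^\bullet(\colie(V)[-1])$ is its inclusion. Since $\D_\mathrm{ab}$ strictly raises the exterior degree of the second tensor factor and lowers the symmetric degree of the first, both $i$ and $p$ are chain maps onto $(\Bbbk,0)$, and together with the already established identity $\D_\mathrm{ab}h_\mathrm{ab}+h_\mathrm{ab}\D_\mathrm{ab}=\id-\sigma$ this makes $(i,p,h_\mathrm{ab})$ a homotopy retract. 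The three side conditions $h_\mathrm{ab}^2=0$, $p\circ h_\mathrm{ab}=0$ and $h_\mathrm{ab}\circ i=0$ are immediate from the explicit Poincaré-lemma formula for $h_\mathrm{ab}$ (the last because the defining sum over the $\alpha_i$'s is empty on $1\tensor 1$), so this is a \emph{special} deformation retract.

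Next I would check that $b=\D-\D_\mathrm{ab}$ is a locally nilpotent perturbation. That $(\D_\mathrm{ab}+b)^2=\D^2=0$ is just the fact that $\D$ is a differential. For local nilpotency relative to $h_\mathrm{ab}$ I would use the bookkeeping already noted before the statement: transported through the Poincaré--Birkhoff--Witt isomorphism \eqref{Eq:PBW}, the operator $b$ is governed by the estimate \eqref{Eq:CoProdGrading}, so that $b\,h_\mathrm{ab}$ strictly raises the symmetric degree of the first tensor factor while preserving the total tensor degree; hence on any element, which lies in some bounded total tensor degree, $(b\,h_\mathrm{ab})^n$ vanishes for $n$ large. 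In particular $\id+b\,h_\mathrm{ab}$ is invertible with inverse $\sum_{n\geq 0}(-b\,h_\mathrm{ab})^n$, and since $b\,h_\mathrm{ab}$ has degree $0$ the series $h=h_\mathrm{ab}\sum_{n\geq 0}(-b\,h_\mathrm{ab})^n$ is a well-defined map of degree $-1$.

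Now the homological perturbation lemma in its special-deformation-retract form (Corollary~\ref{cor:SpecialDeformationRetract}, via Proposition~\ref{prop:HomologicalPerturbation}) produces a new special deformation retract, whose pieces I would identify explicitly. Because $h_\mathrm{ab}(1\tensor 1)=0$ and $\D(1\tensor 1)=\D_\mathrm{ab}(1\tensor 1)=0$, hence $b(1\tensor 1)=0$, the perturbed inclusion is still $I=i$ and the perturbed differential on $\Bbbk$ is still $0$. The perturbed homotopy is $H=(\id+h_\mathrm{ab}b)^{-1}h_\mathrm{ab}=\sum_{n\geq 0}(-h_\mathrm{ab}b)^n h_\mathrm{ab}=h_\mathrm{ab}\sum_{n\geq 0}(-b\,h_\mathrm{ab})^n=h$, i.e. exactly the map in the statement. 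Finally, since $b\,h_\mathrm{ab}$ lands in the subspace of positive symmetric degree in the first factor, which $p$ annihilates, one gets $p\circ(\id+b\,h_\mathrm{ab})^{-1}=p$, whence the perturbed projection composed with $I$ gives $IP=i\circ p=\sigma$. The homotopy-retract identity of the perturbed diagram then reads $\D h+h\D=\id-\sigma$, and the remaining assertions $h^2=0$ and $h(1\tensor 1)=0$ are precisely the surviving side conditions $H^2=0$ and $H\circ I=0$ of the perturbed special deformation retract.

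I expect the only genuinely load-bearing point to be the two-grading bookkeeping, namely that $b\,h_\mathrm{ab}$ strictly raises the first-factor symmetric degree while preserving the total tensor degree; once this is in place, the local nilpotency (hence the well-definedness of $h$) and the collapse $IP=\sigma$ are both formal, and the verification of the special-deformation-retract side conditions for $(\D_\mathrm{ab},h_\mathrm{ab})$ is routine graded Poincaré-lemma bookkeeping.
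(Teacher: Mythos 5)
Your proposal is correct and follows essentially the same route as the paper: the statement is obtained by applying the homological perturbation lemma (in its special-deformation-retract form) to the retract carried by $\D_\mathrm{ab}$ and $h_\mathrm{ab}$, with the perturbation $b=\D-\D_\mathrm{ab}$ shown to be locally nilpotent because $bh_\mathrm{ab}$ raises the symmetric degree in the first tensor factor while preserving the total tensor degree. Your additional explicit identification of the perturbed inclusion, projection and homotopy only spells out what the paper leaves as ``an immediate consequence''.
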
    
With this we want to compute the Chevalley--Eilenberg cohomology
without coefficients of the cofree conilpotent Lie coalgebra
$\colie(V)$ cogenerated by a graded vector space $V$. Let us define the complex
\begin{align}
	(\Bbbk \oplus V[-1])\tensor T^cV\cong T^cV\oplus V[-1]\tensor T^cV
\end{align}
with the differential 
$\partial\colon T^cV\oplus V[-1]\tensor T^cV\to T^cV\oplus 
V[-1]\tensor T^cV$ defined by 
\begin{align*}
	\partial (a_1\tensor\dots\tensor a_k,b)=
	(0,a_1\tensor(a_2\tensor\dots\tensor a_k)). 
\end{align*}
Note that there is a canonical homotopy $k\colon T^cV\oplus
    V[-1]\tensor T^cV\to T^cV\oplus V[-1]\tensor T^cV$ given by
\begin{align}
	k(a, b_0\tensor(b_1\tensor\dots\tensor b_k))=
	(b_0\tensor\dots\tensor b_k,0),
\end{align}
which fulfills $(\partial k + k\partial) (a,b)=(\pr_\Bbbk(a),0)$.
The idea is now that the
complex
\begin{align}
	(\Bbbk\oplus V[-1])\tensor T^cV \tensor \Sym(\colie(V)[-1])
\end{align}
has two differentials: $\partial$ on the first two tensor factors and
$\D$ on the last two tensor factors (with Koszul signs).  It is easy
to check that they commute and therefore $D:=\partial+\D$ is a
differential as well. This means we get two deformation retracts:
\begin{center}
	\begin{tikzcd}
		\Bbbk\oplus V[-1] \arrow[rr, shift left, "i"]  &  &
		\bigg((\Bbbk\oplus V[-1])\tensor T^cV \tensor \Sym(\colie(V)[-1]) , \D\bigg)\arrow[ll, shift left, "p"] 
		\arrow[loop right, 
		distance=4em, start anchor={[yshift=1ex]east}, end anchor={[yshift=-1ex]east}]{}{h}        \end{tikzcd}
\end{center}
and 
\begin{center}
	\begin{tikzcd}
		\Sym (\colie(V)[-1] ) \arrow[rr, shift left, "j"]&  &
		\bigg((\Bbbk\oplus V[-1])\tensor T^cV \tensor \Sym(\colie(V)[-1]) , \partial\bigg)\arrow[ll, shift left, "q"] \arrow[loop right, 
		distance=4em, start anchor={[yshift=1ex]east}, end anchor={[yshift=-1ex]east}]{}{k}  
	\end{tikzcd}
\end{center}
Now we use Prop.~\ref{prop:HomologicalPerturbation} to perturb the first
    deformation retract by $\partial$ and the second one by
    $\D$. One can check, that in this way one obtains a
    deformation retract
\begin{center}
	\begin{tikzcd}
		\label{DefRetract:cofreecolie}
		V[-1]\oplus \Bbbk \arrow[rr, shift left, "i"]  &  &
		(\Sym(\colie(V)[-1]) , \delta)\arrow[ll, 
		shift left, "p"]\arrow[loop right, 
		distance=4em, start anchor={[yshift=1ex]east}, end anchor={[yshift=-1ex]east}]{}{H}  
	\end{tikzcd}
\end{center}
where $i$ and $p$ are the canonical maps $i(v)=(v)$ and
$p(v_1|\dots|v_k)=v_1$ for $k=1$ and vanishes otherwise.  Note
that the homotopy $H$ is by construction just a combinatorial
rearranging of the corresponding tensor factors in $V$ and
their quotients and symmetrization while not changing the
total amount of tensor factors, therefore this construction is
natural in the vector space $V$.

This implies immediately $H\circ i=0$ and $p\circ H=0$ by
counting tensor degrees. Even though we cannot ensure $H^2=0$,
we just replace it by $H\circ \delta\circ H$ to achieve that
the homotopy squares to zero. And we obtain a special
deformation retract, which restricts to
\begin{center}
	\begin{tikzcd}
		\label{DefRetract:cofreecolie}
		V[-1] \arrow[rr, shift left, "i"]  &  &
		(\comm(\colie(V)[-1]) , \delta)\arrow[ll, 
		shift left, "p"] \arrow[loop right, 
		distance=4em, start anchor={[yshift=1ex]east}, end anchor={[yshift=-1ex]east}]{}{H}  
	\end{tikzcd}
\end{center} 
Let us now assume that $V$ is endowed with a differential 
$\D_V$, which we can extend to $\colie(V)$ by 
\begin{align*}
	\D_V(v_1|\dots|v_k)=
	\sum_{i} \pm (v_1|\dots|\D_V v_i|\dots|v_k)
\end{align*}
to get a differential graded Lie coalgebra.  We can extend it
further to a derivation of the symmetric product. Since
$H^2=0$ and because of the naturality of $H$, we have
$H\D_V+\D_V H=0$, and we obtain 

\begin{proposition}
\label{prop:a6}
Let $(V,\D_V)$ be a cochain complex, then 
\begin{equation}
	\begin{tikzcd}
		\label{DefRetract:cofreecolie}             (V, \D_V) \arrow[rr, shift left, "i"]  &  &             (\comm(\colie(V[1])[-1]) , \delta+\D_V)\arrow[ll,              shift left, "p"] \arrow[loop right,  	distance=4em, start anchor={[yshift=1ex]east}, end anchor={[yshift=-1ex]east}]{}{H} \end{tikzcd} 
\end{equation}
is a special deformation retract. Moreover, this construction is natural in the sense that if $f\colon (V,\D_V)\to (W,\D_W)$ is a cochain map, then the induced map 
$F\colon \comm(\colie(V[1])\to \comm(\colie(W[1])$ induces a morphism of special deformation retracts. 
\end{proposition}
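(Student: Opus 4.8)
The plan is to obtain the claimed retract by a single application of the homological perturbation lemma to the special deformation retract for $(\comm(\colie(V[1])[-1]),\delta)$ constructed immediately above the statement. Applying that construction to the graded vector space $V[1]$ (so that $V[1][-1]=V$) yields maps $i\colon V\to \comm(\colie(V[1])[-1])$ and $p$ in the opposite direction together with a homotopy $H$, where $i(v)=(v)$, $p$ is the projection onto $\colie^1(V[1])[-1]=V$, $H$ is natural in $V[1]$, and $H^2=0$, $pH=0$, $Hi=0$.

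First I would extend $\D_V$ to the unique degree $+1$ coderivation of $\colie(V[1])$ corestricting to $\D_V$ on the cogenerators $V[1]$, and then to a derivation of the free graded commutative product on $\comm(\colie(V[1])[-1])$, still denoted $\D_V$. Then $\D_V^2=0$, and since $\D_V$ is a coderivation of the cofree Lie coalgebra one gets $\delta\D_V+\D_V\delta=0$; hence $(\delta+\D_V)^2=0$, so that $b:=\D_V$ is a perturbation of $(i,p,H)$ with perturbed differential $\delta+\D_V$. Because $H$ is built by combinatorially permuting and contracting tensor slots without touching their internal structure, it anticommutes with $\D_V$, and together with $H^2=0$ this yields
\[
(bH)^2=(\D_V H)^2=\D_V(H\D_V)H=-\,\D_V^2H^2=0,
\]
so $b$ is locally nilpotent and Corollary~\ref{cor:SpecialDeformationRetract} applies: the perturbed retract is again a special deformation retract.

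Next I would check that the perturbed data coincide with the maps in the statement. Since $(bH)^2=0=(Hb)^2$ we have $(\id+bH)^{-1}=\id-\D_V H$ and $(\id+Hb)^{-1}=\id-H\D_V$; using $\D_V i=i\D_V$ (as $i$ is a cochain map), $p\D_V=\D_V p$ (as $\D_V$ preserves tensor degrees) and the side conditions $pi=\id$, $Hi=0$, $pH=0$, $H^2=0$, one computes that the perturbed inclusion equals $i$, the perturbed projection equals $p$, the perturbed homotopy equals $H$, and the induced differential on $V$ equals $p(\id-\D_V H)\D_V i=\D_V$. This is exactly the special deformation retract in the statement. Finally, for a cochain map $f\colon(V,\D_V)\to(W,\D_W)$, naturality of the unperturbed construction provides an induced morphism $F$ intertwining $i$, $p$, $H$ and $\delta$, and $F$ intertwines the perturbations as well because $F\D_V=\D_W F$ (as $F$ acts slot-wise and $f$ is a chain map); Corollary~\ref{cor:MorphismOfPerturbations} then shows $F$ is a morphism of the perturbed retracts.

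The one genuinely delicate point is the anticommutation $\D_V H+H\D_V=0$: one must make precise that the homotopy $H$ produced by the iterated perturbation-lemma construction preceding the proposition is natural in the strong sense that it commutes, up to the Koszul sign $(-1)^{|\phi|}$, with the slot-wise extension of every graded endomorphism $\phi$ of $V[1]$. This holds because each ingredient of that construction (in particular $h_{\mathrm{ab}}$ from Theorem~\ref{Thm:Liecoalgcohomology} and the perturbation terms there) only rearranges, inserts or contracts tensor factors without touching their internal structure, so it can be verified by inspection of the explicit formulas or by a short induction on tensor degree; the rest is routine bookkeeping with the homological perturbation lemma.
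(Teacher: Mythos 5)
Your proof is correct and follows essentially the same route as the paper: the paper likewise extends $\D_V$ slot-wise to a derivation of $\comm(\colie(V[1])[-1])$, invokes $H^2=0$ and the naturality of $H$ to get $H\D_V+\D_V H=0$, and then applies the homological perturbation lemma, noting that $i$ and $p$ (and $H$) are unchanged. You merely spell out the smallness of the perturbation and the verification that the perturbed data coincide with the original ones in more detail than the paper does.
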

\begin{proof}
This is just an application of Prop.~\ref{prop:HomologicalPerturbation} where it is easy to check that the maps $i$ and $p$ do not change while
perturbing..
\end{proof}
Assuming now that we start with a dg commutative algebra $(A,
\D_A)$, we can define the map $d\colon \colie(A[1])\to
\colie(A[1])$ by
\begin{align*}
	d(a_1|\dots|a_k)
	=
	\sum_{i=1}^{k-1}(-1)^i
	(a_{1}|\dots| a_ia_{i+1}|\dots|a_{k}).
\end{align*}
Note that $D_A=\D_A+d$ define a differential on $
\colie(A[1])$,being a coderivation with respect to the cofree
cobracket. This means one can extend it to a differential to
$\Sym(\colie(A[1])[-1]) $ to obtain a dg commutative algebra
with respect to the differential $\delta+D_A$. Note that
$\cc{\Sym}(\colie(A[1])[-1])$ is a dg commutative subalgebra
of $\Sym(\colie(A[1])[-1])$.  Using again Prop.~\ref{prop:HomologicalPerturbation}, 
one obtains 
\begin{proposition}
Let $A$ be a commutative dg algebra then  
\begin{equation}
	\begin{tikzcd}
		\label{DefRetract:UCommAlg}
		(A\oplus\Bbbk,\D_A) \arrow[rr, shift left, "i"]  &  &
		(\Sym(\colie(A[1])[-1]) , \delta+D_A)\arrow[ll, 
		shift left, "P"] \arrow[loop right, 
		distance=4em, start anchor={[yshift=1ex]east}, end anchor={[yshift=-1ex]east}]{}{H_A}  
	\end{tikzcd}
\end{equation}
and
\begin{equation}
	\begin{tikzcd}\
		\label{DefRetract:CommAlg}
		(A,\D_A) \arrow[rr, shift left, "i"]  &  &
		(\comm(\colie(A[1])[-1]) , \delta+D_A)\arrow[ll, 
		shift left, "P"]\arrow[loop right, 
		distance=4em, start anchor={[yshift=1ex]east}, end anchor={[yshift=-1ex]east}]{}{H_A}  
	\end{tikzcd}
\end{equation}
are special deformation retracts where
$P$ is now given by 
\begin{align*}
	P((a_1)\vee\cdots\vee(a_k))=a_1\cdots a_k.
\end{align*}
Moreover, this construction is natural in the sense that a morphism of dg commutative algebras induces a morphism of the corresponding special deformation retracts. 
\end{proposition}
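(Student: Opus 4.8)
The plan is to obtain both special deformation retracts by a single application of the homological perturbation lemma, taking as input the special deformation retract of Proposition~\ref{prop:a6}. Apply that proposition to the cochain complex $(A,\D_A)$: it produces a special deformation retract $(i,p,H)$ between $(A,\D_A)$ and $(\comm(\colie(A[1])[-1]),\delta+\D_A)$, in which $i$ is the canonical inclusion whose image is $\comm^1(\colie^1(A[1])[-1])\cong A$. By its explicit construction in the appendix, the homotopy $H$ only recombines the tensor letters of an element of $\comm(\colie(A[1])[-1])$ and hence preserves the total number of tensor letters; the extensions of $\delta$ and of $\D_A$ share this property.

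Now introduce the remaining piece of the differential. Let $b$ denote the extension of the map $d$ -- built from the multiplication of $A$ -- to a derivation of the symmetric product on $\comm(\colie(A[1])[-1])$. As recalled just before the statement, $\delta+D_A=(\delta+\D_A)+b$ is again a differential, so $b$ is a perturbation of $(i,p,H)$; since $d$ strictly lowers the total number of tensor letters by one while $H$ preserves it, $bH$ is locally nilpotent, i.e. $b$ is a locally nilpotent perturbation. Corollary~\ref{cor:SpecialDeformationRetract} then gives a special deformation retract between $(A,\D_A)$ and $(\comm(\colie(A[1])[-1]),\delta+D_A)$, with perturbed data $I=(\id+Hb)^{-1}i$, $P=p(\id+bH)^{-1}$ and $H_A=(\id+Hb)^{-1}H$. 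Because $b$ annihilates the image of $i$ (elements there have tensor weight one), $bi=0$, so $I=i$ and the perturbed differential $\D_A+p(\id+bH)^{-1}bi$ on the small side is just $\D_A$; thus the left-hand complex is exactly $(A,\D_A)$.

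It remains to evaluate the perturbed projection $P=p(\id+bH)^{-1}=\sum_{n\ge 0}(-1)^np(bH)^n$ on the distinguished elements. Since $p$ is supported in tensor weight one and each application of $bH$ lowers the total tensor weight by one, the only term contributing to $P\bigl((a_1)\vee\cdots\vee(a_k)\bigr)$ is the one with $n=k-1$; an explicit, if somewhat tedious, evaluation of $p(bH)^{k-1}$ -- tracking which tensor letters the operator $d$ multiplies together and the accompanying signs and combinatorial coefficients coming from the Poincaré-type homotopy $H$ -- yields $P\bigl((a_1)\vee\cdots\vee(a_k)\bigr)=a_1\cdots a_k$, as claimed. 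The unital statement follows in exactly the same way, with $\Sym$ in place of $\comm$ and $A\oplus\Bbbk$ in place of $A$ throughout (starting from the unital retract displayed just before Proposition~\ref{prop:a6}), using that $d$ annihilates the $\Bbbk$-summand.

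Finally, for naturality, a morphism $\varphi\colon A\to A'$ of dg commutative algebras induces $\comm(\colie(\varphi[1])[-1])$, which commutes with $\delta$ (functoriality of the cofree cobracket), with the extension of the differential (as $\varphi$ is a cochain map) and with $b$ (as $\varphi$ is multiplicative). The construction of Proposition~\ref{prop:a6} being natural, the induced map is a morphism of the unperturbed retracts and intertwines the two perturbations, so by Corollary~\ref{cor:MorphismOfPerturbations} it is a morphism of the perturbed special deformation retracts as well. The main obstacle is the computation of the third paragraph: one must unwind the homotopy $H$ -- itself assembled from two successive perturbations in the appendix -- through the Neumann series for $P$ and check that all signs and combinatorial coefficients telescope to the iterated product; everything else is a routine bookkeeping with the perturbation lemma and its corollaries.
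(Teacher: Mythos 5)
Your proof is correct and follows essentially the same route as the paper: both obtain the two retracts by perturbing the special deformation retract of Proposition~\ref{prop:a6} (resp.\ its unital analogue) by the derivation extension of $d$, using that $d$ lowers the total tensor degree while the homotopy preserves it to get local nilpotency, and then identify $P$ on $(a_1)\vee\cdots\vee(a_k)$ by a tensor-degree count in the Neumann series. The paper carries out the ``tedious evaluation'' you defer by exhibiting the closed formula \eqref{Eq: Homotopy} for $H\bigl((a_1)\vee\cdots\vee(a_k)\bigr)$, but otherwise the arguments coincide, including the naturality via Corollary~\ref{cor:MorphismOfPerturbations}.
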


\begin{proof}
The deformation retract  is again an easy application of Proposition ~\ref{prop:HomologicalPerturbation}
Note that $P$ is given by $P((a_1)\vee\cdots\vee(a_k))=a_1\cdots a_k$ 
is an easy observation counting tensor
degrees, since one can check fairly easily that
\begin{equation}
	\label{Eq: Homotopy}
	H((a_1)\vee\cdots\vee (a_k))=\frac{1}{k(k-1)}\sum_{i=1}^k \sum_{j=1,j\neq i}^k 
	\pm (a_i|a_j)\vee (a_1)\vee \overset{ij}{\dots}\vee (a_k).  
\end{equation}
\end{proof}
This means in particular that $P$ is a quasi-isomorphism of dg
    commutative algebras, where the dg commutative algebra
    structure of the first retract is given by $\D_A(1)=0$ and
    $1\cdot a=a$. For the purposes of this paper we need to 
     include an $A$-module $\mathscr{M}$.
    Let us stick to the case of a commutative algebra
    concentrated in degree $0$: we consider the dg Lie coalgebra
\begin{align*}
	\colie(A[1])\oplus T^cA[1]\tensor \mathscr{M}
\end{align*}
where the differential is given as a direct sum of $\D \colon \colie(A[1])\to \colie(A[1])$ and the differential $\D_\mathscr{M}\colon 
T^cA[1]\tensor \mathscr{M}\to T^cA[1]\tensor \mathscr{M}$ given by 
\begin{align}
	\D_\mathscr{M}(a_1\tensor\cdots\tensor a_k\tensor m)=
	\D(a_1\tensor \cdots \tensor a_k)\tensor m +(-1)^{k+1}
	a_1\tensor\cdots \tensor a_{k-1}\tensor a_k m. 
\end{align}
Note that by construction it is clear that the sum of the
differentials turns the $\colie(A[1])\oplus T^cA[1]\tensor
\mathscr{M}$ into a dg Lie coalgebra. Let us consider its (reduced)
Chevalley--Eilenberg complex
\begin{align*}
	\comm(\colie(A[1])\oplus T^cA[1]\tensor \mathscr{M}[-1])\cong 
	\comm(\colie(A[1]) \oplus \Sym(\colie(A[1])\tensor 
	\comm(T^cA[1]\tensor \mathscr M[-1]).
\end{align*}
Here the splitting is a splitting of cochain complexes. 
\begin{proposition}
	\label{prop:a8}
Let $A$ be a unital commutative algebra and let $\mathscr{M}$ be a projective $A$-module. 
For each projective basis $\{e_i\}_{i\in I}$ with dual $\{e^i\}_{i\in I}$ we can construct 
a deformation retract 
\begin{equation} \label{DefRetract: projMod}
	\begin{tikzcd}
		\Sym_{A}\mathscr{M}[-1] 
		\arrow[rr, shift left, "I"]  &  &
		(\comm(\colie(A[1])[-1]\oplus T^cA[1]\tensor \mathscr{M}[-1]), \delta+D_A+D_\mathscr{M})\arrow[ll, 
		shift left, "\Pi"] 
		\arrow[out=240,in=300,loop,swap].
	\end{tikzcd}
\end{equation}
with 
	\begin{align*}
	\Pi(x_1\vee \dots x_k)= p(x_1)\wedge \cdots \wedge p(x_k),
	\end{align*}
for $x_i\in \colie(A[1])[-1]\oplus T^cA[1]\tensor \mathscr{M}[-1]$, where $p((a))=a$ for 
$(a)\in \colie^1(A[1])$ and $p(1\tensor m)=m$ and vanishes identically for all other elements. And $I$ is given by 
	\begin{align*}
	I(m_1\wedge \cdots \wedge m_k)=(e^{i_k}(m_1)\cdots e^{i_k}(m_k))\vee(e_{i_1})\vee\cdots    
	 \vee(e_{i_k}).
	\end{align*}
Moreover, the construction is natural in the sense that if $f\colon B\to A $ is an algebra morphism and 
we consider the $B$-module $\mathscr{M}_B=B\tensor_A \mathscr{M}$ then everything extends to a morphism of deformation 
retracts in the suitable sense. 
\end{proposition}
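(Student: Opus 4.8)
The plan is to build the special deformation retract \eqref{DefRetract: projMod} by iterated homological perturbation (Proposition~\ref{prop:HomologicalPerturbation}), starting from the retracts already established in this appendix, first for a \emph{free} module and then transferring the result to the projective case by means of the projective basis.

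\emph{Reduction to the free case.} Write $W$ for the free $\Bbbk$--vector space on the index set $I$. A projective basis $\{e_i\}_{i\in I}$, $\{e^i\}_{i\in I}$ gives the $A$--linear split monomorphism $\iota\colon\mathscr{M}\to A\tensor W$, $m\mapsto\sum_i e^i(m)\tensor e_i$, with retraction $\rho\colon A\tensor W\to\mathscr{M}$, $a\tensor e_i\mapsto a e_i$, and $\rho\circ\iota=\id_{\mathscr{M}}$. Being module maps, $\iota$ and $\rho$ induce a split monomorphism of dg Lie coalgebras $\colie(A[1])\oplus T^cA[1]\tensor\mathscr{M}\hookrightarrow\colie(A[1])\oplus T^cA[1]\tensor(A\tensor W)$ which is the identity on the cofree--Lie--coalgebra summand and is compatible with $\delta$, $D_A$ and $D_{\mathscr{M}}$. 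Since every retract used below is constructed naturally in the pair $(A,\mathscr{M})$, it suffices to prove the proposition for the free module $A\tensor W$ and then take the image of the idempotent induced by $\iota\circ\rho$; chasing $\iota$ and $\rho$ through the free--case formulas reproduces the stated maps $I$ (with the entries $e^{i_j}(m_j)$ and the generators $(e_{i_j})$) and $\Pi$, and yields a homotopy which one finally replaces by $H\,\delta\,H$ to achieve $H^2=0$ as in the earlier constructions; naturality in an algebra morphism $f\colon B\to A$ with $\mathscr{M}_B=B\tensor_A\mathscr{M}$ is then automatic.

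\emph{The free case.} For $\mathscr{M}=A\tensor W$ the space $W$ passes through all constructions untouched, so one may take $\mathscr{M}=A$. One starts from the special deformation retract of $(\comm(\colie(A[1])[-1]),\delta+D_A)$ onto $A$ coming from the dg--commutative refinement of Proposition~\ref{prop:a6}, together with a contracting homotopy for the one--sided bar differential $D_{\mathscr{M}}$ on the comodule summand $T^cA[1]\tensor\mathscr{M}$ (the module analogue of the homotopy $k$ written just above; for $\mathscr{M}=A$ it is the elementary ``prepend $1\tensor\argument$'' map). Using the coalgebra splitting displayed before the proposition and the multiplicativity of $\comm$, these combine into a deformation retract in which only the Lie cobracket $\delta$ remains to be incorporated; by the coproduct--grading estimate \eqref{Eq:CoProdGrading} the cobracket, together with the bar differentials, is strictly decreasing for a suitable increasing filtration by symmetric/tensor degree, hence is a small (indeed locally nilpotent) perturbation, so Proposition~\ref{prop:HomologicalPerturbation} applies at each stage. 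The Poincaré--Birkhoff--Witt isomorphism \eqref{Eq:PBW} is used to re-express the resulting model over $A$, identifying it with $\Sym_A\mathscr{M}[-1]$, and the perturbed structure maps come out as $\Pi=$``apply $p$ to each tensor slot and multiply'' and $I=$``multiply the $\colie^1$--entries and keep the module generators'', as in the statement.

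\emph{Main obstacle.} The crux is the last step of the free case: exhibiting a single filtration on $\comm(\colie(A[1])[-1]\oplus T^cA[1]\tensor\mathscr{M}[-1])$ for which $\delta$, $D_A$ and $D_{\mathscr{M}}$ all act as a strictly filtration--lowering perturbation of the assembled homotopy, so that the geometric series defining the perturbed data converge pointwise --- this is the same mechanism that renders $b\,h_{\mathrm{ab}}$ locally nilpotent in Theorem~\ref{Thm:Liecoalgcohomology}, now with a module present, and it has to be arranged in the right order of perturbations. A secondary, purely combinatorial, task is tracking the Koszul signs and the shifts $[1],[-1]$ through the PBW isomorphism and through $\iota\circ\rho$ so that $I$, $\Pi$ and $H$ emerge verbatim. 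The free--to--projective passage, by contrast, is formal once the naturality of the retracts has been recorded.
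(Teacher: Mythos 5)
Your overall strategy (iterated homological perturbation, splitting off the comodule summand, and invoking the projective basis) points in the right direction, but the central reduction step has a genuine gap. You propose to prove the statement for the free module $A\tensor W$ and then ``take the image of the idempotent induced by $\iota\circ\rho$''. The free-case contracting homotopy you invoke for the bar differential is basis-dependent: it is functorial in linear maps of the generating space $W$, but \emph{not} in arbitrary $A$-module endomorphisms of $A\tensor W$. The idempotent $e=\iota\circ\rho$ sends $a\tensor e_i$ to $\sum_j a\,e^j(e_i)\tensor e_j$ with coefficients in $A$ rather than in $\Bbbk$, and a direct check shows that it does not commute with the ``append a generator'' homotopy on $T^cA[1]\tensor (A\tensor W)$: the two composites place the factor $e^j(e_i)$ in different tensor slots. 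Consequently the homotopy does not preserve $e$-images, the induced map $p\,e\,i$ on the small side is only an idempotent up to homotopy, and restricting the free-case retract to the image of $e$ does not produce a deformation retract without further correction. The passage from free to projective is therefore not ``formal once naturality has been recorded''; it is exactly the point where the argument must do work. A second, related slip: for $\mathscr{M}=A$ the homotopy you describe as ``prepend $1\tensor\argument$'' is the standard extra degeneracy of the bar complex, which is \emph{not} left $(A\oplus\Bbbk)$-linear; without that linearity the homotopy cannot be transported through the identification $\cc{\Sym}_{A\oplus\Bbbk}((A\oplus\Bbbk)\tensor T^cA[1]\tensor\mathscr{M})\cong(A\oplus\Bbbk)\tensor\comm(T^cA[1]\tensor\mathscr{M})$, i.e.\ it does not extend to symmetric powers over $A$.

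The paper's proof sidesteps both issues by never passing through the free case: after perturbing the retract obtained from Proposition~\ref{prop:a6} (tensored with $T^cA[1]\tensor\mathscr{M}$) by $D_\mathscr{M}$, one is left with the two-sided bar-type differential $b$ on $(A\oplus\Bbbk)\tensor T^cA[1]\tensor\mathscr{M}$, which is contracted by the explicitly $(A\oplus\Bbbk)$-linear homotopy $\widetilde{h}_A((a+r)\tensor a_1\tensor\cdots\tensor a_k\tensor m)=(-1)^k(a+r)\tensor a_1\tensor\cdots\tensor a_k\tensor e^i(m)\tensor e_i$. This is where projectivity genuinely enters: the projective basis is what allows an $A$-linear (rather than merely $\Bbbk$-linear) contraction, which in turn is what makes the symmetric tensor trick over $A$ applicable and yields $\Sym_A\mathscr{M}[-1]$ with the stated maps $I$ and $\Pi$. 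If you want to keep your two-step outline, you would have to replace the ``image of the idempotent'' step by such an $A$-linear homotopy, at which point the detour through the free module becomes unnecessary.
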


\begin{proof}
The
    cohomology of the first summand is given by $A$, see the
    deformation complex \eqref{DefRetract:CommAlg}. The
    differential of the second summand is given by the sum of
    $\delta+D_A$ on the first tensor factor, on the the extension
    of $\D_\mathscr{M}$ as a degree $+1$ derivation and the
    symmetric extension of the coaction $\rho\colon T^cA[1]\tensor
    \mathscr{M} \to \colie(A[1])\tensor T^cA[1] \tensor\mathscr{M}
    $.  Let us summarize the latter two as $D_\mathscr{M}$. We
    consider the deformation retract
\begin{equation}
	\begin{tikzcd}
		(A\oplus\Bbbk)\tensor  \comm(T^cA[1]\tensor \mathscr M)\arrow[rr, shift left, "i"]  &  &
		(\Sym(\colie(A[1])[-1])\tensor  \cc{\Sym}(T^cA[1]\tensor \mathscr M[-1]) , \delta+D_A)\arrow[ll, 
		shift left, "P"] \arrow[out=240,in=300,loop,swap,  "H_A"],
	\end{tikzcd}
\end{equation}
which is just a tensor product of $T^cA[1]\tensor \mathscr{M}$
    with the deformation retract \eqref{DefRetract:UCommAlg}. We
    want to perturb this deformation retract now with
    $D_\mathscr{M}$.  Note that $H_A$ operates in the first tensor
    factor and $D_\mathscr M$ lowers the total tensor degree in
    the second factor by $1$ and thus $H_A\circ D_\mathscr{M}$ and
    $D_\mathscr{M}\circ H_A$ are locally nilpotent and we can
    apply the homological perturbation lemma. We therefore obtain
    the deformation retract
\begin{equation}
	\begin{tikzcd}
		(A\oplus\Bbbk)\tensor  \comm(T^cA[1]\tensor \mathscr M[-1]),
		b) \arrow[rr, shift left, "\widetilde{i}"]  &  &
		(\Sym(\colie(A[1])[-1])\tensor  \cc{\Sym}
		(T^cA[1]\tensor \mathscr M) , 
		\delta+D_A+D_\mathscr{M})\arrow[ll, 
		shift left, "\widetilde{P}"] 
		\arrow[out=240,in=300,loop,swap,  "\widetilde{H}_A"],
	\end{tikzcd}
\end{equation}
where the maps $\widetilde{i}$ and $\widetilde{P}$ as well as
    the perturbation of the differential are given by the usual
    formulas.  Again, by counting degrees tensor degrees, one can
    check that
\begin{align*}
	\widetilde{P}=P \ \text{ and } \ \widetilde{i}=i
\end{align*}
Let us take a closer look at the differential $b$, which is given by
\begin{align*}   
	b((a+r)\tensor a_1\tensor\cdots a_k \tensor m)=&
	(aa_1+ra_1)\tensor a_1\tensor\cdots a_k \tensor m\\&
	+\sum_{i=1}^{k}(-1)^{i}(a+r)\tensor a_1\tensor\dots 
	\tensor a_ia_{i+1}\tensor\dots \tensor a_k\tensor m\\&
	+(-1)^{k}(a+r)\tensor a_1\tensor\cdots a_k m)
\end{align*}
and its symmetric extension to $A\oplus\Bbbk)\tensor  
\comm(T^cA[1]\tensor \mathscr M$. Note that $b$ is $A\oplus\Bbbk$-
linear for the obvious action of $A\oplus\Bbbk$ on  
$(A\oplus\Bbbk)\tensor  \comm(T^cA[1]\tensor \mathscr M)$ and thus can be 
regarded as a differential on 
\begin{align*}
	\cc{\Sym}_{A\oplus\Bbbk}((A\oplus\Bbbk)\tensor  T^cA[1]\tensor \mathscr M)
	\cong 
	(A\oplus\Bbbk)\tensor  \comm(T^cA[1]\tensor \mathscr{M})
\end{align*}
Since $\mathscr{M}$ is projective we can find 
 a projective basis of $\mathscr{M}$ $\{e_{i}\}_{i\in I }$ 
together with a $A$-linear dual $\{e^i\}_{i\in I}$, i.e. every 
element $m\in \mathscr{M}$ can be written as $m=e^i(m)e_i$, where 
we use Einstein summation convention. Note that $e^i(m)$ 
is only non zero for a finite number of $i$'s. We define the 
operator 
\begin{align*}
	\widetilde{h}_A((a+r)\tensor a_1\tensor\dots\tensor a_k\tensor m)=
	(-1)^k(a+r)\tensor a_1\tensor\dots\tensor a_k\tensor e^i(m)\tensor e_i
\end{align*}
and the maps $\pi\colon (A\oplus\Bbbk)\tensor
\mathscr{M}\ni((a+r),m)\mapsto am+rm\in \mathscr{M}$ and $\iota\colon
\mathscr{M}\ni m \mapsto ((e^i(m),0),e_i)\in (A\oplus\Bbbk)\tensor
\mathscr{M}$. Note that $\pi$, $\iota$ and $h_A$ are left
$A\oplus\Bbbk$-linear. Moreover, we have that
\begin{center}
	\begin{tikzcd}
		\mathscr{M}[-1] 
		\arrow[rr, shift left, "\iota"]  &  &
		((A\oplus\Bbbk)\tensor  
		T^cA[1]\tensor \mathscr{M}[-1] , 
		b)\arrow[ll, 
		shift left, "\pi"] 
		\arrow[out=240,in=300,loop,swap,  "\widetilde{h}_A"],
	\end{tikzcd}
\end{center}
is a deformation retract and all the maps are $A\oplus\Bbbk$-
linear. Note that we have $\pi\circ \widetilde{h}_A=0$, but geberally
we have $\widetilde{h_A}\circ \iota\neq0\neq
\widetilde{h}_A^2$. Nevertheless, we can achieve that by modifiying
$\widetilde{h_A}$ to a new $A\oplus\Bbbk$-linear homotopy $h_A$ to
obtain a special deformation retract
\begin{center}
	\begin{tikzcd}
		\mathscr{M}[-1] 
		\arrow[rr, shift left, "\iota"]  &  &
		((A\oplus\Bbbk)\tensor  
		T^cA[1]\tensor \mathscr{M}[-1] , 
		b)\arrow[ll, 
		shift left, "\pi"] 
		\arrow[out=240,in=300,loop,swap,  "h_A"],
	\end{tikzcd}
\end{center}
Using the formula of the symmetric tensor trick (see \cite[Proposition 4.2]{kraft:2024}), we can obtain a special deformation retract
\begin{center}
	\begin{tikzcd}
		\cc{\Sym}_{A}\mathscr{M}[-1] 
		\arrow[rr, shift left, "\cc{\Sym}_{A}(\iota)"]  &  &
		(\cc{\Sym}_{A}((A\oplus\Bbbk)\tensor  
		T^cA[1]\tensor \mathscr{M}[-1] ), 
		b)\arrow[ll, 
		shift left, "\cc{\Sym}_{A}(\pi)"] 
		\arrow[out=240,in=300,loop,swap,  "L_A"],
	\end{tikzcd}
\end{center}
Note that we canonically have
\begin{align*}
	\cc{\Sym}_{A}((A\oplus\Bbbk)\tensor  
	T^cA[1]\tensor \mathscr{M}[-1]
	\cong 
	(A\oplus \Bbbk)\tensor \comm(T^cA[1]\tensor \mathscr{M}),
\end{align*}
so we have by concatenation 
\begin{center}
	\begin{tikzcd}
		A\oplus \cc{\Sym}_{A}\mathscr{M}[-1] 
		\arrow[rr, shift left, "I"]  &  &
		(\comm(\colie(A[1])\oplus T^cA[1]\tensor \mathscr{M}[-1]), \delta+D_A+D_\mathscr{M})\arrow[ll, 
		shift left, "\Pi"] 
		\arrow[out=240,in=300,loop,swap].
	\end{tikzcd}
\end{center}
The maps $\Pi$ and $I$ are canonically given as in the statement of the proposition. 
\end{proof}
\end{appendix}

{
	\footnotesize
	\renewcommand{\arraystretch}{0.5}

}

\end{document}